\numberwithin{equation}{section}
\newcommand{\R}{\mathbb{R}}
\newcommand{\limit}{\rightarrow}
\DeclareMathOperator{\dist}{dist}
\DeclareMathOperator{\capacity}{cap_2}
\DeclareMathOperator{\interior}{int}
\DeclareMathOperator{\osc}{osc}
\DeclareMathOperator{\Lip}{Lip}
\DeclareMathOperator{\supp}{supp}
\newtheorem{theorem}{Theorem}[section]
\newtheorem*{theorem*}{Theorem}
\newtheorem{lemma}[theorem]{Lemma}
\newtheorem{prop}[theorem]{Proposition}
\newtheorem*{Mtheorem}{Main Theorem}
\theoremstyle{definition}
\newtheorem{defn}{Definition}[section]
\newtheorem{obs}{Observation}
\theoremstyle{remark}
\newtheorem*{remark}{Remark}
\newcommand{\pO}{\partial\Omega}
\newcommand{\dt}{\widetilde\Delta}
\newcommand{\Carleson}{\mathscr{C}}
\newcommand{\Carl}[1]{|\nabla{#1}|^2\delta(X)}
\newcommand{\ve}{\varphi_{\epsilon}}
\DeclareMathOperator{\divg}{div}
\DeclareMathOperator{\diam}{diam}
\DeclareMathOperator{\IT}{IT}
\title{BMO Solvability and $A_{\infty}$ Condition of the Elliptic Measures in Uniform Domains}
\author{Zihui Zhao
}
\date{}
\newcommand{\Addresses}{{
  \bigskip
  \footnotesize
  \textsc{DEPARTMENT OF MATHEMATICS, UNIVERSITY OF WASHINGTON, BOX 354350, \newline SEATTLE, WA 98195-435.}\par\nopagebreak
  \textit{E-mail address}: \texttt{zhaozh@uw.edu}
}}
\begin{document}

\makeatletter{\renewcommand*{\@makefnmark}{}
\footnotetext{\textit{2000 Mathematics Subject Classification.} 35J25, 31B35, 42B37.} 
\footnotetext{\textit{Key words and phrases.} Harmonic measure, uniform domain, $A_{\infty}$ Muckenhoupt weight, BMO solvability, Carleson measure.}\makeatother}

\maketitle 



\begin{abstract}
	We consider the Dirichlet boundary value problem for divergence form elliptic operators with bounded measurable coefficients. We prove that for uniform domains with Ahlfors regular boundary, the BMO solvability of such problems is equivalent to a quantitative absolute continuity of the elliptic measure with respect to the surface measure, i.e. $\omega_L\in A_{\infty}(\sigma)$. This generalizes a previous result on Lipschitz domains by Dindos, Kenig and Pipher (see \cite{DKP}).
\end{abstract}

\bigskip

\section{Introduction}
Consider a bounded domain $\Omega\subset\mathbb{R}^n$ ($n\geq 3$). Let $L$ be an operator defined as $Lu=-\divg(A\nabla u)$, where $A(X) = \left( a_{ij}(X) \right)_{i,j=1}^n$ is a real $n\times n$ matrix on $\Omega$ that  is bounded measurable and uniformly elliptic: that is, there exists a constant $\lambda>0$ such that
\[ \lambda^{-1} |\xi|^2 \leq \langle A(X)\xi, \xi \rangle \leq \lambda |\xi|^2 \]
for all $\xi\in\mathbb{R}^n\setminus\{0\}$ and $X\in\Omega$. The matrix $A$ is not assumed to be symmetric.

We say $\Omega$ is regular, if for any continuous function $f\in C(\pO)$, the solution $u$ to the Dirichlet problem
\begin{equation}\tag{D}\label{ellp}
	\left\{\begin{array}{ll}
		Lu=0 & \text{on~} \Omega \\
		u=f & \text{on~} \pO
	\end{array}\right.
\end{equation}
is continuous in $\overline\Omega$. In particular $u=f$ on the boundary in the classical sense. It was proved in \cite{LSW} that $\Omega$ is regular for elliptic operator $L$ if and only if it is regular for the Laplacian. Assume $\Omega$ is regular, then by the Riesz representation theorem, for any $X\in\Omega$ there is a probability measure $\omega^X=\omega_L^X$ on $\pO$ such that 
\begin{equation}\label{representation}
	u(X)=\int_{\pO} f(Q) d\omega^X(Q)
\end{equation}
for any boundary value $f\in C(\pO)$ and its corresponding solution $u$. The measure $\omega^X$ is called the elliptic measure (harmonic measure if $L=-\Delta$) of $\Omega$ at $X$. From \eqref{representation}, we see that the information about the boundary behavior of solutions to the Dirichlet problem is encoded in the elliptic measure $\omega^X$. If $\Omega$ is connected, the elliptic measures $\omega^X$ and $\omega^Y$ at different points $X,Y\in\Omega$ are mutually absolute continuous. 
Thus for the problem we are concerned with, we just need to study the elliptic measure at a fixed point $X_0\in\Omega$. Denote $\omega_L=\omega_L^{X_0}$. For Lipschitz domains, by the work of \cite{C}, \cite{HW} and \cite{CFMS}, we know the elliptic measure $\omega_L$ and the solutions $u$ to \eqref{ellp} enjoy numerous properties, most notably \eqref{P1}-\eqref{P6} (see Section \ref{sect:prelim}). These pointwise estimates are important on their own, but they also serve as a toolkit to build global PDE estimates for the solutions to the Dirichlet problem. On the other hand, when the surface measure $\sigma$ is well defined on $\pO$, a natural question is: what is the relationship between the measures $\omega_L$ and $\sigma$? In fact, the answer to this measure-theoretic question is closely related to the boundary behavior of solutions to this PDE problem.

%
%

Let $\Omega$ be a regular domain whose surface measure $\sigma = \mathcal{H}^{n-1} |_{\pO}$ on the boundary is locally finite. Here $\mathcal{H}^{n-1} |_{\pO}$ denotes the $(n-1)$-dimensional Hausdorff measure restricted to $\pO$.
For $1<p<\infty$, we say the problem \eqref{ellp} is solvable in $L^p$ if there exists a universal constant $C$ such that for any continuous boundary value $f$ and its corresponding solution $u$,
\begin{equation}\label{Np}
	\|Nu\|_{L^p(\sigma)} \leq C\|f\|_{L^p(\sigma)}, 
\end{equation}  where $Nu(Q) = \max\{|u(X)|: X\in \Gamma(Q)\} $ is the non-tangential maximal function of $u$ (the definition of $\Gamma(Q)$ is specified in \eqref{def:NTC}). From \eqref{representation} we know $Nu(Q)$ is comparable to the Hardy-Littlewood maximal function $M_{\omega_L}f(Q)$ with respect to $\omega_L$. Provided that $\sigma$ is doubling, the theory of weights tells us
\[ \text{problem} \eqref{ellp} \text{~is~} L^p \text{~solvable, i.e. \eqref{Np} holds} \Longleftrightarrow \omega_L\in B_q(\sigma), \text{~where~} \frac{1}{p}+\frac{1}{q} =1.  \]
(See Section \ref{sect:prelim} for the definition of $B_q$ weights.)
For the Laplacian on Lipschitz domains, Dahlberg \cite{D} proved the harmonic measure $\omega\in B_2(\sigma)$; therefore \eqref{ellp} is solvable in $L^p$ for $2\leq p <\infty$.


When $p=\infty$, \eqref{Np} follows trivially from the maximal principle. However, if $L=-\Delta$ and $\Omega=\mathbb{R}_{+}^{n}$ is the upper half plane, the problem \eqref{ellp} is also solvable in the BMO space, that is, if $f\in BMO(\partial \mathbb{R}_{+}^{n})$, its harmonic extension $u$ has the property that $\mu=x_n|\nabla u|^2 dx$ is a Carleson measure on $\Omega$ (see \cite{FS}, and also Section 4.4 Theorem 3 of \cite{Stein}). In addition, the Carleson measure norm of $\mu$ is equivalent to the BMO norm of $f$. This BMO solvability also holds for Lipschitz domains, if $\mu$ is replaced by $\delta(x)|\nabla u|^2 dx$ and $\delta(x)=\dist(x,\pO)$ (see \cite{FaNe}).

Recall $A_{\infty}(\sigma)$ is a quantitative version of absolute continuity with respect to $\sigma$ (see Definition \ref{def:Ainfty}).  
By the work of Dahlberg, the elliptic measure $\omega_L \in A_{\infty}(\sigma)$ if $L$ is a \textquotedblleft small perturbation\textquotedblright of the Laplacian (see \cite{D2}). Later in \cite{FKP}, the smallness assumption was removed using harmonic analysis methods (see also \cite{F}). More precisely, in \cite{FKP} the authors showed that if $\omega_{L_0}\in A_{\infty}(\sigma)$ and $L_1$ is a perturbation of $L_0$, then $\omega_{L_1}\in A_{\infty}(\sigma)$. Also recall that $A_{\infty}(\sigma)= \cup_{q>1} B_q(\sigma)$, in other words,
\begin{align*}
	\omega_L\in A_{\infty}(\sigma) & \Longleftrightarrow \text{there exists~} q_0>1 \text{~such that~}\omega_L\in B_q(\sigma) \text{~for all~}1<q\leq q_0 \\
	& \Longleftrightarrow \text{problem} \eqref{ellp} \text{~is~}L^p \text{~solvable for all~} p\geq p_0, \text{~where~} \frac{1}{p_0}+\frac{1}{q_0}=1.
\end{align*}
Note that there is some ambiguity with $p_0$: the fact that \eqref{ellp} is not $L^{p_0}$ solvable does not necessarily imply $\omega_L\notin A_{\infty}(\sigma)$.
 A natural question arises: is there a solvability criterion that directly characterizes $\omega_L\in A_{\infty}(\sigma)$? In 2009, Dindos, Kenig and Pipher showed that for Lipschitz domains, the elliptic measure $\omega_L\in A_{\infty}(\sigma) $ if and only if the problem \eqref{ellp} is BMO-solvable, i.e. for any continuous function $f\in C(\pO)$, the Carleson measure of $\delta(X)|\nabla u|^2 dX$ is controlled by the BMO norm of $f$ (see \cite{DKP}).

The Dirichlet problem \eqref{ellp} has been studied on domains less regular than Lipschitz ones, in particular $BMO_1$ domains, non-tangential accessible (NTA) domains and uniform domains. These non-smooth domains arise naturally in free boundary problems and geometric analysis.
Roughly speaking, uniform domains are domains of which each ball centered at the boundary contains an interior ball comparable in size (interior corkscrew condition, see Definition \ref{def:ICC}) and that satisfy a notion of quantified connectivity (Harnack chain condition, see Definition \ref{def:HCC}). NTA domains are uniform domains whose exterior also satisfies the corkscrew condition. It is easy to see
\begin{equation*}
	\text{Lipschitz domains} \subsetneq BMO_1 \text{~domains} \subsetneq \text{NTA domains} \subsetneq \text{uniform domains}.
\end{equation*}

The notion of NTA domain was introduced by Jerison and Kenig in the pioneer work \cite{NTA}, where they proved the set of properties \eqref{P1}-\eqref{P6} hold for the Laplacian on NTA domains. In \cite{HACAD} the authors addressed the $L^p$ solvability of \eqref{ellp} on chord-arc domains, i.e. NTA domains with Ahlfors regular boundary (see Definition \ref{def:ADR}). On the other hand, by the independent work of \cite{A2} and \cite{HM}, certain properties, notably the boundary comparison principle \eqref{P6}, are shown to hold for the Laplacian on uniform domains (the domains studied in the latter paper also require Ahlfors regular boundary). In Section \ref{sect:BR} we show that all the properties \eqref{P1}-\eqref{P6} hold for uniform domains with Ahlfors regular boundary. Moreover, we prove an equivalent characterization of the absolute continuity of elliptic measures by PDE solvability condition:
\begin{Mtheorem}
	For uniform domains with Ahlfors regular boundary, the elliptic measure $\omega_L\in A_{\infty}(\sigma)$ if and only if the Dirichlet problem \eqref{ellp} is BMO solvable.	
\end{Mtheorem}
\noindent By the definition of Lipschitz domains (see Definition 2.2 of \cite{DKP}) and the coarea formula, one can show the boundary of a Lipschitz domain is Ahlfors regular, with constants depending on the (uniform) Lipschitz constant of the domain. Thus our theorem generalizes the result on Lipschitz domains in \cite{DKP}.

\bigskip

In \cite{AHMNT}, the authors characterized the absolute continuity of the harmonic measure from a geometric point of view. 
For a uniform domain with Ahlfors regular boundary, they showed
\begin{equation}
	\omega \in A_{\infty} \Longleftrightarrow \Omega \text{~is an NTA domain} \Longleftrightarrow \pO \text{~is uniform rectifiable}.\label{geochar}
\end{equation} 
We should point out that some of the implications in \eqref{geochar} had been proved earlier in \cite{DJ}, \cite{S} and \cite{HMU}. 

The complement of the four-corner Cantor set inside a (bounded) ball $B_R\setminus\mathcal{C}$ is an example of a uniform domain with Ahlfors regular boundary, whose boundary is  purely unrectifiable. Thus its harmonic measure $\omega\notin A_{\infty}(\sigma)$ by \eqref{geochar}.
%
%
Our study of general elliptic operators raises the following question: If $\Omega=B_R\setminus \mathcal{C}$, is there a uniformly elliptic operator $L=-\divg(A\nabla)$ such that $\omega_L\in A_{\infty}(\sigma)$? More generally, for a uniform domain with purely unrectifiable boundary, is there an operator $L$ such that $\omega_L\in A_{\infty}(\sigma)$? If so, what geometric information does this carry, and can we characterize the corresponding matrix $A$?

In this paper we assume quantified connectivity of the domain, in order to get a quantified characterization of $\omega_L$ with respect to $\sigma$, i.e. $\omega_L\in A_{\infty}(\sigma)$. Recent work by several authors has addressed the question of the relation between $\omega_L$ and $\sigma$ with no connectivity assumption on the domain. We refer interested readers to \cite{HMM}, \cite{AHMMMTV} and \cite{HL}. 
We would also like to bring the reader's attention to the following sufficient condition of $\omega_L\in A_{\infty}(\sigma)$ that is similar to ours. This is an improvement of the main result in \cite{KKiPT} to non-smooth domains:
\begin{theorem*}[\cite{HMT}]
	Let $\Omega \subset \R^n$ be a uniform domain with Ahlfors regular boundary. 
	Then the elliptic measure $\omega_L\in A_{\infty}(\sigma) $, if for any bounded Borel set $E\subset \pO$, the solution $u$ to problem \eqref{ellp} with characteristic boundary data $\chi_E$ satisfies that the Carleson measure of $\delta(X)|\nabla u|^2 dX$ is uniformly bounded, i.e. replace the right hand side of \eqref{MT:Carlest} by a uniform constant.
\end{theorem*}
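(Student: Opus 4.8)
The plan is to split the argument in two: first to upgrade the hypothesis---a uniform Carleson bound for solutions with \emph{characteristic} boundary data only---to the full Carleson measure estimate
\begin{equation*}
\sup_{\Delta}\frac{1}{\sigma(\Delta)}\iint_{B_{\Delta}\cap\Omega}\delta(X)\,|\nabla u|^2\,dX\ \lesssim\ \|u\|_{L^{\infty}(\Omega)}^2
\end{equation*}
valid for \emph{all} bounded solutions of $Lu=0$ in $\Omega$; and then to deduce $\omega_L\in A_{\infty}(\sigma)$ from this estimate, following the stopping-time scheme of Kenig--Kirchheim--Pipher--Toro \cite{KKiPT}, now carried out on uniform domains with Ahlfors regular boundary using the toolkit \eqref{P1}--\eqref{P6} developed in Section \ref{sect:BR}.

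\emph{Step 1 (layer cake).} Fix $f\in L^{\infty}(\pO)$ with $0\le f\le1$ and set $u(X)=\int_{\pO}f\,d\omega_L^X$. Writing $f=\int_0^1\chi_{E_t}\,dt$ with $E_t=\{f>t\}$ and $u_t(X)=\omega_L^X(E_t)$, one has $u=\int_0^1 u_t\,dt$, hence $\nabla u=\int_0^1\nabla u_t\,dt$ (justified by interior gradient estimates for solutions), and by the Cauchy--Schwarz inequality $|\nabla u(X)|^2\le\int_0^1|\nabla u_t(X)|^2\,dt$. Integrating against $\delta(X)\,dX$ over $B_{\Delta}\cap\Omega$, using Fubini, and applying the hypothesis to each Borel set $E_t$ yields $\iint_{B_{\Delta}\cap\Omega}\delta(X)|\nabla u|^2\,dX\le C_0\,\sigma(\Delta)$, where $C_0$ is the uniform constant replacing the right-hand side of \eqref{MT:Carlest}. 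Since $\nabla u$ is unaffected by adding a constant to $f$ and scales linearly under scaling of $f$, the displayed estimate follows for all bounded solutions. (One could also skip this reduction: the argument of Step 2 only ever tests the Carleson estimate against solutions of the form $\omega_L^X(E)$, so the hypothesis already suffices as stated.)

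\emph{Step 2 (Carleson measure estimate $\Rightarrow A_{\infty}$).} It is enough to prove the dyadic, quantitative form of the $A_{\infty}$ condition: there is $\beta\in(0,1)$ so that for every Christ--David cube $Q\subset\pO$ and every Borel $F\subseteq Q$, $\sigma(F)\le\beta\,\sigma(Q)$ implies $\omega_L^{X_Q}(F)\le\tfrac12\,\omega_L^{X_Q}(Q)$, where $X_Q$ is a corkscrew point for $Q$; since $\sigma$ is doubling (Ahlfors regularity) and $\omega_L$ is doubling (a consequence of \eqref{P1}--\eqref{P6}), this is equivalent to $\omega_L\in A_{\infty}(\sigma)$. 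Given such $Q$ and $F$, one puts $u(X)=\omega_L^X(F)$, so $0\le u\le1$ and $\omega_L^{X_Q}(Q)\approx1$ by the boundary estimates, and performs a stopping-time decomposition of $Q$ into maximal subcubes $\{Q_i\}$ on which $u(X_{Q_i})$ has fallen below a threshold comparable to $\omega_L^{X_Q}(F)$, forming the sawtooth subdomain $\Omega_{\mathcal F}$ sitting above the non-selected cubes. On $\Omega_{\mathcal F}$ interior Harnack keeps $u$ bounded below by the threshold; since $u$ has non-tangential limit $\chi_F$ at $\omega_L$-a.e.\ boundary point, the part of $\pO$ on $\partial\Omega_{\mathcal F}$, namely $Q\setminus\bigcup_i Q_i$, lies in $F$ up to an $\omega_L$-null set, whence $\omega_L^{X_Q}\!\big(\bigcup_i Q_i\big)\gtrsim1-\omega_L^{X_Q}(F)$. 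Then one uses Caccioppoli's inequality, Poincar\'e's inequality and the Harnack chain condition to bound the oscillation of $u$ along each Whitney corridor of $\Omega_{\mathcal F}$ by the local mass of $\delta(X)|\nabla u|^2\,dX$, feeds in the Carleson measure estimate from Step 1 over these corridors, and iterates over dyadic generations with a pigeonhole over the $\sim\log(1/\beta)$ scales separating $F$ from $Q$ to cap how far $u$ can descend; comparing with $\sigma(F)\le\beta\,\sigma(Q)$ through Ahlfors regularity forces $\omega_L^{X_Q}(F)\lesssim\beta^{\theta}$ for some $\theta>0$, so the implication holds once $\beta$ is small.

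\emph{Main obstacle.} The delicate point is the last part of Step 2: extracting \emph{quantitative} smallness of $\omega_L^{X_Q}(F)$ from a Carleson bound that is only uniformly \emph{bounded}, not small. This is exactly what forces the telescoping of $u$ down long chains of cubes and the pigeonhole over the many scales between $F$ and $Q$; it is also where the absence of an exterior corkscrew condition is felt, since the sawtooth and corkscrew constructions, the change-of-pole formula, the doubling of $\omega_L$ and the boundary comparison principle \eqref{P6} must all be run in the full generality of uniform domains with Ahlfors regular boundary---precisely the content of Section \ref{sect:BR}.
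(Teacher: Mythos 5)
Note first that the paper does not prove this theorem: it is quoted verbatim from the reference [HMT] (Hofmann--Martell--Toro, book in preparation) as a companion result ``similar to ours,'' with no argument supplied. So there is no proof in the paper against which to compare yours. What the paper does prove, in Section~\ref{sect:showAinfty}, is the strictly weaker implication in which the Carleson norm of $\delta(X)|\nabla u|^2\,dX$ is assumed to be bounded by $\|f\|_{BMO(\sigma)}^2$ for every continuous $f$. The argument there is of a completely different character: one plugs in the Coifman--Rochberg test function $f=\max\{0,1+\delta\log M_\sigma\chi_E\}$, whose BMO norm is $\lesssim\delta$ and hence can be made arbitrarily small; that smallness does all of the quantitative work. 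This scheme cannot prove the HMT statement, because the HMT hypothesis only gives a uniform (not small) Carleson bound for characteristic data. So you were right not to try to imitate Section~\ref{sect:showAinfty}, and to reach instead for a KKiPT-type argument.

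Your Step 1 is correct: the layer-cake representation $f=\int_0^1\chi_{E_t}\,dt$, interior gradient bounds to justify $\nabla u=\int_0^1\nabla u_t\,dt$, and Jensen in the $t$-variable cleanly upgrade the characteristic-function hypothesis to a Carleson bound for all $0\le f\le 1$; and you rightly flag it as dispensable for the intended argument.

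Step 2, however, is a roadmap rather than a proof. The ingredients you list --- Christ--David stopping time, sawtooth $\Omega_{\mathcal F}$, the observation that $Q\setminus\bigcup_i Q_i\subset F$ up to an $\omega_L$-null set by non-tangential convergence of $u=\omega_L^X(F)$ to $\chi_F$, and Caccioppoli/Poincar\'e/Harnack chains converting oscillation of $u$ along Whitney corridors into local mass of $\delta(X)|\nabla u|^2\,dX$ --- are the right ones, and they are indeed available in the uniform/Ahlfors-regular setting after Section~\ref{sect:BR}. But the single step that makes the scheme close, namely converting a Carleson measure that is merely \emph{bounded} (not small) into quantitative smallness of $\omega_L^{X_Q}(F)$ via ``iterating over dyadic generations with a pigeonhole over $\sim\log(1/\beta)$ scales,'' is left as a black box, and you acknowledge this yourself as the ``main obstacle.'' Without a precise statement and proof of that telescoping/pigeonhole lemma one cannot verify that the estimates actually compound; this is exactly where the bulk of the work in \cite{KKiPT} and \cite{HMT} lives. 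As written, the proposal identifies the correct route but does not carry it out, so it should not be regarded as a proof of the cited theorem.
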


\bigskip

The plan for this paper is as follows. We state the definitions and main theorem in Section \ref{sect:prelim}. In Section \ref{sect:BR} we establish some preliminary lemmas for the elliptic measure and solutions to problem \eqref{ellp} on uniform domains with Ahlfors regular boundary. In Section \ref{sect:showBMOs} we assume the elliptic measure is an $A_{\infty}$ weight with respect to the surface measure, and show the BMO solvability, i.e. the Carleson measure of $\delta(X)|\nabla u|^2 dX $ is bounded by the BMO norm of the boundary data. Section \ref{sect:showAinfty} is devoted to the proof of $\omega_L\in A_{\infty}(\sigma)$ under the assumption of the BMO solvability for \eqref{ellp}. In Section \ref{sect:converse} we show that if $\omega_L\in A_{\infty}(\sigma)$, the converse to the Carleson measure estimate holds.

\section{Definitions and statement of the main theorem}\label{sect:prelim}
Throughout this paper, we always assume $\Omega\subset\mathbb{R}^n$ is open and bounded, and $n\geq 3$.

\begin{defn}\label{def:ICC}
The domain $\Omega$ is said to satisfy the \textbf{interior corkscrew condition} (resp. exterior corkscrew condition) if  there are $M, R>0$ such that for any $Q\in\pO$, $r\in (0,R)$, there exists a corkscrew point (or non-tangential point) $A=A_r(Q) \in \Omega$ (resp. $A\in \Omega^c$) such that $|A-Q|<r$ and $\delta(A):= \dist(A,\partial\Omega) \geq r/M$.
\end{defn}
Define the non-tangential cone $\Gamma^{\alpha} (Q)$ at $Q\in\pO$ with aperture $\alpha>1$ as follows
\begin{equation}\label{def:NTC} \Gamma^{\alpha}(Q) = \{X\in\Omega: |X-Q|\leq \alpha \delta(X)\}, \end{equation}
and define the truncated cone $\Gamma^{\alpha}_r (Q) = \Gamma^{\alpha}(Q) \cap B(Q,r) $. We will omit the super-index $\alpha$ when there is no confusion.
The interior corkscrew condition in particular implies $\Gamma_r(Q)$ is nonempty as long as the aperture $\alpha \geq M$. The non-tangential maximal function is defined as $Nu(Q) = \sup\{ |u(X)|: X\in \Gamma(Q)\}$, 
and the square function is defined as $Su(Q) = \left(\iint_{\Gamma(Q)} |\nabla u(X)|^2 \delta(X)^{2-n} dX\right)^{1/2}$.
We also consider truncated square function $S_h u(Q)$, where the non-tangential cone $\Gamma(Q)$ is replaced by the truncated cone $\Gamma_h(Q)$.

\begin{defn}\label{def:HCC}
The domain $\Omega$ is said to satisfy the \textbf{Harnack chain condition} if there are universal constants $C>1$ and $C'>0$, such that for every pair of points $A$ and $A'$ in $\Omega$ satisfying
\begin{equation}\label{cond:Lambda}
	\Lambda:=\frac{|A-A'|}{\min\{\delta(A), \delta(A')\}} > 1, 
\end{equation} 
 there is a chain of open Harnack balls $B_1, B_2, \cdots, B_M$ in $\Omega$ that connects $A$ to $A'$. Namely, $A\in B_1$, $A'\in B_M$, $B_j\cap B_{j+1}\neq\emptyset$ and 
\begin{equation}\label{preHarnackball}
 	C^{-1}\diam(B_j) \leq \delta(B_j) \leq C\diam(B_j)
\end{equation}
     for all $j$.
    Moreover, the number of balls $ M\leq  C'\log_2 \Lambda$. 
\end{defn}

\begin{remark}
\begin{enumerate}
	\item If two points $A, A'\in\Omega$ do not satisfy \eqref{cond:Lambda}, we can simply take the balls $B(A, \delta(A)/2)$ and $B(A',\delta(A')/2)$ to connect them.
	\item We often want the Harnack balls to satisfy $\delta(B_j)> \diam(B_j)$ to be able to enlarge them. This is possible. In fact, if $\Omega$ satisfy the Harnack chain condition, then for any $C_1>1$, there is a constant $C_2$ such that $\Omega$ satisfies the above Harnack chain condition with the comparable size condition \eqref{preHarnackball} replaced by
	\begin{equation}
		C_1\diam(B_j) \leq \delta(B_j)\leq C_2\diam(B_j) \tag{HB} \label{Harnackball}.
	\end{equation} 
	The number of balls $M$ may increase, but is still of the order $\log_2 \Lambda$.
	Moreover, the ratio between $C_2$ and $C_1$ is fixed: $C_2/C_1 \approx C^2(C+1)^2$
	Balls satisfying the condition \eqref{Harnackball} are called \textbf{Harnack balls} with constants $(C_1,C_2)$.
\end{enumerate}	
\end{remark}

\begin{defn}
If $\Omega$ satisfies (1) the interior corkscrew condition and (2) the Harnack chain condition, then we say $\Omega$ is a \textbf{uniform domain}. If in addition, $\Omega$ satisfies the exterior corkscrew condition, we say it is an \textbf{NTA (non-tangential accessible) domain}.
\end{defn}

For any $Q\in\pO$ and $r>0$, let $\Delta=\Delta(Q,r)$ denote the surface ball $B_r(Q) \cap \pO$, and $T(\Delta)=B_r(Q)\cap\Omega$ denote the Carleson region above $\Delta$. We always assume $r< \diam \Omega$.

\begin{defn}\label{def:ADR}
We say that the boundary of $\Omega$ is \textbf{Ahlfors regular} if there exist constants $C_2>C_1>0$, such that for any $Q\in\pO$ and any radius $r>0$,
\[ C_1 r^{n-1} \leq \sigma(\Delta(Q, r)) \leq C_2 r^{n-1}, \]
where $\sigma = \mathcal{H}^{n-1}|_{\pO}$ is the surface measure.
\end{defn}
\begin{defn}\label{def:Ainfty}
Let $\mu$ and $\nu$ be two finite Borel measures on $\pO$. We say $\mu\in A_{\infty}(\nu)$ if for any $\epsilon>0$, there exists $\delta=\delta(\epsilon)>0$ such that for any surface ball $\Delta$ and any $E\subset \Delta$: $\mu(E)/\mu(\Delta) <\epsilon$ whenever $\nu(E)/\nu(\Delta) < \delta$
\end{defn}

The $A_{\infty}$ property is symmetric: $\mu\in A_{\infty}(\nu)$ if and only if $\nu\in A_{\infty}(\mu)$. 
If this holds, there is some $q>1$ such that the kernel function $k=d\mu/d\nu$ satisfies a reverse H\"older inequality: $\left(\fint_{\Delta} k^q d\nu\right)^{1/q} \leq C \left( \fint_{\Delta} k d\nu\right)$ for all surface balls $\Delta$. 
In other words, $A_{\infty}(d\nu) = \cup_{q>1} B_q(d\nu)$.

\begin{Mtheorem}\label{Thm:main}
	Let $\Omega\subset\mathbb{R}^n$ be a uniform domain with Ahlfors regular boundary and the operator $L=-\divg(A\nabla)$, where $A(X)$ is a real $n\times n$ matrix that is bounded and uniformly elliptic on $\Omega$. Then the elliptic measure $\omega_L\in A_{\infty}(\sigma)$ if and only if the problem \eqref{ellp} is BMO solvable, namely the following statement holds: there exists a constant $C>0$ such that for any continuous function $f\in C(\pO)$, if $Lu=0$ in $\Omega$ with $u=f$ on $\pO$, then $\delta(X)|\nabla u|^2 dX$ is a Carleson measure, and 
	\begin{equation}\label{MT:Carlest}
		\sup_{\Delta\subset\pO} \frac{1}{\sigma(\Delta)} \iint_{T(\Delta)} \Carl{u} dX \leq C \|f\|_{BMO(\sigma)}^2.
	\end{equation}
	Moreover, the reverse inequality to \eqref{MT:Carlest} also holds, that is, there exists a constant $C'>0$ such that
	\[ \|f\|_{BMO(\sigma)}^2 \leq C' \sup_{\Delta\subset\pO} \frac{1}{\sigma(\Delta)} \iint_{T(\Delta)} \Carl{u} dX . \]
\end{Mtheorem}
\noindent From here onwards, we write $\omega = \omega_L$ if there is no confusion as to which elliptic operator $L$ we are talking about.

Before we start the proof, we make the following observation: the Carleson measure norm of $\delta(X)|\nabla u|^2 dX$ is in some sense equivalent to the integral of the truncated square function.
Suppose $\Delta=\Delta(Q_0,r)$ is an arbitrary surface ball. For any $X\in T(\Delta)$, we define $\Delta^X=\{Q\in\pO: X\in \Gamma(Q)\}$. Let $Q_X\in\pO$ be a point such that $|X-Q_X|=\delta(X)$. Then 
\begin{equation}
	\Delta(Q_X, (\alpha-1)\delta(X))\subset \Delta^X\subset \Delta(Q_X, (\alpha+1)\delta(X)). \label{DeltaX}
\end{equation} 
Since $\pO$ is Ahlfors regular, \eqref{DeltaX} implies $\sigma(\Delta^X) \approx \delta(X)^{n-1}$. Thus
\begin{align}
 	\iint_{T(\Delta)} \Carl{u} dX & \approx \iint_{T(\Delta)} |\nabla u|^2 \delta(X)^{2-n} \sigma(\Delta^X ) dX \nonumber \\
 	& =\iint_{T(\Delta)} |\nabla u|^2 \delta(X)^{2-n} \int_{ \Delta^X} d\sigma(Q) dX. \label{eq:CarlesonSquarepre}
\end{align} 
Changing the order of integration, on one hand,
\begin{align}
  \iint_{T(\Delta)} |\nabla u|^2 \delta(X)^{2-n} \int_{\Delta^X} d\sigma(Q) dX & \leq \int_{|Q-Q_0|<(\alpha+1)r}\iint_{\Gamma_{\alpha r}(Q)} |\nabla u|^2 \delta(X)^{2-n} dXd\sigma \nonumber \\
  & \leq \int_{(\alpha+1)\Delta}S_{\alpha r}^2(u)d\sigma. \label{eq:CarlesonSquareU}
\end{align}
On the other hand,
\begin{align}
  	\iint_{T(\Delta)} |\nabla u|^2 \delta(X)^{2-n} \int_{ \Delta^X} d\sigma(Q) dX & \geq \int_{|Q-Q_0|<r/2}\iint_{\Gamma_{r/2}(Q)} |\nabla u|^2 \delta(X)^{2-n} dXd\sigma \nonumber \\
  & \geq \int_{\Delta/2}S_{r/2}^2(u)d\sigma,  \label{eq:CarlesonSquareL}
\end{align}
where $\Delta/2 = \Delta(Q_0,r/2)$.
Therefore for any $Q_0\in\pO$,
\begin{equation}
	\sup_{\substack{\Delta=\Delta(Q_0,s) \\ s>0}} \frac{1}{\sigma(\Delta)} \iint_{T(\Delta)} \Carl{u} dX \approx \sup_{\substack{\Delta=\Delta(Q_0,r) \\ r>0}} \frac{1}{\sigma(\Delta)} \int_{\Delta} S_{r}^2(u) d\sigma
\end{equation} 

\section{Preliminaries: boundary behavior of non-negative solutions and the elliptic measure}\label{sect:BR}
The boundary H\"older regularity of the solution to Dirichlet problem has been proved for NTA domains in \cite{NTA}. By tools from potential theory, we show the boundary H\"older regularity holds for uniform domains with Ahlfors regular boundary. The author is aware of Theorem \ref{ARCDC} and \ref{thm:osc} through discussions with Prof. Tatiana Toro, for which the author is grateful.
\begin{defn}
	A domain $\Omega$ is \textbf{Wiener regular}, if for any Lipschitz function $f\in\Lip(\pO)$, there exists a solution $u\in W^{1,2}(\Omega)\cap C(\overline\Omega)$ to $Lu=0$ with Dirichlet boundary data $f$.
\end{defn}

\begin{theorem}
	$\Omega$ is Wiener regular if and only if for any $Q\in\pO$,
	\begin{equation}\label{capacityinf}
		\int_0^{*} \dfrac{\capacity(B_r(Q) \cap \Omega^c)}{r^{n-2}} \frac{dr}{r} = +\infty.
	\end{equation}
	For any set $K$, the capacity is defined as follows:
	\begin{equation}
		\capacity(K) = \inf \bigg\{\int |\nabla \varphi|^2 dx: \varphi \in C_c^{\infty}(\mathbb{R}^n), K\subset \interior\{\varphi\geq 1\}\bigg\}.
	\end{equation}
\end{theorem}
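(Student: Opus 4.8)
The plan is to reduce the statement for general $L$ to the classical Wiener criterion for the Laplacian, and to do so via potential theory and capacitary estimates. First I would recall the characterization of Wiener regularity in terms of regular boundary points: a domain $\Omega$ is Wiener regular if and only if every $Q\in\pO$ is a regular point, meaning that for one (hence every) continuous boundary datum $f$, the Perron solution $u_f$ satisfies $\lim_{X\to Q} u_f(X) = f(Q)$. The key classical input is that, for the Laplacian, $Q$ is a regular point if and only if the Wiener series
\[
	\sum_{k\ge 1} \frac{\capacity\big(\overline{B}_{2^{-k}}(Q)\setminus\Omega\big)}{(2^{-k})^{n-2}} = +\infty,
\]
which is the discretized form of the integral in \eqref{capacityinf}; the equivalence of the series and the integral is a routine monotonicity/dyadic comparison since $r\mapsto \capacity(B_r(Q)\cap\Omega^c)$ is monotone and the kernel $r^{-(n-2)}\,dr/r$ is a dyadically summable weight. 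So the real content is: regularity at $Q$ for $L$ $\iff$ regularity at $Q$ for $-\Delta$.

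For the direction that matters most — transferring regularity from $-\Delta$ to $L$ — I would use a barrier/comparison argument together with the De Giorgi–Nash–Moser theory. Because $A$ is bounded measurable and uniformly elliptic, $L$-subsolutions and $L$-supersolutions satisfy the maximum principle and interior Harnack inequality, and the boundary behavior of $L$-harmonic functions is controlled by that of $\Delta$-harmonic ones through capacitary width estimates. Concretely, one shows that if $Q$ is regular for $-\Delta$ then there is an $L$-superharmonic barrier at $Q$: take the (known) $\Delta$-harmonic barrier, or more robustly build one directly from the capacitary potential of the complement in dyadic annuli, and verify using the ellipticity bounds on $A$ that the same function (or a comparable one) is an $L$-supersolution that tends to $0$ at $Q$ and is bounded below away from $Q$. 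The existence of a local barrier at $Q$ is equivalent to $L$-regularity at $Q$. The converse direction (regularity for $L$ implies the Wiener series diverges) follows symmetrically, since the whole argument is reversible — or one simply invokes the theorem of Littman–Stampacchia–Weinberger \cite{LSW} already cited in the introduction, which states precisely that $\Omega$ is regular for $L$ iff it is regular for $-\Delta$, so \eqref{capacityinf} being the Wiener criterion for $-\Delta$ transfers verbatim.

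The main obstacle I anticipate is the passage between the \emph{global} notion of Wiener regularity as defined here (existence of a solution in $W^{1,2}(\Omega)\cap C(\overline\Omega)$ for every Lipschitz datum) and the \emph{local, pointwise} Wiener criterion at each boundary point. One must check that the $W^{1,2}$ solution exists (this is standard by Lax–Milgram applied to the weak formulation for Lipschitz, hence $W^{1,2}$-extendable, data) and that its continuity up to $\overline\Omega$ is governed exactly by regularity of each boundary point in the potential-theoretic sense; this requires knowing that the energy (variational) solution agrees with the Perron solution, which again is classical for uniformly elliptic divergence-form operators. A secondary technical point is the dyadic-to-integral comparison for the capacity integral — harmless, but worth stating carefully because $\capacity(B_r(Q)\cap\Omega^c)$ need not be continuous in $r$. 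I would handle these in a short lemma and then cite \cite{LSW} for the clean equivalence, keeping the section self-contained but brief.
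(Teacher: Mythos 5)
The paper does not actually prove this theorem; it is stated without proof as a classical fact (the Wiener criterion for divergence-form operators), with the crucial reduction already cited in the Introduction: ``It was proved in \cite{LSW} that $\Omega$ is regular for elliptic operator $L$ if and only if it is regular for the Laplacian.'' Your sketch --- invoking \cite{LSW} to transfer $L$-regularity to $\Delta$-regularity, then applying the classical Wiener test for the Laplacian, together with the dyadic-to-integral comparison of the capacity series (using monotonicity of $r\mapsto\capacity(B_r(Q)\cap\Omega^c)$, not continuity) and the identification of the $W^{1,2}$ variational solution with the Perron solution --- is exactly the standard route and fills in precisely what the paper leaves implicit. The alternative barrier construction you mention (building an $L$-supersolution from the capacitary potentials in dyadic annuli and verifying the supersolution property under the ellipticity bounds on $A$) is also the approach of \cite{LSW} itself; either way the argument is sound, and the only points that genuinely require care are the two you already isolated: (i) the equivalence between the global definition of Wiener regularity via $W^{1,2}\cap C(\overline\Omega)$ solvability for Lipschitz data and pointwise regularity of every boundary point, which rests on Lax--Milgram plus the agreement of energy and Perron solutions for divergence-form operators, and (ii) the dyadic discretization, which is harmless because the kernel $r^{-(n-2)}\,dr/r$ is essentially constant over each dyadic band and the capacity is monotone in $r$.
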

The following condition has been explored extensively by Aikawa (the condition has been mentioned without name in the work of \cite{Ancona}). See \cite{A1} \cite{A2} \cite{A3} for example. 
\begin{defn}
A domain $\Omega$ is said to satisfy the \textbf{capacity density condition (CDC)} if there exist constants $C_0, R>0$ such that 
\begin{equation}
	\capacity(B_r(Q) \cap \Omega^c) \geq C_0 r^{n-2},\quad \text{for any } Q\in\pO \text{ and any } r\in (0,R).
\end{equation}	
\end{defn}
Clearly if the domain $\Omega$ satisfies the CDC, it satisfies \eqref{capacityinf}, thus $\Omega$ is Wiener regular. What is relevant in our case is the following:
\begin{theorem}\label{ARCDC}
	If the domain $\Omega$ has Ahlfors regular boundary, it satisfies the CDC. In particular, $\Omega$ is Wiener regular.
\end{theorem}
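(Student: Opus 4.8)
The plan is to show that Ahlfors regularity of $\pO$ forces a quantitative lower bound on the capacity of $B_r(Q) \cap \Omega^c$, which is exactly the CDC; Wiener regularity then follows since the CDC implies the Wiener criterion \eqref{capacityinf} as already noted. The key point is that $\Omega^c \supset \pO$, so $B_r(Q) \cap \Omega^c \supset \Delta(Q,r)$, and by Ahlfors regularity $\mathcal{H}^{n-1}(\Delta(Q,r)) \gtrsim r^{n-1}$. Thus it suffices to prove that a Borel set $K \subset B_r(Q)$ with $\mathcal{H}^{n-1}(K) \gtrsim r^{n-1}$ has $\capacity(K) \gtrsim r^{n-2}$, and by the natural scaling of the $2$-capacity (under $x \mapsto rx$ one has $\capacity(rK) = r^{n-2}\capacity(K)$) one reduces to the scale $r=1$.

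First I would record the scaling identity for $\capacity$ and reduce to proving: there is $c = c(n,C_1) > 0$ such that any compact $K \subset \overline{B_1(0)}$ with $\mathcal{H}^{n-1}(K) \geq c_0 > 0$ satisfies $\capacity(K) \geq c$. Second, I would invoke the standard relation between the variational $2$-capacity and Hausdorff content/measure: for a compact set $K$, $\capacity(K)$ is comparable (up to dimensional constants) to the Newtonian/Riesz capacity, and one has the well-known lower bound $\capacity(K) \gtrsim \big(\mathcal{H}^{n-1}_\infty(K)\big)$ — more precisely, any measure $\mu$ supported on $K$ with $\mu(B_s(x)) \leq s^{n-1}$ for all balls has finite energy $\iint |x-y|^{2-n} d\mu(x)d\mu(y) \lesssim \mu(K)$, which via the dual (energy) characterization of capacity gives $\capacity(K) \gtrsim \mu(K)^2 \big/ \text{(energy)} \gtrsim \mu(K)$. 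Taking $\mu = \mathcal{H}^{n-1}|_{\Delta(Q,r)}$, Ahlfors \emph{upper} regularity gives precisely the growth bound $\mu(B_s(x)) \lesssim s^{n-1}$ and Ahlfors \emph{lower} regularity gives $\mu(\Delta(Q,r)) \gtrsim r^{n-1}$; combining yields $\capacity(\Delta(Q,r)) \gtrsim r^{n-2}$, hence $\capacity(B_r(Q)\cap\Omega^c) \gtrsim r^{n-2}$, which is the CDC with $R = \diam\Omega$ (or any fixed radius).

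The main obstacle — really the only nontrivial input — is the energy estimate that bounds the Riesz $(n-2)$-energy of $\mathcal{H}^{n-1}|_{\Delta(Q,r)}$ by its total mass. This is where both directions of Ahlfors regularity enter and where one must be careful: the Riesz kernel $|x-y|^{2-n}$ integrated against a measure with $(n-1)$-dimensional growth is controlled because $\int_0^{2r} s^{2-n} \, d\big(s^{n-1}\big) = \int_0^{2r} (n-1) s^{2-n} s^{n-2}\, ds = \int_0^{2r}(n-1)\,ds \lesssim r$, i.e. the single-point potential is uniformly bounded by $\lesssim r \cdot r^{n-2}$, wait — by $\lesssim r^{n-1} \cdot$ (a constant depending on the normalization); integrating once more in $\mu$ gives energy $\lesssim r^{n-1}\mu(\Delta) \approx \mu(\Delta)^2$, and then the energy (dual) formulation of capacity, together with $\capacity$ being comparable to the Wiener/Riesz capacity on compact sets, closes the argument. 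I would cite a standard reference (e.g. Adams–Hedberg, or Heinonen–Kilpeläinen–Martio) for the comparability of the variational $2$-capacity with the energy capacity and for the scaling law, rather than reproving them. A clean alternative, if one prefers to avoid the energy formulation, is to use the known equivalence of the CDC with a \emph{measure} density condition for a closed set (due to Aikawa, or Björn–Björn), under which the implication ``Ahlfors regular $\Rightarrow$ CDC'' is essentially immediate; I would mention this as the conceptual reason but give the self-contained energy proof for completeness.
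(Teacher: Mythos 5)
Your argument is correct in outline and reaches the right conclusion, but it travels by a different road than the paper. The paper observes that $B_r(Q)\cap\Omega^c \supset \Delta(Q,r)$ and then quotes the inequality $\mathcal{H}^{n-1}_\infty(E) \leq Cr^{n/2}\capacity(E)^{1/2}$ from Evans--Gariepy, so that the whole burden shifts to proving that for an Ahlfors regular set the Hausdorff \emph{content} of a surface ball is comparable to its Hausdorff \emph{measure} (the upper bound on $\sigma$ controls arbitrary coverings, the lower bound pins the measure to $r^{n-1}$). You instead invoke the Frostman/energy duality for the variational $2$-capacity: the surface measure restricted to $\Delta(Q,r)$ has $(n-1)$-dimensional growth by upper Ahlfors regularity, hence its Riesz $(n-2)$-potential is bounded, and then the dual (energy or sup-potential) characterization gives the lower bound on capacity. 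Both are standard potential-theoretic facts; the paper's route is slightly more self-contained given that it only needs a single displayed inequality plus an elementary content-vs-measure comparison, while yours makes the role of the two halves of Ahlfors regularity (upper for the growth/energy bound, lower for the mass) more transparent.

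One concrete slip worth fixing: after the (correct) calculation $\int_0^{2r}(n-1)\,ds \lesssim r$, you assert the single-point potential is $\lesssim r\cdot r^{n-2}$, then hedge to $\lesssim r^{n-1}$, and then write the energy as $\lesssim r^{n-1}\mu(\Delta) \approx \mu(\Delta)^2$. The potential bound is simply $\lesssim r$ (not $r^{n-1}$), so the energy is $\lesssim r\,\mu(\Delta) \approx r^n$, which is \emph{not} comparable to $\mu(\Delta)^2 \approx r^{2n-2}$ at every scale. What you actually need, and what does work, is $\capacity \gtrsim \mu(\Delta)^2/\text{energy} \gtrsim r^{2n-2}/r^n = r^{n-2}$, or equivalently normalize $\tilde\mu = \mu/(Cr)$ to have potential $\leq 1$ and use $\capacity \gtrsim \tilde\mu(\Delta) \approx r^{n-2}$. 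Either way the exponent comes out right; just drop the claim that the energy is comparable to $\mu(\Delta)^2$. Also note the paper works with the lower bound $\capacity(B_r(Q)\cap\Omega^c)\geq\capacity(B_r(Q)\cap\pO)$ exactly as you do, so that reduction is shared.
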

\begin{proof}
	For any set $E$ contained in a ball of radius $r$, its Hausdorff content $\mathcal{H}_{\infty}^{n-1}(E)$ satisfies 
	\begin{equation}
		\mathcal{H}_{\infty}^{n-1}(E) \leq Cr^{n/2}\capacity(E)^{1/2},\label{eq:capHcontent}
	\end{equation} 
		where  
	\begin{equation}
		\mathcal{H}_{\infty}^{n-1}(E)= \inf \bigg\{\sum_{i} \left(\frac{\diam B_i}{2}\right)^{n-1}: E\subset \bigcup\limits_i B_i, B_i\text{'s are balls in~}\mathbb{R}^n\bigg\}. \label{def:Hcontent}
	\end{equation} 
	 For the proof see \cite{Evans} page 193-194. 
	 
	 We claim that for an Ahlfors regular boundary $\pO$,
	 \begin{equation}
	 	\mathcal{H}_{\infty}^{n-1}(B_r(Q)\cap\pO)\approx \mathcal{H}^{n-1}(B_r(Q)\cap\pO) \approx r^{n-1}. \label{eq:HHcontent}
	 \end{equation}	
	Let $E=B_r(Q) \cap \pO$. It is clear from the definition \eqref{def:Hcontent} that $\mathcal{H}_{\infty}^{n-1}(E) \leq \mathcal{H}^{n-1}(E)$. Let $\{B_i=B(x_i, r_i)\}$ be an arbitrary covering of $E$. 
	We may assume $x_i\in \pO$; if not, there is some $x'_i\in B_i \cap E$ and we may replace $B_i$ by $B'_i=B(x'_i,3r_i) $. 
	By the countable sub-additivity of $\mathcal{H}^{n-1}$ and Ahlfors regularity of $\pO$, we have
	\[ \mathcal{H}^{n-1}(E) \leq \sum_i \mathcal{H}^{n-1}(B_i\cap\pO) \lesssim \sum_i r_i^{n-1}. \]
	This is true for any covering $E$, hence is true for the infimum. By the definition \eqref{def:Hcontent}
	\[  \mathcal{H}^{n-1}(E) \lesssim \mathcal{H}_{\infty}^{n-1}(E). \] Therefore 
	\[  \mathcal{H}_{\infty}^{n-1}(B_r(Q)\cap\pO) \approx \mathcal{H}^{n-1}(B_r(Q)\cap\pO) \approx r^{n-1}.  \]

	Combining \eqref{eq:capHcontent} and \eqref{eq:HHcontent}, we get 
	\[ \capacity(B_r(Q)\cap\pO) \gtrsim r^{-n} \left(\mathcal{H}_{\infty}^{n-1}(B_r(Q)\cap\pO)\right)^2 \gtrsim r^{n-2}. \]
	Therefore $\capacity(B_r(Q)\cap\Omega^c) \geq \capacity(B_r(Q)\cap\pO) \gtrsim r^{n-2}$, and the domain $\Omega$ satisfies CDC.
\end{proof}

\begin{theorem}[Theorem 6.18 in \cite{Finnish}]\label{thm:osc}
	Consider an open and bounded set $D$. Suppose $u, f\in W^{1,2}(D)\cap C(\overline D)$ are functions satisfying
	\begin{equation*}
		\left\{\begin{array}{ll}
			Lu=0\text{~on~} D &  \\
			u=f \text{~on~}\partial D, & \text{i.e.~} u-f \in W_0^{1,2}(D).
		\end{array}\right.
	\end{equation*}
	Then for any $Q_0\in\partial D$, any $0<\rho< \diam D/2$ and $r\leq \rho$, we have
	\begin{equation}
		\underset{B(Q_0,r)\cap D}{\osc} u  \leq \underset{\overline{B(Q_0,2\rho)}\cap \partial D}{\osc} f + \underset{\partial D}{\osc} f \cdot \exp\left(-C \int_r^{\rho} \frac{\capacity(B_t(Q_0)\cap D^c)}{t^{n-2}} \frac{dt}{t}\right),
	\end{equation}
	where $C=C(n,\lambda)$ is a positive constant, and $\osc_E(g)$ is the oscillation of the function $g$ on a set $E$: $\osc_E(g) = \sup_{x,y\in E} |g(x)-g(y)|$.
\end{theorem}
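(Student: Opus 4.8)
The statement to establish is Theorem 6.18 of \cite{Finnish}, i.e.\ the quantitative oscillation estimate for solutions of $Lu=0$ on an arbitrary open bounded set $D$, in terms of the oscillation of the boundary data and the Wiener-type capacity integral $\int_r^\rho \capacity(B_t(Q_0)\cap D^c)\,t^{1-n}\,\frac{dt}{t}$. The plan is to combine a comparison-principle argument on dyadic annuli with the standard capacitary lower bound for the elliptic measure (the "$L$-harmonic measure of a thick set is bounded below" estimate), and then iterate over the dyadic scales between $r$ and $\rho$.

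First I would reduce to a one-step estimate. Fix $Q_0\in\partial D$ and a radius $t$ with $r\le t\le\rho$. Set $m(s)=\inf_{B(Q_0,s)\cap D}u$ and $M(s)=\sup_{B(Q_0,s)\cap D}u$, so $\osc_{B(Q_0,s)\cap D}u=M(s)-m(s)$. Working in the region $B(Q_0,2t)\cap D$, consider the two non-negative $L$-solutions $M(2t)-u$ and $u-m(2t)$. The key input is a capacitary estimate: there is a constant $c=c(n,\lambda)>0$ such that for $X\in B(Q_0,t)\cap D$,
\[
\omega_L^X\bigl(\partial D\setminus B(Q_0,2t)\bigr)\ \le\ 1-c\,\frac{\capacity\bigl(B_t(Q_0)\cap D^c\bigr)}{t^{n-2}}.
\]
This is the standard estimate that a uniformly elliptic operator "sees" a set of positive capacity on the boundary; it follows by comparing $u$ with a capacitary potential and using the maximum principle, together with Harnack's inequality on the part of $B(Q_0,t)\cap D$ where the potential is bounded below — essentially the argument in \cite{A1,A2} or in \cite[Ch.~6]{Finnish}. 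Using this with the two solutions above, and the fact that $M(2t)-u$ and $u-m(2t)$ have boundary data controlled by $\osc_{\overline{B(Q_0,2t)}\cap\partial D}f$ on $B(Q_0,2t)$ and by $\osc_{\partial D}f$ elsewhere, a two-sided application of the representation formula \eqref{representation} gives the one-step decay
\[
\osc_{B(Q_0,t)\cap D}u\ \le\ \osc_{\overline{B(Q_0,2t)}\cap\partial D}f\ +\ \Bigl(1-c\,\frac{\capacity(B_t(Q_0)\cap D^c)}{t^{n-2}}\Bigr)\,\osc_{\partial D}f .
\]
More precisely one runs this with $2t$ replaced by the outer radius at the previous stage, so that the "good" term telescopes.

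Next I would iterate over dyadic radii $t_j=2^{-j}\rho$ for $j=0,1,\dots,N$ with $2^{-N}\rho\approx r$. Iterating the one-step inequality, the boundary-oscillation contributions on the annuli $\overline{B(Q_0,2t_j)}\cap\partial D$ are all dominated by $\osc_{\overline{B(Q_0,2\rho)}\cap\partial D}f$ (monotonicity of $\osc$ under set inclusion, since $B(Q_0,2t_j)\subset B(Q_0,2\rho)$), which produces the first term on the right-hand side of the claimed inequality. The remaining contribution is $\osc_{\partial D}f$ times the product $\prod_{j} \bigl(1-c\,\capacity(B_{t_j}(Q_0)\cap D^c)\,t_j^{1-n}\cdot t_j^{-1}\cdot(\text{the }dt/t\text{ normalization})\bigr)$; bounding $1-x\le e^{-x}$ turns the product into $\exp\bigl(-c\sum_j \capacity(B_{t_j}(Q_0)\cap D^c) t_j^{2-n}\bigr)$, and a routine comparison of the dyadic sum with the integral $\int_r^\rho \capacity(B_t(Q_0)\cap D^c) t^{2-n}\frac{dt}{t}$ (using that $\capacity(B_t\cap D^c)/t^{n-2}$ does not oscillate wildly across a single dyadic scale, up to adjusting the constant $C$) yields the stated exponential factor. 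The restriction $\rho<\diam D/2$ ensures $B(Q_0,2\rho)\cap\partial D\neq\partial D$ can be nontrivially smaller than $\partial D$ and that the annuli stay inside the range where the capacity condition is used.

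The main obstacle is the one-step capacitary estimate on the elliptic measure: proving the lower bound $\omega_L^X(\partial D\cap B(Q_0,2t))\gtrsim \capacity(B_t(Q_0)\cap D^c)/t^{n-2}$ uniformly for $X\in B(Q_0,t)\cap D$, with no regularity assumed on $D$. This requires constructing a suitable barrier/sub-solution out of the capacitary potential of $B_t(Q_0)\cap D^c$, controlling it from below on a definite portion of $B(Q_0,t)\cap D$, and invoking Harnack's inequality — for a general bounded open $D$ one must be careful that Harnack chains are available inside $D$ near $Q_0$, which is where one genuinely uses that the set $D^c$ is capacitarily thick rather than, say, connectivity of $D$. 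Everything else — the telescoping of the boundary terms and the dyadic-sum-to-integral comparison — is bookkeeping. Since this is quoted directly as \cite[Theorem 6.18]{Finnish}, in the paper itself one would simply cite it; the above is how I would reconstruct the proof if needed.
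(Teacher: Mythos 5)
The paper does not prove this statement: Theorem~\ref{thm:osc} is quoted verbatim as Theorem~6.18 of \cite{Finnish} and used as a black box, so there is no ``paper's proof'' to compare against. Your attempt is therefore a reconstruction from scratch, and you correctly note this at the end.

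As a reconstruction it captures the right skeleton (capacitary barrier, dyadic iteration, $1-x\le e^{-x}$, dyadic-sum-to-integral comparison), but two points deserve sharper treatment. First, the theorem is stated for an arbitrary open bounded $D$ with no regularity hypothesis, so the representation formula~\eqref{representation} and the elliptic measure $\omega_L^X$ are not literally available; the argument must be phrased through comparison with capacitary potentials and sub/supersolutions, or by working with the Perron--Wiener upper solution, rather than invoking $\omega_L^X$ as a measure. Second, the one-step inequality you write --- $\osc_{B(Q_0,t)\cap D}u \le \osc_{\overline{B(Q_0,2t)}\cap\partial D}f + (1-c\,\capacity/t^{n-2})\osc_{\partial D}f$ --- does not iterate to give the theorem as stated: the factor $\osc_{\partial D}f$ in the second term is not replaced by the previous scale's quantity, so repeating it yields at most a single decay factor, and the naive telescoping accumulates a $\sum_j\prod_{i>j}(1-c_i)$ prefactor on the near-boundary term that need not be $\le 1$. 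The standard fix is to decouple the two contributions at the outset: split $u(X)-\inf_{\overline{B(Q_0,2\rho)}\cap\partial D}f$ into the part of the boundary integral over $\partial D\cap B(Q_0,2\rho)$ (bounded trivially by $\osc_{\overline{B(Q_0,2\rho)}\cap\partial D}f$) and the part over $\partial D\setminus B(Q_0,2\rho)$ (bounded by $\osc_{\partial D}f$ times the ``far'' harmonic measure), and then run the dyadic iteration on the single nonnegative solution $v=\omega^X(\partial D\setminus B(Q_0,2\rho))$, which vanishes on $\partial D\cap B(Q_0,2\rho)$ and satisfies $\sup_{B(Q_0,t)\cap D}v\le(1-c\,\capacity(B_t\cap D^c)/t^{n-2})\sup_{B(Q_0,2t)\cap D}v$; since $v\le1$, iterating and using $1-x\le e^{-x}$ gives exactly the exponential factor with constant~$1$ on the first term. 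With those two adjustments --- replacing the elliptic-measure formalism by barriers appropriate to non-regular $D$, and restructuring the iteration around $v$ rather than around $\osc u$ --- your sketch becomes correct and matches the structure of the proof in \cite{Finnish} (which is carried out for quasilinear $\mathcal A$-harmonic operators and hence is somewhat more technical, but the Wiener-modulus/oscillation-decay mechanism is the same).
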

This theorem, together with the CDC condition, implies the boundary H\"older regularity.
\begin{prop}[Boundary regularity]\label{prop:bHr}
	Let $\Omega$ be a domain satisfying CDC with constants $(C_0,R)$.
	For any $Q_0\in \pO$ and $0<\rho\leq\min\{\diam (\Omega)/4, 3R/4\}$, suppose $u\in W^{1,2}(B_{3\rho}(Q_0)\cap \Omega) \cap C(\overline{B_{3\rho}(Q_0)\cap \Omega})$ satisfies
	\begin{equation*}
		\left\{\begin{array}{ll}
			Lu = 0 & \text{on~} B_{3\rho}(Q_0)\cap \Omega \\
			u=0 & \text{on~} B_{3\rho}(Q_0)\cap \pO.
		\end{array}\right.
	\end{equation*}
	Then
	\begin{equation}\tag{P1}\label{P1}
		|u(X)| \leq 2\left( \frac{|X-Q_0|}{\rho}\right)^{\beta} \sup_{\overline{B(Q_0,3\rho)\cap\Omega}} |u|, \qquad\text{for any } X\in B_{\rho}(Q_0)\cap \Omega.
	\end{equation}
	Here $\beta = \beta(C_0,n, \lambda)$ is a constant in the interval $(0,1]$.
\end{prop}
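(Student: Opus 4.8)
The plan is to iterate the oscillation estimate from Theorem \ref{thm:osc} across dyadic annuli, using the CDC to make the exponential decay factor genuinely summable. First I would set $v = u$ restricted to the domain $D_\rho := B_{3\rho}(Q_0) \cap \Omega$, whose boundary splits as $\partial D_\rho = (B_{3\rho}(Q_0) \cap \partial\Omega) \cup (\partial B_{3\rho}(Q_0) \cap \overline\Omega)$; on the first piece $u$ vanishes, so the boundary data $f$ of $v$ on $\partial D_\rho$ is supported on a portion at distance $\gtrsim \rho$ from any point $Q \in B_\rho(Q_0) \cap \partial\Omega$ we care about. Since $u=0$ on $B_{3\rho}(Q_0)\cap\partial\Omega$, we also have $\osc_{\overline{B(Q,2t)}\cap\partial D_\rho} f = 0$ for all $t$ with $2t$ small enough that $B_{2t}(Q)$ misses $\partial B_{3\rho}(Q_0)$, in particular for $2t \le \rho$ (say). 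So applying Theorem \ref{thm:osc} with $D = D_\rho$, center $Q$, outer radius $\rho$ and inner radius $r = |X - Q_0|$-scale, the first term on the right drops out entirely and we are left with
\begin{equation*}
	\osc_{B(Q,r)\cap D_\rho} u \le \osc_{\partial D_\rho} f \cdot \exp\left(-C\int_r^{\rho} \frac{\capacity(B_t(Q)\cap \Omega^c)}{t^{n-2}}\frac{dt}{t}\right),
\end{equation*}
using $\capacity(B_t(Q)\cap D_\rho^c) \ge \capacity(B_t(Q)\cap \Omega^c)$ since $\Omega^c \subset D_\rho^c$.

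Next I would invoke the CDC: $\capacity(B_t(Q)\cap\Omega^c) \ge C_0 t^{n-2}$ for $t \in (0,R)$, so the integral in the exponent is bounded below by $C_0 \log(\rho/r)$ (as long as $\rho \le R$, which is arranged by the hypothesis $\rho \le 3R/4$, with the inner radius smaller still). Hence the exponential is at most $(r/\rho)^{CC_0} =: (r/\rho)^\beta$ with $\beta = \beta(C_0, n, \lambda) \in (0,1]$ — if the raw exponent exceeds $1$ we simply decrease $\beta$ to land in $(0,1]$, harmless since $r/\rho \le 1$. Now take $Q = Q_0$ itself and $r = |X - Q_0|$ for the target point $X \in B_\rho(Q_0)\cap\Omega$: then $X \in B(Q_0, r)\cap D_\rho$, and since $u(Q_0) = 0$ we get $|u(X)| = |u(X) - u(Q_0)| \le \osc_{B(Q_0,r)\cap D_\rho} u$. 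Finally $\osc_{\partial D_\rho} f \le 2\sup_{\overline{B(Q_0,3\rho)\cap\Omega}}|u|$ trivially (oscillation of a function is at most twice its sup norm, and $f$ is the restriction of $u$). Combining the three bounds gives exactly \eqref{P1}.

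The main obstacle — really the only subtlety — is the bookkeeping around which boundary piece of $D_\rho$ the oscillation of $f$ lives on, and making sure the radii in Theorem \ref{thm:osc} are chosen so that the $\osc_{\overline{B(Q_0,2\rho)}\cap\partial D_\rho} f$ term vanishes rather than contributing $\osc$ of $u$ over part of $\partial B_{3\rho}(Q_0)$. Concretely one wants $\overline{B(Q_0, 2\rho)} \cap \partial D_\rho \subset B_{3\rho}(Q_0) \cap \partial\Omega$, i.e. $\overline{B(Q_0,2\rho)}$ should not reach $\partial B_{3\rho}(Q_0)$; since $2\rho < 3\rho$ this holds, and on that set $f = u = 0$ so its oscillation is zero. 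One also needs $\rho < \diam(D_\rho)/2$ to legally apply the theorem, which follows from $\rho \le \diam(\Omega)/4$ after checking $\diam(D_\rho) \ge \diam(B_{3\rho}(Q_0)\cap\Omega)$ is comparable to $\rho$ (here the interior corkscrew/boundedness of $\Omega$ relative to $\rho$ enters). Once these radius inequalities are laid out cleanly, the rest is the direct substitution above; I do not anticipate any analytic difficulty beyond Theorem \ref{thm:osc} and Theorem \ref{ARCDC}, both of which are already in hand.
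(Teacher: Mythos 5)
Your proposal follows the paper's proof essentially verbatim: apply Theorem \ref{thm:osc} on $D=B_{3\rho}(Q_0)\cap\Omega$ with outer radius chosen so the near-boundary oscillation term lands entirely on $B_{3\rho}(Q_0)\cap\partial\Omega$ (hence vanishes), then feed the CDC lower bound $\capacity(B_t(Q_0)\cap\Omega^c)\gtrsim t^{n-2}$ into the exponential to get the power $(\,|X-Q_0|/\rho\,)^\beta$. One small nit: with $r=|X-Q_0|$ the point $X$ lies only on $\partial B(Q_0,r)$, not inside it, so you should either take $r$ slightly larger (the paper sets $3r/4=|X-Q_0|$ together with outer radius $4\rho/3$, which makes the logarithms cancel cleanly) or observe that continuity of $u$ on $\overline{B(Q_0,r)\cap D}$ lets you bound $|u(X)|$ by that oscillation anyway; also the constraint $\rho<\diam D/2$ is guaranteed simply by connectedness of $\Omega$ together with $\rho\leq\diam\Omega/4$, not by the corkscrew condition.
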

\begin{proof}
	Let $D= B_{3\rho}(Q_0)\cap\Omega$, we can apply Theorem \ref{thm:osc} to the function $u$ on $D$. For any $X\in B_{\rho}(Q_0)\cap \Omega$, let $r$ be such that $3r/4=|X-Q_0|<\rho$. Then
	\begin{align*}
		|u(X)|  \leq \underset{B(Q_0,r)\cap D}{\osc} u & \leq \underset{\overline{B\left(Q_0,8\rho/3\right)}\cap \partial D}{\osc} u + \underset{\partial D}{\osc} u \cdot \exp\left(-C \int_r^{4\rho/3} \frac{\capacity(B_t(Q_0)\cap \Omega^c)}{t^{n-2}} \frac{dt}{t}\right) \\
		& \leq 0 + \left( 2 \sup_{\partial B(Q_0,3\rho)\cap \Omega} |u| \right) \cdot \exp\left(-C \int_r^{4\rho/3} \frac{\capacity(B_t(Q_0)\cap \Omega^c)}{t^{n-2}} \frac{dt}{t}\right).
	\end{align*}
	Since $\Omega$ satisfies the CDC with constants $(C_0,R)$, and $\rho \leq 3R/4$, we get
	\begin{align*}
		\exp\left(-C \int_r^{4\rho/3} \frac{\capacity(B_t(Q_0)\cap \Omega^c)}{t^{n-2}} \frac{dt}{t}\right) \leq \exp\left(-CC_0 \int_r^{4\rho/3} \frac{dt}{t}\right) \leq \left(\frac{|X-Q_0|}{\rho}\right)^{\beta},
	\end{align*}
	where $\beta= \min\{CC_0, 1\}$. Therefore
	\[ |u(X)| \leq 2\left(\frac{|X-Q_0|}{\rho}\right)^{\beta}\sup_{\overline{B(Q_0,3\rho)\cap \Omega}} |u|. \]
\end{proof}


After we established the boundary regularity of non-negative solutions (Proposition \ref{prop:bHr}), we can prove many other properties of the solutions and the elliptic measure. We quote them as lemmas here. If the matrix $A$ is symmetric, the proof is the same as the case of NTA domains (see \cite{CBMS} or \cite{NTA}); if $A$ is non-symmetric, see \cite{KKPT} Theorem 1.11, (1.12), (1.13) and (1.14). 

\begin{lemma}\label{lm:lbomega}
	Let $\Omega$ be a uniform domain with Ahlfors regular boundary.Then there exists $M_0>0$ such that for any $Q_0\in \pO, s\in (0,R)$, we have 
	\begin{equation}\tag{P2}\label{P2}
		\omega^X(\Delta_s(Q_0)) \geq M_0, \text{~for any~} X\in B\left(A_s(Q_0), \frac{s}{2M}\right).
	\end{equation} 
	Here $M$ and $R$ are the constants mentioned in the definitions of the interior corkscrew condition and CDC, respectively.
\end{lemma}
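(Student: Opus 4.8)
\medskip\noindent\textit{Proof idea.} Set $v(X)=\omega^{X}(\Delta_{s}(Q_{0}))$. Since $\Delta_{s}(Q_{0})$ is a Borel subset of $\pO$, the function $v$ is a nonnegative $L$-solution in $\Omega$ with $0\le v\le 1$ (it is the Perron/resolutive solution of \eqref{ellp} with data $\chi_{\Delta_{s}(Q_{0})}$), and $1-v(X)=\omega^{X}(\pO\setminus\Delta_{s}(Q_{0}))$. The plan is to bound $v$ from below on $B(A_{s}(Q_{0}),\tfrac{s}{2M})$ by a constant depending only on $n,\lambda,M$, the CDC constant and the Harnack chain constants, in two steps: first use the boundary H\"older estimate of Proposition \ref{prop:bHr} to show $v\geq \tfrac12$ on a neighbourhood of $Q_{0}$ of size comparable to $s$ inside $\Omega$; then transport this lower bound to the corkscrew ball using the Harnack chain condition (Definition \ref{def:HCC}) and the interior Harnack inequality.

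For the first step, observe that $\ol{\Delta_{3s/4}(Q_{0})}$ and $\pO\setminus\Delta_{s}(Q_{0})$ are closed subsets of $\pO$ at mutual distance $\geq s/4$, so there is a Lipschitz $g\colon\pO\to[0,1]$ with $g\equiv 0$ on $\ol{\Delta_{3s/4}(Q_{0})}$ and $g\equiv 1$ on $\pO\setminus\Delta_{s}(Q_{0})$. Since $\Omega$ is Wiener regular (Theorem \ref{ARCDC}), the Dirichlet problem with data $g$ has a solution $w\in W^{1,2}(\Omega)\cap C(\ol\Omega)$; by the maximum principle $0\le w\le 1$, we have $w\equiv 0$ on $\ol{\Delta_{3s/4}(Q_{0})}$, and since $g\geq\chi_{\pO\setminus\Delta_{s}(Q_{0})}$ the representation formula \eqref{representation} gives $w(X)\geq 1-v(X)$. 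Applying Proposition \ref{prop:bHr} to $w$ on $B_{3\rho}(Q_{0})$ with $\rho=s/4$ (admissible since $s<R$ and $s<\diam\Omega$) yields, for $X\in B_{s/4}(Q_{0})\cap\Omega$,
\[
1-v(X)\;\le\;w(X)\;\le\;2\Big(\tfrac{4|X-Q_{0}|}{s}\Big)^{\beta}.
\]
Hence $v(X)\geq \tfrac12$ whenever $|X-Q_{0}|\le \rho_{0}:=\tfrac{s}{4}\,4^{-1/\beta}$; note $\rho_{0}=c_{\beta}s$ with $c_{\beta}\in(0,\tfrac1{16}]$ a constant depending only on $\beta=\beta(C_{0},n,\lambda)\in(0,1]$.

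For the second step, let $A'=A_{\rho_{0}/2}(Q_{0})$ be the corkscrew point at scale $\rho_{0}/2$; then $A'\in B_{\rho_{0}}(Q_{0})\cap\Omega$, $\delta(A')\geq\rho_{0}/2M$, so $v(A')\geq\tfrac12$. Given $X\in B(A_{s}(Q_{0}),\tfrac{s}{2M})$, one has $\delta(X)\geq\delta(A_{s}(Q_{0}))-\tfrac{s}{2M}\geq\tfrac{s}{2M}$ and $|X-A'|\leq|X-A_{s}(Q_{0})|+|A_{s}(Q_{0})-Q_{0}|+|Q_{0}-A'|<3s$, so
\[
\Lambda:=\frac{|X-A'|}{\min\{\delta(X),\delta(A')\}}\;\lesssim\;\frac{s}{\rho_{0}/2M}\;\approx\;M\,4^{1/\beta}=:\Lambda_{0},
\]
a constant depending only on $M,n,\lambda,C_{0}$. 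By the Harnack chain condition and the Remark following Definition \ref{def:HCC}, $A'$ and $X$ are joined inside $\Omega$ by a chain of at most $N_{1}\approx C'\log_{2}\Lambda_{0}$ Harnack balls whose doubles lie in $\Omega$; applying the De Giorgi--Nash--Moser Harnack inequality for nonnegative $L$-solutions (valid for bounded measurable, not necessarily symmetric, uniformly elliptic $A$) successively along the chain gives $v(X)\geq C_{H}^{-N_{1}}v(A')\geq C_{H}^{-N_{1}}/2$. Since $N_{1}$ and $C_{H}=C_{H}(n,\lambda)$ are universal, this proves \eqref{P2} with $M_{0}=C_{H}^{-N_{1}}/2$.

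The one point that genuinely requires care is that the Harnack-chain transport must be carried out on the \emph{lower} bound for $v=\omega^{X}(\Delta_{s}(Q_{0}))$, not on an upper bound for $\omega^{X}(\pO\setminus\Delta_{s}(Q_{0}))$: a chain of bounded length multiplies values by the fixed factor $C_{H}^{N_{1}}$, which is harmless for a positive lower bound but useless for an upper bound unless that bound is already smaller than $C_{H}^{-N_{1}}$ --- and forcing the near-$Q_{0}$ bound to be that small would shrink $\rho_{0}$ and thus enlarge $N_{1}$, which is circular. Everything else is routine; in particular, checking that $w$ satisfies the hypotheses of Proposition \ref{prop:bHr} is exactly where Wiener regularity (equivalently the CDC, Theorem \ref{ARCDC}) is used, both to solve the Dirichlet problem with the Lipschitz cut-off data and to ensure every boundary point is regular so that $w$ attains the boundary value $0$ continuously on $\ol{\Delta_{3s/4}(Q_{0})}$.
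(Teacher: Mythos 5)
Your proof is correct and follows the standard route the paper implicitly defers to (the paper only cites \cite{CBMS}, \cite{NTA} for symmetric $A$ and \cite{KKPT} for non-symmetric, without writing out an argument): apply the boundary H\"older decay of Proposition~\ref{prop:bHr} to a continuous Lipschitz barrier dominating $1-\omega^{X}(\Delta_{s}(Q_{0}))$ to obtain a lower bound for $\omega^{X}(\Delta_{s}(Q_{0}))$ near $Q_{0}$, then transport this lower bound to the corkscrew ball by a bounded-length Harnack chain. The details all check out, and your closing observation---that the Harnack chain must be run on the lower bound for $v=\omega^{X}(\Delta_{s}(Q_{0}))$ rather than on an upper bound for $1-v$---is exactly the right point to flag.
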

\begin{lemma}[Boundary Harnack principle]\label{lm:bHp}
	Let $u$ be a non-negative solution in $B_{4s}(Q_0)\cap \Omega$ with vanishing boundary data  on $\Delta_{4s}(Q_0)$. Then
	\begin{equation}\tag{P3}\label{P3}
		u(X) \leq Cu(A_{s}(Q_0)) \quad \text{~for any~}X\in B_s(Q_0)\cap \Omega.
	\end{equation} 
\end{lemma}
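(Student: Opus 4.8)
This is the Carleson estimate (a one‑function boundary Harnack inequality), and the plan is to derive it from the interior Harnack inequality, the Harnack chain condition, and the boundary Hölder decay \eqref{P1} — the last of these being available here because an Ahlfors regular boundary satisfies the CDC (Theorem \ref{ARCDC}). The quantitative nondegeneracy \eqref{P2} of the elliptic measure (equivalently, the CDC barrier) will supply the multiplicative gain near the boundary. After rescaling I would assume $s$ is small relative to $R$ and $\diam \Omega$ so that \eqref{P1} and the CDC apply freely, and note that the conclusion is equivalent to the bound $\M:=\sup_{B_s(Q_0)\cap\Omega}u\le C\, u(A_s(Q_0))$, where $\M<\infty$ since $u$ is continuous on the compact set $\overline{B_s(Q_0)\cap\Omega}$ (being continuous up to the part $\Delta_{4s}(Q_0)$ of the boundary, on which it vanishes).

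First I would dispose of the points far from $\pO$. Fix a small threshold $\theta\in(0,1)$, and let $Y\in\overline{B_s(Q_0)\cap\Omega}$ be a point where $\M$ is attained. If $Y\in\pO$ then $\M=0$ and there is nothing to prove, so assume $\M=u(Y)>0$. If $\delta(Y)\ge\theta s$, then $Y$ and the corkscrew point $A_s(Q_0)$ — which satisfies $\delta(A_s(Q_0))\ge s/M$ and $|A_s(Q_0)-Q_0|<s$ — are joined by a Harnack chain whose length is bounded in terms of $\theta$ and the uniform constants, and iterating the interior Harnack inequality along it gives $\M\le C_\theta\, u(A_s(Q_0))$, as desired. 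The crux is to rule out the remaining possibility $\delta(Y)<\theta s$; suppose it occurs. Working in the small domain $B_{2\delta(Y)}(Y)\cap\Omega$ — on whose boundary $u$ vanishes along the $\pO$‑portion, which for $\theta$ small is contained in $\Delta_{4s}(Q_0)$ — the CDC of $\Omega$ near the nearest boundary point $Q_Y$ yields, for the elliptic measure of this domain, a lower bound $\omega^{Y}\big(\overline{B_{2\delta(Y)}(Y)}\cap\pO\big)\ge c_0>0$ (the barrier estimate underlying \eqref{P2}), so that the complementary mass on its spherical boundary is at most $1-c_0$; the maximum principle then forces $\sup_{B_{2\delta(Y)}(Y)\cap\Omega}u\ge(1-c_0)^{-1}u(Y)=(1+c_0')u(Y)$ for some fixed $c_0'>0$. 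Iterating this step — at each stage passing to a point where $u$ has grown by the factor $1+c_0'$, at a distance comparable to the current value of $\delta$, and arranging, as in the classical argument of \cite{NTA}, that the iterates migrate towards $\pO$ and remain inside $\overline{B_{3s}(Q_0)\cap\Omega}$ — I would produce a sequence $(Y_j)$ with $u(Y_j)\ge(1+c_0')^j\M\to\infty$. But $u$ is bounded on the compact set $\overline{B_{3s}(Q_0)\cap\Omega}$ (being continuous up to $\Delta_{4s}(Q_0)$), a contradiction. Hence $\delta(Y)\ge\theta s$ necessarily, and $\M\le C_\theta\,u(A_s(Q_0))$, which is \eqref{P3}.

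The hard part is the bookkeeping in the iteration just described: one must extract a genuine multiplicative gain at each step while guaranteeing that the iterates neither drift away from $Q_0$ nor have their distance to $\pO$ grow uncontrolled, so that they remain in a region where $u$ is known to be bounded and to vanish on $\pO$. This is precisely what makes the Carleson estimate a theorem; in the symmetric case it is carried out in \cite{NTA} and \cite{CBMS}, the only new ingredient needed for uniform domains with Ahlfors regular boundary being the boundary Hölder regularity \eqref{P1}, which we established above via the CDC. For non‑symmetric $A$ the corresponding statement — together with \eqref{P2} and the remaining properties listed as lemmas — is proved in \cite{KKPT} (Theorem 1.11 and (1.12)--(1.14)).
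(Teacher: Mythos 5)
The paper does not include a proof of Lemma~\ref{lm:bHp}: having established the boundary H\"older decay \eqref{P1} via the CDC, it simply quotes \eqref{P2}--\eqref{P5} as following by the same arguments as in \cite{CBMS}, \cite{NTA} (symmetric $A$) and \cite{KKPT} (non-symmetric $A$). Your proposal sketches precisely that classical iteration and then defers the bookkeeping to those same references, so the overall approach matches the paper's.

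That said, the sketch as written has a gap at exactly the step you flag as the hard part, and it is worth naming what is missing. The barrier step gives only a constant multiplicative gain, $\sup_{B_{2\delta(Y)}(Y)\cap\Omega}u\geq(1+c_0')u(Y)$, with displacement $\lesssim\delta(Y_j)$ and merely the a priori bound $\delta(Y_{j+1})\leq 3\delta(Y_j)$. Nothing in that step forces $\delta(Y_j)$ to decrease; if it grows by a bounded factor, the iterates leave $B_{3s}(Q_0)$ after $O(\log(1/\theta))$ steps, and boundedness of $u$ on $\overline{B_{3s}(Q_0)\cap\Omega}$ alone yields no contradiction, since that bound is not a priori related to $u(A_s(Q_0))$. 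The ingredient that closes the argument in \cite{NTA} and \cite{CBMS} is the \emph{interior} estimate coming from Harnack chains, namely $u(X)\lesssim \big(s/\delta(X)\big)^{C}\,u(A_s(Q_0))$ for $X\in B_{3s}(Q_0)\cap\Omega$; applied to the iterates, the growth $u(Y_j)\geq(1+c_0')^jM$ then forces $\delta(Y_j)\lesssim s\,e^{-cj}$, so the total displacement is summable (and small once $M/u(A_s(Q_0))$ is large), the $Y_j$ converge to a point of $\Delta_{4s}(Q_0)$, and continuity of $u$ there (where it vanishes) gives the contradiction. It is this Harnack chain estimate --- not the barrier gain --- that makes the iterates ``migrate towards $\pO$''; if you want the sketch to stand on its own rather than as a gloss on the citation, this must be stated explicitly.
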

\begin{lemma}
	Suppose $X\in \Omega\setminus B_{2s}(Q_0)$, then
	\begin{equation}\tag{P4}\label{P4}
		\dfrac{\omega^X(\Delta_s(Q_0))}{s^{n-1}} \approx \dfrac{G(X,A_s(Q_0))}{s}. 
	\end{equation} 
\end{lemma}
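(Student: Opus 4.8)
This is the classical Caffarelli--Fabes--Mortola--Salsa estimate, proved for NTA domains in \cite{NTA} (and, for non-symmetric $A$, in \cite{KKPT}). The plan is to check that its proof uses only ingredients that are now available on uniform domains with Ahlfors regular boundary: the boundary H\"older estimate \eqref{P1}, the interior corkscrew and Harnack chain conditions, the lower bound \eqref{P2}, the boundary Harnack inequality \eqref{P3}, the standard pointwise bounds for the Green function ($G(X,Y)\lesssim |X-Y|^{2-n}$ always, and $G(X,Y)\approx |X-Y|^{2-n}$ when $|X-Y|\leq\tfrac12\delta(Y)$), and the maximum principle on Wiener regular domains. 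We prove the two inequalities $\omega^X(\Delta_s(Q_0))\gtrsim s^{n-2}G(X,A_s(Q_0))$ and $\omega^X(\Delta_s(Q_0))\lesssim s^{n-2}G(X,A_s(Q_0))$ separately.

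\emph{Lower bound.} Fix $Q_0\in\pO$, $s\in(0,R)$, write $A=A_s(Q_0)$, and set $c_0=\tfrac{1}{4M}$, so that $\overline{B(A,c_0 s)}\subset B(A,\tfrac{s}{2M})\subset B_s(Q_0)\subset B_{2s}(Q_0)$. By the standard pointwise bound, $s^{n-2}G(Y,A)\leq C_1$ for every $Y\in\Omega\setminus B(A,c_0 s)$, while \eqref{P2} gives $\omega^Y(\Delta_s(Q_0))\geq M_0$ on $\partial B(A,c_0 s)$. Hence on the boundary of the bounded Wiener regular domain $D:=\Omega\setminus\overline{B(A,c_0 s)}$, the $L$-solution $h:=\omega^{\cdot}(\Delta_s(Q_0))-\tfrac{M_0}{C_1}s^{n-2}G(\cdot,A)$ satisfies $h\geq 0$: on $\partial B(A,c_0 s)$ by the two displayed bounds, and on $\pO$ because $G(\cdot,A)$ vanishes there (CDC, Theorem~\ref{ARCDC}) while $\omega^{\cdot}(\Delta_s(Q_0))\geq 0$. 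Since $h$ is bounded, the maximum principle gives $h\geq 0$ in $D$, hence $\omega^X(\Delta_s(Q_0))\geq \tfrac{M_0}{C_1}s^{n-2}G(X,A)$ for all $X\in\Omega\setminus B_{2s}(Q_0)$.

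\emph{Upper bound.} Here one shows $s^{n-2}G(X,A_s(Q_0))\gtrsim\omega^X(\Delta_s(Q_0))$ for $X\in\Omega\setminus B_{2s}(Q_0)$. As functions of $X$, both sides are non-negative $L$-solutions in $\Omega\setminus\overline{B_{2s}(Q_0)}$ that vanish continuously on $\pO\setminus\overline{\Delta_s(Q_0)}$ --- for $\omega^{\cdot}(\Delta_s(Q_0))$ this is where \eqref{P1} enters, since every $Q'\in\pO$ with $|Q'-Q_0|>s$ has a boundary neighborhood disjoint from $\overline{\Delta_s(Q_0)}$. By the maximum principle on the Wiener regular domain $\Omega\setminus\overline{B_{2s}(Q_0)}$, it therefore suffices to establish the inequality on the sphere $\Sigma:=\partial B_{2s}(Q_0)\cap\Omega$. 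For $Y\in\Sigma$ with $\delta(Y)\geq\eta s$ (a fixed small $\eta$) one connects $Y$ to the corkscrew point $A_{2s}(Q_0)$ by a Harnack chain of bounded length (uniform domain) and invokes $G(A_{2s}(Q_0),A_s(Q_0))\approx s^{2-n}$ (standard interior estimates together with a bounded Harnack chain between these two points, both at distance $\approx s$ from $\pO$); combined with $\omega^Y(\Delta_s(Q_0))\leq 1$ this yields the claim on this part of $\Sigma$. For $Y\in\Sigma$ with $\delta(Y)<\eta s$, $Y$ lies near a point $Q_Y\in\pO$ with $|Q_Y-Q_0|\approx 2s$, near which both functions vanish on $\pO$; one then applies \eqref{P3} together with the geometric lemma that the harmonic measure (from the complementary domain $\Omega\setminus\overline{B_{3s/2}(Q_0)}$) of $\partial B_{3s/2}(Q_0)\cap\Omega$ is controlled by $\omega^{\cdot}(\Delta_{2s}(Q_0))$ --- itself a consequence of the corkscrew condition (equivalently the CDC) via the maximum principle and \eqref{P1} --- to reduce, through a Harnack-chain iteration, to the deep case above. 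This may cost a harmless dilation of the surface ball; one either absorbs it once the doubling property of $\omega$ is in hand, or simply records the estimate with comparable (rather than equal) radii, which is all that is used downstream. When $A$ is not symmetric, $G(\cdot,A_s(Q_0))$ is replaced by the adjoint Green function $G^{*}(A_s(Q_0),\cdot)$ and one argues as in \cite{KKPT}.

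\emph{Main obstacle.} The delicate point is the upper bound for $Y\in\Sigma$ close to $\pO$: \eqref{P3} is only one-sided, so upgrading it to the two-sided comparison needed on $\Sigma$ requires the covering/iteration scheme of \cite{NTA}, and this is exactly where the Harnack chain condition and the full strength of the boundary regularity \eqref{P1} on uniform domains with Ahlfors regular boundary (equivalently the CDC, Theorem~\ref{ARCDC}) get used; the lower bound, by contrast, is immediate from \eqref{P2} and the maximum principle.
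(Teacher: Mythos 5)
The paper does not actually prove \eqref{P4}: immediately before the block of lemmas \eqref{P2}--\eqref{P6} it states that, once the boundary H\"older regularity \eqref{P1} and the geometric localization theorem are in place, ``the proof is the same as the case of NTA domains (see [CBMS] or [NTA]); if $A$ is non-symmetric, see [KKPT].'' So you are attempting a self-contained sketch where the paper offers a citation. Your lower bound is complete and matches the standard barrier argument: the combination of \eqref{P2}, the pointwise Green function bound $G(Y,A)\lesssim |Y-A|^{2-n}$, and the maximum principle on $\Omega\setminus\overline{B(A,c_0 s)}$ is exactly right.

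The upper bound, however, has a genuine gap in the shallow part of $\Sigma$, and as written it does not close. You reduce by the maximum principle to proving $\omega^Y(\Delta_s(Q_0))\lesssim s^{n-2}G(Y,A_s(Q_0))$ pointwise on $\Sigma=\partial B_{2s}(Q_0)\cap\Omega$, and this is fine for $Y$ with $\delta(Y)\gtrsim s$. But for $Y\in\Sigma$ with $\delta(Y)\ll s$ both quantities tend to zero as $Y\to Q_Y\in\pO$, and what is needed is a comparison of the \emph{rates} at which they vanish. The Carleson estimate \eqref{P3} is one-sided: applied to $\omega^{\cdot}(\Delta_s(Q_0))$ near $Q_Y$ it gives the useful bound $\omega^Y(\Delta_s)\leq C\,\omega^{A'}(\Delta_s)$ for a nearby corkscrew point $A'$, but applied to $G(\cdot,A_s(Q_0))$ it gives $G(Y,A_s)\leq C\,G(A',A_s)$, which is the \emph{wrong} direction --- you would need a lower bound on $G(Y,A_s)$ in terms of $G(A',A_s)$. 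The two-sided comparison that would save the argument is exactly the boundary comparison principle \eqref{P6}, whose proof (as in [NTA, Lemma 4.10]) uses \eqref{P4}, so invoking it here would be circular. The ``geometric lemma'' and the ``Harnack-chain iteration to reduce to the deep case'' you gesture at are not laid out concretely enough to see how they avoid this directionality problem, and a Harnack chain going \emph{toward} the boundary only gives a decay rate $u(Y)\gtrsim (\delta(Y)/s)^{\gamma}u(A')$ with an exponent $\gamma$ that has no reason to match the H\"older exponent $\beta$ from \eqref{P1}. The standard proof (CFMS; [NTA, Lemma 4.8]; [CBMS, Lemma 1.3.5]) sidesteps the pointwise comparison on $\Sigma$ altogether: one writes $\omega^X(\Delta_{s/2})\leq\int\theta\,d\omega^X$ for a smooth cutoff $\theta$ adapted to $\Delta_s$, expresses the right side via the Green representation as $-\int_\Omega A\nabla\theta\cdot\nabla_Y G(X,Y)\,dY$ (valid since $X\notin\operatorname{supp}\theta$), applies Cauchy--Schwarz and Caccioppoli to get $\omega^X(\Delta_{s/2})\lesssim s^{n-2}\sup_{B_{2s}(Q_0)\cap\Omega}G(X,\cdot)$, and then uses a single application of \eqref{P3} to $G(X,\cdot)$ to convert the supremum to $G(X,A_s(Q_0))$. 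You should either adopt that route or supply the missing details of the iteration you allude to; as it stands, the shallow case of the upper bound is not established.
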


\begin{lemma}[Doubling measure]
There is a constant $C>1$ such that, for any surface ball $\Delta=\Delta(Q_0,r)$, let $2\Delta=\Delta(Q_0,2r)$ be its doubling surface ball, we have
\begin{equation}
	\omega(2\Delta) \leq C\omega(\Delta). \tag{P5}\label{P5}
\end{equation} 
\end{lemma}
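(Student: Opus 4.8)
The plan is to derive the doubling inequality \eqref{P5} from the nondegeneracy estimate \eqref{P2}, the CFMS-type comparison \eqref{P4}, the Harnack chain condition, and the interior Harnack inequality, after separating off a degenerate range of scales. Write $\omega=\omega^{X_0}$ with $X_0$ the fixed pole, $\delta(X_0)>0$ fixed, and let $\Delta=\Delta(Q_0,r)$, $2\Delta=\Delta(Q_0,2r)$; fix a large constant $\Lambda_0$ (depending only on $M$, $n$ and the Harnack chain constants), to be pinned down below.

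\emph{Large-scale case: $r\ge R/2$ or $|X_0-Q_0|\le\Lambda_0 r$.} Here $r\ge r_0:=\min\{R/2,\delta(X_0)/\Lambda_0\}>0$. Put $\rho=\min\{r,R/2\}\in[r_0,R/2]$ and let $A_\rho(Q_0)$ be an interior corkscrew point at $Q_0$ of radius $\rho$. Since $\Delta_\rho(Q_0)\subset\Delta$, estimate \eqref{P2} gives $\omega^{A_\rho(Q_0)}(\Delta)\ge M_0$. The pair $A_\rho(Q_0), X_0$ satisfies $|A_\rho(Q_0)-X_0|\le\diam\Omega$ and $\min\{\delta(A_\rho(Q_0)),\delta(X_0)\}\ge\min\{\rho/M,\delta(X_0)\}\ge\min\{r_0/M,\delta(X_0)\}$, so the ratio $\Lambda$ of \eqref{cond:Lambda} for this pair is bounded by a constant $N_0=N_0(\diam\Omega,\delta(X_0),R,M)$; hence they are joined by a Harnack chain of uniformly boundedly many Harnack balls, and applying the interior Harnack inequality for $L$ to the positive $L$-solution $X\mapsto\omega^X(\Delta)$ along the chain yields $\omega^{X_0}(\Delta)\ge c_0>0$. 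As $\omega^{X_0}$ is a probability measure, $\omega^{X_0}(2\Delta)\le 1\le c_0^{-1}\,\omega^{X_0}(\Delta)$, which is \eqref{P5} in this case.

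\emph{Main case: $r<R/2$ and $|X_0-Q_0|>\Lambda_0 r$.} Requiring $\Lambda_0\ge 4$ we have $X_0\notin B_{4r}(Q_0)$, so \eqref{P4} applies at the scales $s=r$ and $s=2r$ (both $<R$) and gives $\omega^{X_0}(\Delta)\approx r^{\,n-2}G(X_0,A_r(Q_0))$ and $\omega^{X_0}(2\Delta)\approx r^{\,n-2}G(X_0,A_{2r}(Q_0))$, with implicit constants independent of $Q_0,r$. Thus it suffices to prove $G(X_0,A_{2r}(Q_0))\lesssim G(X_0,A_r(Q_0))$. Now $Y\mapsto G(X_0,Y)$ is the Green function of $\Omega$ at $X_0$ for the adjoint operator $L^{*}=-\divg(A^{T}\nabla)$, which is again bounded and uniformly elliptic with the same constant $\lambda$; hence $Y\mapsto G(X_0,Y)$ is a nonnegative $L^{*}$-solution in $\Omega\setminus\{X_0\}$ vanishing on $\pO$, and the interior Harnack inequality for $L^{*}$ applies to it away from $X_0$. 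The corkscrew points satisfy $|A_r(Q_0)-A_{2r}(Q_0)|\le 3r$ and $\min\{\delta(A_r(Q_0)),\delta(A_{2r}(Q_0))\}\ge r/M$, so the ratio $\Lambda$ of \eqref{cond:Lambda} is $\le 3M$, the Harnack chain joining them has a uniformly bounded number of Harnack balls, and a standard estimate (using $\delta(x)\le|x-Q_0|$ together with the size condition on Harnack balls) confines the chain to a fixed dilate $B_{C_{*}r}(Q_0)$; choosing $\Lambda_0\ge\max\{4,C_{*}\}$ makes this dilate avoid $X_0$. Iterating the interior Harnack inequality for $L^{*}$ along the chain gives $G(X_0,A_{2r}(Q_0))\approx G(X_0,A_r(Q_0))$, and combining with the two displays above yields \eqref{P5}.

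The only delicate point is the bookkeeping: $\Lambda_0$ must be chosen large enough that, in the main case, \eqref{P4} is applicable at both scales \emph{and} the Harnack chain joining $A_r(Q_0)$ to $A_{2r}(Q_0)$ stays away from the fixed pole $X_0$; and one must remember that the comparison of the two Green-function values is really a statement about the adjoint operator, which is legitimate because $A^{T}$ inherits the ellipticity bounds of $A$, so that $\omega_{L^{*}}$ and $G_{L^{*}}$ satisfy the same estimates \eqref{P1}--\eqref{P4}. The remaining ingredients — corkscrew selection, bounded Harnack chains with the collar property \eqref{Harnackball}, the trivial bound $\omega^{X_0}(2\Delta)\le 1$, and the interior Harnack inequality — are standard.
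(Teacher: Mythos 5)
Your proof is correct and follows the standard CFMS/Jerison--Kenig argument that the paper defers to via its citations to \cite{CBMS}, \cite{NTA} and \cite{KKPT}: apply \eqref{P4} at scales $r$ and $2r$, use Harnack along a bounded chain between the two corkscrew points $A_r(Q_0)$ and $A_{2r}(Q_0)$, and handle the degenerate range of scales (near the pole or near $\diam\Omega$) with \eqref{P2} together with the trivial bound $\omega\leq 1$. You also correctly flag the one real subtlety in the non-symmetric case --- that $Y\mapsto G(X_0,Y)$ is an $L^{*}$-solution rather than an $L$-solution, so the Harnack inequality must be invoked for the adjoint operator $L^{*}=-\divg(A^{T}\nabla)$ --- which is precisely why the paper cites \cite{KKPT} separately when $A$ is non-symmetric.
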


In \cite{NTA}, the authors proved the boundary comparison principle \eqref{P6} on NTA domains  using the maximal principle, the doubling of harmonic measure and P. Jones' geometric localization theorem (\cite{Jones}). The geometric localization theorem says the following: if $\Omega$ is an NTA domain, there are constants $C>1$ and $R>0$ such that for any $Q_0\in\pO$ and $r<R$, we can find a domain $\Omega_{Q_0,r}$ such that
\[ B(Q_0,r)\cap\Omega \subset \Omega_{Q_0,r} \subset B(Q_0,Cr)\cap\Omega, \]
and moreover $\Omega_{Q_0,r}$ is an NTA domain, whose NTA constants only depend on the NTA constants of $\Omega$. To prove the boundary comparison principle in our setting, and later to prove Theorem \ref{thm:BMOGreen}, we need a similar geometric localization theorem  for uniform domains with Ahlfors regular boundary. Jones' construction works for uniform domains, but does not have good property on the boundary. To take full advantage of Ahlfors regularity in our setting, we use the Carleson box construction in \cite{HM}.

\begin{theorem}[Geometric localization theorem]\label{thm:GLT}
 Let $\Omega$ be a uniform domain with Ahlfors regular boundary. There are constants $C>1$ and $R>0$ such that for any $Q_0\in \pO$ and $r<R$, there exists a domain $\Omega_{Q_0,r}$ such that
	\[ B(Q_0,r)\cap\Omega \subset \Omega_{Q_0,r} \subset B(Q_0,Cr)\cap\Omega. \]
And moreover, $\Omega_{Q_0,r}$ is a uniform domain with Ahlfors regular boundary, where the constants only depend on the corresponding constants for $\Omega$.
\end{theorem}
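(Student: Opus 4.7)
The plan is to adapt the Carleson box construction of Hofmann--Martell \cite{HM}. Since $\pO$ is Ahlfors regular, it is a space of homogeneous type, so by the Christ--David construction there is a dyadic system $\mathbb{D}=\bigcup_k \mathbb{D}_k$ on $\pO$ with cubes of diameter $\ell(Q)\sim 2^{-k}$ satisfying the usual nesting and separation properties. Given $Q_0\in\pO$ and $r$ sufficiently small, I would fix a dyadic cube $Q\in\mathbb{D}$ of diameter comparable to $r$ whose fixed dilate contains $\Delta(Q_0,r)$. Next, take a Whitney decomposition $\mathcal{W}$ of $\Omega$, and for each sub-cube $Q'\subseteq Q$ associate the family $\mathcal{W}_{Q'}$ of Whitney cubes $I\in\mathcal{W}$ with $\ell(I)\sim \ell(Q')$ and $\dist(I,Q')\lesssim \ell(Q')$; by the interior corkscrew condition and Ahlfors regularity, this family is nonempty and has uniformly bounded cardinality at each scale. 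Slightly fatten each $I$ to $I^{*}$ and set
\[
\Omega_{Q_0,r}\;=\;\interior\Bigl(\,\bigcup_{Q'\in\mathbb{D},\,Q'\subseteq Q}\;\bigcup_{I\in \mathcal{W}_{Q'}} I^{*}\,\Bigr).
\]

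The inclusion $\Omega_{Q_0,r}\subseteq B(Q_0,Cr)\cap\Omega$ is then immediate: every $I^{*}$ in the union sits within distance $\lesssim \ell(Q)\sim r$ of $Q_0$. For the opposite inclusion, any $X\in B(Q_0,r)\cap\Omega$ lies in some Whitney cube $I\in\mathcal{W}$, and after a harmless enlargement of the dilation constant defining $Q$, the dyadic cube $Q'\in\mathbb{D}$ of diameter $\sim\ell(I)$ closest to $I$ lies inside $Q$, giving $X\in I^{*}\subseteq \Omega_{Q_0,r}$.

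I would then verify the remaining properties of $\Omega_{Q_0,r}$ in turn. For the \emph{interior corkscrew condition}, a boundary point $P\in \partial\Omega_{Q_0,r}$ either lies on $\pO$ --- in which case a corkscrew for $\Omega$ at $(P,\rho)$ is also one for $\Omega_{Q_0,r}$ since it sits in some $I^{*}$ --- or lies on the ``roof'' of the Carleson box, where the center of a Whitney cube in $\mathcal{W}_{Q'}$ at the matching scale provides one. For the \emph{Harnack chain condition}, two points of $\Omega_{Q_0,r}$ are linked by first routing through the corkscrew point $A_r(Q_0)\in\Omega$ via the Harnack chain in $\Omega$; the dyadic nesting together with the comparability $\ell(I)\approx\dist(I,\pO)$ ensures that all intermediate balls can be kept inside the fattened Whitney cubes and hence inside $\Omega_{Q_0,r}$, at the cost of at most a constant factor in the chain length.

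The main obstacle will be establishing \emph{Ahlfors regularity of $\partial\Omega_{Q_0,r}$}. Its boundary decomposes into a portion on $\pO$, where the bound is inherited from the Ahlfors regularity of $\pO$, and a ``roof'' $\Sigma$ made up of faces of certain $I^{*}$'s. The upper bound on $\mathcal{H}^{n-1}(B(P,\rho)\cap\Sigma)$ follows from the uniformly bounded number of $I^{*}$-faces meeting any ball at a given scale. The lower bound --- especially near the transition region where $\Sigma$ meets $\pO$ --- is more delicate and requires the family $\{\mathcal{W}_{Q'}\}$ to be ``coherent'' in the sense that no face of $I^{*}$ can be entirely buried inside the union. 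This is exactly the bookkeeping carried out in \cite{HM}; once the fattening parameter and the defining constants of $\mathcal{W}_{Q'}$ are calibrated accordingly, all constants for $\Omega_{Q_0,r}$ depend only on the uniform and Ahlfors regularity constants of $\Omega$.
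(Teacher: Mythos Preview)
Your proposal is correct and follows essentially the same route as the paper: both invoke the Carleson box (sawtooth) construction of Hofmann--Martell \cite{HM} built from the Christ--David dyadic grid on $\pO$ and Whitney cubes in $\Omega$. The paper's proof simply cites the relevant lemmas from \cite{HM} directly (Lemma~3.55 for the inclusion $B(Q_0,cr)\cap\Omega\subset T_{\mathcal{Q}}$ and Lemma~3.61 for the uniform/Ahlfors-regular properties of $T_{\mathcal{Q}}$), whereas you sketch the actual content of that construction; your outline is an accurate summary of what those lemmas establish.
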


\begin{proof}
	Since $\pO\subset\mathbb{R}^n$ is Ahlfors regular, it has a nested grid 
	\[ \mathbb{D}=\{\mathcal{Q}_j^k: k\in\mathbb{Z}, j\in \mathcal{J}_k\} \]
	such that $\pO=\cup_{j\in\mathcal{J}_k}\mathcal{Q}_j^k$ for any $k\in\mathbb{Z}$, and each $\mathcal{Q}_j^k$ has diameter less than $2^{-k}$ and contains a ball of radius comparable to $2^{-k}$. (See Lemma 1.15 in \cite{HM} for details). 	For any surface ball $\Delta=\Delta(Q_0,r)$, there exists $\mathcal{Q}\in \mathbb{D} $ such that $\Delta\subset\mathcal{Q}$ and $l(\mathcal{Q})\approx r$. Let $T_{\mathcal{Q}}$ be the Carleson box defined as (3.52) (see also (3.47), (3,43)) in \cite{HM}). The authors of \cite{HM}  proved in Lemma 3.61 that $T_{\mathcal{Q}}$ is a 1-sided NTA domain (i.e. uniform domain) with Ahlfors regular boundary; and in Lemma 3.55 they proved there is some $c<1$ such that $B(Q_0, cr)\cap\Omega\subset T_{\mathcal{Q}}$.
	From the definition of $T_{\mathcal{Q}}$ and the fact that $l(\mathcal{Q})\approx r$, it is clear that we also have $T_{\mathcal{Q}}\subset B(Q_0, Cr)\cap\Omega$.
	
\end{proof}

\begin{lemma}[Boundary comparison principle]
	Let $u$ and $v$ be non-negative solutions in $B_{4s}(Q_0)\cap\Omega$ with vanishing boundary data on $\Delta_{4s}(Q_0)$. Then
	\begin{equation}\tag{P6}\label{P6}
		\frac{u(X)}{v(X)} \approx \frac{u(A_s(Q_0))}{v(A_s(Q_0))} \quad\text{~for any~} X\in B_s(Q_0)\cap\Omega. 
	\end{equation} 
\end{lemma}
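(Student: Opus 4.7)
The plan is to adapt the Jerison--Kenig argument for NTA domains from \cite{NTA}, using the geometric localization theorem (Theorem~\ref{thm:GLT}) in place of P.~Jones' construction, and invoking properties \eqref{P1}--\eqref{P5} just established for uniform domains with Ahlfors regular boundary. Write $A := A_s(Q_0)$ and $\Delta_r := \Delta_r(Q_0)$, and let $\omega^X$ denote the elliptic measure of $\Omega$. First I would apply Theorem~\ref{thm:GLT} at a suitable scale $\sim s$ to obtain a uniform sub-domain $\Omega' \subset B_{2s}(Q_0)\cap\Omega$ with Ahlfors regular boundary, whose constants depend only on those of $\Omega$ and which contains a fixed neighborhood of $Q_0$ in $\Omega$. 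By Theorem~\ref{ARCDC}, $\Omega'$ satisfies the CDC and is therefore Wiener regular, so its elliptic measure $\omega_{\Omega'}^X$ is well defined; Proposition~\ref{prop:bHr} extends $u$ and $v$ continuously to $\overline{\Omega'}$, so the Poisson representation applies on $\Omega'$.

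The heart of the proof is the two-sided pointwise bound: for any non-negative $L$-solution $w$ on $B_{4s}(Q_0)\cap\Omega$ vanishing on $\Delta_{4s}$,
\[
  c\, w(A)\, \omega^X(\Delta_{2s}) \;\leq\; w(X) \;\leq\; C\, w(A)\, \omega^X(\Delta_{2s}), \qquad X \in B_{s/2}(Q_0)\cap\Omega.
\]
For the upper bound, \eqref{P3} gives $w \leq C\, w(A)$ throughout $\Omega' \subset B_{2s}(Q_0)\cap\Omega$, while $w \equiv 0$ on $\partial\Omega'\cap\partial\Omega \subset \Delta_{4s}$; the Poisson representation on $\Omega'$ therefore gives
\[
  w(X) \;\leq\; C\, w(A)\, \omega_{\Omega'}^X\!\left(\partial\Omega'\setminus\partial\Omega\right).
\]
A maximum-principle argument inside $\Omega'$, applied to the non-negative $L$-solution $Y \mapsto \omega^Y(\Delta_{2s})$, then upgrades this to $w(X) \leq C\, w(A)\, \omega^X(\Delta_{2s})$: every $Y \in \partial\Omega'\setminus\partial\Omega$ satisfies $\delta(Y) \gtrsim s$, and \eqref{P2} together with the Harnack chain condition and the doubling property \eqref{P5} yield $\omega^Y(\Delta_{2s}) \gtrsim 1$ for such $Y$. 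The matching lower bound is obtained by applying the upper bound to the Green function $G(\cdot, A_{2s}(Q_0))$, whose value at $A$ is $\approx s^{2-n}$ by interior Harnack, and then comparing $w$ with $G(\cdot, A_{2s}(Q_0))$ via \eqref{P3}, \eqref{P4}, and the maximum principle inside $\Omega'$.

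Dividing the two-sided bound for $u$ by that for $v$ yields $u(X)/v(X) \approx u(A)/v(A)$ for $X \in B_{s/2}(Q_0)\cap\Omega$; the Harnack chain condition together with interior Harnack then propagates the estimate to all $X \in B_s(Q_0)\cap\Omega$. I expect the main obstacle to be closing the maximum-principle comparison for the upper bound: without exterior corkscrew balls in $\Omega$, barriers cannot be constructed directly at $\partial\Omega$, and the comparison between $\omega_{\Omega'}$ and $\omega$ must be channelled entirely through the inner boundary $\partial\Omega'\setminus\partial\Omega$. This is exactly where Theorem~\ref{thm:GLT} together with the Ahlfors regularity of $\partial\Omega$ do the heavy lifting, producing a localized sub-domain whose inner boundary sits uniformly away from $\partial\Omega$ with quantitatively controlled geometry.
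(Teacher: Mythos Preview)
Your proposal is correct and follows essentially the same approach as the paper: the paper's proof simply states that once the geometric localization theorem (Theorem~\ref{thm:GLT}) is available, the argument proceeds exactly as in Lemma~(4.10) of \cite{NTA}, and your outline is precisely a sketch of that Jerison--Kenig argument with Theorem~\ref{thm:GLT} playing the role of Jones' construction. The one point you should take care to verify explicitly is that the Carleson-box construction from \cite{HM} used in Theorem~\ref{thm:GLT} indeed yields $\delta(Y) \gtrsim s$ for $Y \in \partial\Omega'\setminus\partial\Omega$, since this is a property of the specific construction rather than a formal consequence of the containment $B(Q_0,r)\cap\Omega \subset \Omega' \subset B(Q_0,Cr)\cap\Omega$ alone.
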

\begin{proof}
	Once we have the geometric localization theorem, the proof proceeds the same as Lemma (4.10) in \cite{NTA}.
\end{proof}

\section{From $\omega_L \in A_{\infty}(\sigma)$ to the Carleson measure estimate}\label{sect:showBMOs}

Assume $\omega \in A_{\infty}(\sigma)$. For any continuous function $f\in C(\pO)$, let $u$ be the solution to the elliptic problem $Lu=0$ with boundary data $f$, we want to show that $|\nabla u|^2 \delta(X)$ is a Carleson measure, and in particular, 
\begin{equation}
	\sup_{\Delta \subset \pO} \frac{1}{\sigma(\Delta )} \iint_{T(\Delta )}|\nabla u|^2 \delta(X) dX \leq C\|f\|_{BMO(\sigma) }^2. \label{eq:CarlesonBMO}
\end{equation} 

Let $\Delta=\Delta(Q_0,r)$ be any surface ball. Denote the constant $c=\max\{\alpha+1, 27\}$ and let $\dt=c\Delta = \Delta(Q_0, cr)$ be its concentric surface ball. 
Let 
\[ f_1= (f-f_{\dt})\chi_{\dt}, \quad f_2 = (f-f_{\dt})\chi_{\pO\setminus\dt}, \quad f_3 = f_{\dt} = \fint_{\dt} f d\sigma ,\]
and let $u_1, u_2, u_3$ be the solutions to $Lu=0$ with boundary data $f_1, f_2, f_3$ respectively. Clearly $u_3$ is a constant, so its Carleson measure is trivial. 

We have shown in Section \ref{sect:prelim} (See \eqref{eq:CarlesonSquarepre} and \eqref{eq:CarlesonSquareU}) that 
\[ \iint_{T(\Delta)} \Carl{u_1} dX \leq C\int_{(\alpha+1)\Delta} S_{\alpha r}^2(u_1) d\sigma.\]
Since $\dt = c\Delta \supset (\alpha+1)\Delta$, it follows that
\begin{equation}
	\iint_{T(\Delta)} \Carl{u_1} dX \leq C \int_{\dt} S_{\alpha r}^2(u_1) d\sigma.  \label{eq:uoneCarlesonSquare}
\end{equation} 
By H\"older inequality, for $p>2$
\begin{equation}
	\int_{\dt}S_{\alpha r}^2(u_1)d\sigma  \leq \sigma(\dt)^{1-\frac 2 p} \left(\int_{\dt}S^p(u_1)d\sigma\right)^{2/p} \leq \sigma(\dt)^{1-\frac 2 p} \|S(u_1)\|_{L^p(\sigma)}^2. \label{eq:SquareHolder}
\end{equation}	

Under the assumption $\omega \in A_{\infty}(\sigma)$, the following theorems are at our disposal:

\begin{theorem}[\cite{CBMS} Theorem 1.4.13(vii) and Lemma 1.4.2]\label{thm:Nu}
Assume $\omega\in B_q(\sigma)$ for some $1<q<\infty$, then the elliptic problem $Lu=0$ is $L^p-$solvable with $1/p+1/q=1$: that is, if $u$ is a solution with boundary value $f\in L^p(\sigma)$, then $\|Nu\|_{L^p(\sigma)}\leq C\|f\|_{L^p(\sigma)}$.	
\end{theorem}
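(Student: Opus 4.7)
The plan is to reduce the non-tangential maximal function estimate to a weighted Hardy--Littlewood maximal inequality, and then derive that inequality from the reverse Hölder condition $\omega \in B_q(\sigma)$ by combining Hölder's inequality with Gehring's self-improvement.

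\smallskip

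\textbf{Step 1: Pointwise comparison $Nu \lesssim M_{\omega} f$.} From the representation formula $u(X) = \int_{\pO} f \, d\omega^X$, I would show that for $X \in \Gamma(Q)$,
\[
|u(X)| \le C \, M_{\omega} f(Q), \qquad M_{\omega} f(Q) := \sup_{\Delta \ni Q} \frac{1}{\omega(\Delta)} \int_{\Delta} |f|\, d\omega.
\]
The standard argument decomposes $\pO$ into annular surface balls $\Delta_j = \Delta(Q_X, 2^j \delta(X))$ and uses the change-of-pole estimate \eqref{P4}, the lower bound \eqref{P2}, and doubling \eqref{P5} to compare the density $d\omega^X / d\omega$ against $\omega(\Delta_*)^{-1}$ on $\Delta_*$ and against a geometrically decaying factor on each outer annulus. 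Taking the supremum over $X \in \Gamma(Q)$ yields $Nu(Q) \le C\, M_{\omega} f(Q)$.

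\smallskip

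\textbf{Step 2: From $B_q(\sigma)$ to a pointwise maximal inequality.} Write $k = d\omega/d\sigma$. For any surface ball $\Delta$ and any $Q \in \Delta$, Hölder's inequality gives
\[
\frac{1}{\omega(\Delta)} \int_{\Delta} |f| \, d\omega
\le \frac{1}{\omega(\Delta)} \left( \int_{\Delta} |f|^{p} d\sigma \right)^{1/p} \left( \int_{\Delta} k^{q} d\sigma \right)^{1/q}.
\]
The $B_q$ hypothesis bounds the second factor by $C\, \omega(\Delta) \sigma(\Delta)^{-1/p}$; substituting and taking the supremum yields the pointwise estimate
\[
M_{\omega} f(Q) \le C \bigl( M_{\sigma}(|f|^{p})(Q) \bigr)^{1/p}.
\]

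\smallskip

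\textbf{Step 3: Conclusion via self-improvement.} A direct $p$-th power and integration in Step 2 only controls $M_\omega f$ by the $L^{1}$-norm of $M_\sigma(|f|^p)$, which fails on $L^1$. To dodge this, I would invoke Gehring's self-improvement: $\omega \in B_q(\sigma)$ implies $\omega \in B_{q+\varepsilon}(\sigma)$ for some $\varepsilon > 0$ (here the Ahlfors regularity of $\sigma$ provides the doubling required for Gehring's lemma). Repeating Step 2 with exponent $q+\varepsilon$ gives
\[
M_{\omega} f(Q) \le C \bigl( M_{\sigma}(|f|^{p-\delta})(Q) \bigr)^{1/(p-\delta)}
\]
with $\delta > 0$ satisfying $1/(p-\delta) + 1/(q+\varepsilon) = 1$. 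Raising to the $p$-th power and integrating against $\sigma$, the $L^{p/(p-\delta)}(\sigma)$-boundedness of $M_{\sigma}$ (which holds because $p/(p-\delta) > 1$ and $\sigma$ is doubling) yields $\|M_{\omega} f\|_{L^{p}(\sigma)} \le C \|f\|_{L^{p}(\sigma)}$. Combining with Step 1 gives $\|Nu\|_{L^{p}(\sigma)} \le C\|f\|_{L^{p}(\sigma)}$.

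\smallskip

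The main obstacle I anticipate is Step 1: the non-tangential pointwise bound $Nu \lesssim M_\omega f$ requires careful use of the kernel estimate \eqref{P4} and doubling \eqref{P5} in the uniform-domain setting (without exterior corkscrews), since one cannot appeal directly to the classical NTA arguments of \cite{NTA}. Steps 2 and 3 are standard harmonic analysis once a doubling framework is in place.
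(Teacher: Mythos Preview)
The paper does not supply its own proof of this theorem; it quotes the result directly from Kenig's monograph \cite{CBMS} (Theorem 1.4.13(vii) and Lemma 1.4.2) and uses it as a black box. Your three-step argument is precisely the standard proof found in that reference: Lemma 1.4.2 of \cite{CBMS} is the pointwise bound $Nu \lesssim M_{\omega} f$ (your Step~1), and Theorem 1.4.13(vii) is the weighted maximal inequality $\|M_{\omega} f\|_{L^{p}(\sigma)} \le C\|f\|_{L^{p}(\sigma)}$ obtained via Gehring self-improvement (your Steps~2--3). Your anticipated obstacle in Step~1 --- that the kernel estimates require care in the uniform-domain setting without exterior corkscrews --- is exactly what the paper addresses in Section~\ref{sect:BR}, where properties \eqref{P1}--\eqref{P6} are established for uniform domains with Ahlfors regular boundary; once those are in place, the argument for $Nu \lesssim M_{\omega} f$ goes through verbatim.
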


\begin{theorem}[\cite{CBMS} Theorem 1.5.10]\label{thm:NuSu}
Assume $\omega \in A_{\infty} (\sigma)$, then if $Lu = 0$ with boundary value $f$, 
	we have $\|S(u)\|_{L^p(\sigma)}\leq C\|Nu\|_{L^p(\sigma)}$ for any $0<p<\infty$.
\end{theorem}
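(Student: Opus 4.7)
The plan is to establish a good-lambda inequality between $Su$ and $Nu$ with respect to the elliptic measure $\omega$, then transfer it to $\sigma$ using $\omega\in A_{\infty}(\sigma)$, and finally integrate in the distributional sense to obtain the $L^p$ bound. Concretely, I would show that there exists $\gamma_0>0$ small and $C>0$ such that for all $\lambda>0$ and $0<\gamma<\gamma_0$,
\[ \omega\bigl(\{Q\in\pO : Su(Q)>2\lambda,\ Nu(Q)\leqn \gamma\lambda\}\bigr) \leqn C\gamma^2\, \omega(\{Su>\lambda\}). \]
The $A_{\infty}$ hypothesis then upgrades this to a good-lambda inequality for $\sigma$ with constant $\theta(\gamma)\to 0$ as $\gamma\to 0$, and the standard distribution-function/Fubini argument
\[ \|Su\|_{L^p(\sigma)}^p = p\int_0^{\infty} \lambda^{p-1}\sigma(\{Su>\lambda\})\,d\lambda \]
together with a truncation of $u$ at finite height (to justify absorption) yields $\|Su\|_{L^p(\sigma)}\leqn C\|Nu\|_{L^p(\sigma)}$ for every $0<p<\infty$.

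To prove the good-lambda inequality, first Whitney-decompose the open set $\{Su>\lambda\}\subset\pO$ into a disjoint collection of surface balls $\{\Delta_j\}$ whose dilates are controlled by $\{Su\leqn\lambda\}$; the doubling of $\omega$ \eqref{P5} gives $\sum_j \omega(\Delta_j)\leqn C\omega(\{Su>\lambda\})$. Above each $\Delta_j$ I would invoke the geometric localization theorem (Theorem \ref{thm:GLT}) to replace $B(Q_j, Cr_j)\cap\Omega$ by a sawtooth-type subdomain $\Omega_j$ that is itself uniform with Ahlfors regular boundary, so that all tools \eqref{P1}--\eqref{P6} of Section \ref{sect:BR} are available locally with uniform constants. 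On $\Omega_j$ I would apply integration by parts to the identity $L(u^2)=-2\langle A\nabla u,\nabla u\rangle$ (valid since $Lu=0$), testing against the Green's function $G_j$ of $\Omega_j$ with pole at an interior corkscrew, to produce
\[ \iint_{\Omega_j} |\nabla u|^2\, G_j(X,\cdot)\, dX \lesssim \int_{\partial\Omega_j} u^2\, d\omega_j. \]
Combined with \eqref{P4} (which converts $G_j$ to $\delta(X)$ up to the factor $\omega^{X_j}(\Delta)/r^{n-1}$) and an elementary tent-vs.-cone comparison, this controls the portion of $Su$ coming from $\Omega_j$ by the $L^2(\omega_j)$ norm of boundary data, which on the part where $Nu\leqn\gamma\lambda$ is at most $C(\gamma\lambda)^2\omega(\Delta_j)$.

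The main obstacle is Step 2, the local Green's identity argument on a general uniform domain with Ahlfors regular (but not rectifiable) boundary, because $A$ is not assumed symmetric and the sawtooth $\partial\Omega_j$ has no a priori geometric regularity. Non-symmetry is handled by running the identity with both $L$ and its adjoint $L^*$ (or by the bilinear formulation in \cite{KKPT}), and the lack of classical boundary regularity is circumvented by working intrinsically with $\omega_j$ and $G_j$ rather than with $d\sigma_j$; the key point that makes this intrinsic approach affordable is precisely that Theorem \ref{thm:GLT} produces $\Omega_j$ with the same qualitative constants as $\Omega$, so properties \eqref{P1}--\eqref{P6} propagate uniformly across the Whitney decomposition. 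Once this technical step is in place, the conversion to $\sigma$ and the distributional integration are routine.
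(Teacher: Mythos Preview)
The paper does not give its own proof of this statement: Theorem \ref{thm:NuSu} is simply quoted from \cite{CBMS} Theorem 1.5.10 as one of the tools ``at our disposal'' once $\omega\in A_\infty(\sigma)$ is assumed, and is then applied directly (see \eqref{eq:SquareLp}). So there is no in-paper argument to compare against.

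That said, your outline is essentially the classical proof in \cite{CBMS} (see also \cite{HACAD} for the chord-arc setting): a good-$\lambda$ inequality for $\omega$ obtained via Whitney decomposition of $\{Su>\lambda\}$, a local square-function estimate coming from the identity $L(u^2)=-2\langle A\nabla u,\nabla u\rangle$ tested against a Green's function, transfer to $\sigma$ via $A_\infty$, and distributional integration. Your use of the geometric localization Theorem \ref{thm:GLT} to produce sawtooth subdomains $\Omega_j$ with uniform constants is exactly the right adaptation needed to carry the Lipschitz argument of \cite{CBMS} over to uniform domains with Ahlfors regular boundary; this is implicit in the paper's reliance on Section \ref{sect:BR}. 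The non-symmetry issue you flag is real but minor: one uses the Green's function of $L^*$ (so that $\omega_L$ appears on the boundary), precisely as the paper does in \eqref{globalL2} citing Lemma 1.5.1 of \cite{CBMS}. One small point worth tightening: in the Whitney decomposition step you also need to split $Su$ into a ``far'' piece (controlled by the nearby point where $Su\le\lambda$, via change of aperture) and a ``near'' piece (controlled by the local Green identity); your sketch collapses these, but the argument is standard.
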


Apply Theorem \ref{thm:Nu} and Theorem \ref{thm:NuSu} to $u_1$, and get
\begin{equation}
	\|S(u_1)\|_{L^p(\sigma)} \leq C\|f_1\|_{L^p(\sigma)} = C\left(\int_{\dt} |f-f_{\dt}|^p d\sigma\right)^{1/p}. \label{eq:SquareLp}
\end{equation} 
Combining \eqref{eq:uoneCarlesonSquare}, \eqref{eq:SquareHolder} and \eqref{eq:SquareLp}, we get
\begin{align}
	\iint_{T(\Delta)} \Carl{u_1} dX 
	  \leq C\sigma(\Delta) \|f\|_{BMO(\sigma)}^2. \label{uone}
\end{align}

To show similar estimate for $u_2$, let $\{E_m\}$ be a Whitney decomposition of $T(\Delta)$. On each Whitney cube $E_m$, we have the following Cacciopoli type estimate,
\begin{align*}
	 \iint_{E_m} \Carl{u_2} dX & \lesssim \delta(E_m)\iint_{E_m} |\nabla u_2|^2 dX \\
	 & \lesssim \delta(E_m)\cdot \frac{1}{\delta(E_m)^2} \iint_{\frac{3}{2}E_m} |u_2(X)|^2 dX \\
	 & \lesssim \iint_{\frac{3}{2}E_m} \frac{|u_2(X)|^2}{\delta(X)} dX.
\end{align*}
Summing up, we get
\begin{align}\label{eq:utwo}
	\iint_{T(\Delta)} \Carl{u_2} dX 
	& \lesssim \sum_m \iint_{\frac 3 2 E_m} \frac{|u_2(X)|^2}{\delta(X)} dX \nonumber \\
	& \lesssim \iint_{T(\frac 3 2 \Delta)} \frac{|u_2(X)|^2}{\delta(X)} dX.
\end{align}
Recall $3\Delta/2$ denotes $\Delta(Q_0, 3r/2)$, and $T(3\Delta/2)$ denotes $B(Q_0, 3r/2)\cap\Omega$.

Let $u_2^{\pm}$ be the solutions to $Lu=0$ with non-negative boundary data $f_2^{\pm}$, then $u_2 = u_2^+ - u_2^-$ and $|u_2|= u_2^+ + u_2^-$. Let
\begin{equation}
	v(X) = |u_2(X)| = \int_{\pO} \left(f_2^+ + f_2^-\right) d\omega^X = \int_{\pO\setminus\dt} |f-f_{\dt}| d\omega^X. \label{def:v}
\end{equation} 
We have the following lemma:
\begin{lemma}\label{lm:vBMObd}
The function $v$ defined in \eqref{def:v} satisfies
	\begin{itemize}
		\item $v(X) \leq C \|f\|_{BMO(\sigma)} $ for all $X\in T(9\Delta) $. 
		\item $ v(X) \leq C\left(\dfrac{\delta(X)}{r}\right)^\beta \|f\|_{BMO(\sigma)}$ for all $X\in T(3\Delta/2)$. Here $\beta\in (0,1)$ is the degree of boundary H\"older regularity for non-negative solutions, and it only depends on $n$ and the ellipticity of $A$ (see \eqref{P1}).
	\end{itemize}
\end{lemma}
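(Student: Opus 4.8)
The plan is to estimate $v(X) = \int_{\pO\setminus\dt}|f-f_{\dt}|\,d\omega^X$ by decomposing the tail of the boundary into dyadic annuli around $Q_0$ and using the decay of elliptic measure away from a boundary ball, together with the standard BMO telescoping bound on averages. For $k\geq 0$ set $\Delta_k = 2^k\dt = \Delta(Q_0, 2^k c r)$, so that $\pO\setminus\dt = \bigcup_{k\geq 0}(\Delta_{k+1}\setminus\Delta_k)$ (only finitely many terms are nonempty since $\pO$ is bounded). On each annulus we bound $|f - f_{\dt}| \leq |f - f_{\Delta_{k+1}}| + |f_{\Delta_{k+1}} - f_{\dt}|$; the second term is $\lesssim (k+1)\|f\|_{BMO(\sigma)}$ by the usual telescoping estimate through the chain $\dt \subset \Delta_1 \subset \cdots \subset \Delta_{k+1}$ using doubling of $\sigma$ (Ahlfors regularity), while the first term integrates against $\omega^X$ to something controlled by $\omega^X(\Delta_{k+1})\cdot(\text{average of }|f-f_{\Delta_{k+1}}|)$ after one more application of the fact that the $\omega^X$-average over $\Delta_{k+1}$ of $|f-f_{\Delta_{k+1}}|$ is dominated by its $\sigma$-average via $\omega \in A_\infty(\sigma)$ — or, more simply and without invoking $A_\infty$, by pulling out $\|f - f_{\Delta_{k+1}}\|_{L^\infty}$-type control is unavailable, so I would instead just use $\int_{\Delta_{k+1}\setminus\Delta_k}|f - f_{\Delta_{k+1}}|\,d\omega^X \leq \omega^X(\Delta_{k+1}\setminus\Delta_k)\,\|\,\cdot\,\|$; cleanest is to absorb everything into the single bound $\int_{\Delta_{k+1}\setminus\Delta_k}|f-f_{\dt}|\,d\omega^X \lesssim (k+1)\|f\|_{BMO(\sigma)}\,\omega^X(\pO\setminus\Delta_k)$ after observing $|f-f_{\dt}|$ restricted to $\Delta_{k+1}$ has $\sigma$-average $\lesssim (k+1)\|f\|_{BMO}$ and then using that $\omega^X$ restricted to the annulus is a measure of total mass $\omega^X(\Delta_{k+1})$.

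The key quantitative input is the decay estimate $\omega^X(\pO\setminus\Delta_k) \lesssim 2^{-k\beta}$ for $X\in T(9\Delta)$: this follows from Proposition \ref{prop:bHr} (boundary Hölder regularity \eqref{P1}) applied to the non-negative solution $w_k(X) = \omega^X(\pO\setminus\Delta_k)$, which vanishes continuously on $\Delta_{k}\cap\pO$ and is bounded by $1$, hence is $\lesssim (|X-Q_0|/(2^{k-1}cr))^\beta \lesssim 2^{-k\beta}$ for $X$ in a ball of radius comparable to $r$ around $Q_0$ — here I use that $X\in T(9\Delta)$ keeps $|X-Q_0|\lesssim r$. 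Summing the geometric-times-polynomial series $\sum_{k\geq 0}(k+1)2^{-k\beta}\|f\|_{BMO(\sigma)} \lesssim \|f\|_{BMO(\sigma)}$ gives the first bullet. For the second bullet, where $X\in T(3\Delta/2)$ so $\delta(X)$ may be much smaller than $r$, I apply \eqref{P1} once more: $v$ is a non-negative solution in $B_{3r/2}(Q_0)\cap\Omega$ vanishing on $\Delta_{3r/2}(Q_0)$ (since $f_2$ is supported off $\dt \supset \Delta_{3r/2}$), wait — one must be slightly careful that $v$ need not vanish on all of $\Delta_{cr}$, but it does vanish on $\Delta_{3r/2}(Q_0)$ because $f_2 \equiv 0$ there; so \eqref{P1} with $\rho = r/2$ gives $v(X) \lesssim (|X-Q_0|/r)^\beta \sup_{B(Q_0, 3r/2)\cap\Omega} v \lesssim (\delta(X)/r)^\beta \|f\|_{BMO(\sigma)}$, using $|X-Q_0|\approx\delta(X)$ for $X$ near the boundary in a cone (or more crudely $|X-Q_0|\lesssim\delta(X)$ after adjusting apertures) and the first bullet to bound the supremum.

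The main obstacle is bookkeeping the geometric constants so that the boundary regularity estimate \eqref{P1} can legitimately be applied on each scale $2^k cr$ — in particular ensuring $9\Delta$ (resp. $3\Delta/2$) sits well inside the region where the rescaled version of \eqref{P1} is valid, and checking that $v$ genuinely has vanishing boundary data on the relevant sub-ball (which requires $f_2$ to be supported outside a fixed dilate of $\Delta$, forcing the choice $c = \max\{\alpha+1, 27\}$ made just before the lemma). The second subtlety is converting the $\sigma$-BMO control of the averages of $f$ into control of integrals against the elliptic measure $\omega^X$; the honest route is the telescoping argument plus the decay of $\omega^X(\pO\setminus\Delta_k)$, which only uses \eqref{P1} and Ahlfors regularity, and crucially does \emph{not} require the $A_\infty$ hypothesis — so the lemma holds for all uniform domains with Ahlfors regular boundary, as stated.
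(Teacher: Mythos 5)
Your decomposition into dyadic annuli $\Delta_{k+1}\setminus\Delta_k$ and the decay estimate $\omega^X(\pO\setminus\Delta_k)\lesssim 2^{-k\beta}$ obtained from \eqref{P1} are both sound, and in fact mirror the first half of the paper's argument (which extracts the same decay via the boundary comparison principle applied to the kernel $K(A,Q)=d\omega^A/d\omega$). Your treatment of the second bullet — one further application of \eqref{P1} on the ball $B(Q_X, 6r)$ using the first bullet to control the sup — is also essentially the paper's argument.

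However, there is a genuine gap at the central step. You claim
\[
\int_{\Delta_{k+1}\setminus\Delta_k}|f-f_{\dt}|\,d\omega^X \;\lesssim\; (k+1)\,\|f\|_{BMO(\sigma)}\,\omega^X(\Delta_{k+1}),
\]
justifying it by the observation that $|f-f_{\dt}|$ has $\sigma$-average $\lesssim (k+1)\|f\|_{BMO(\sigma)}$ over $\Delta_{k+1}$ and that $\omega^X$ has total mass $\omega^X(\Delta_{k+1})$ there. This does not follow: knowing the $\sigma$-average of a nonnegative function $g$ on a ball and the total $\omega^X$-mass of the ball says nothing about $\int g\,d\omega^X$ unless one has quantitative control relating $\omega^X$ to $\sigma$. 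A BMO function is unbounded in general, and absent such control $\omega^X$ could concentrate exactly where $g$ is large. This is precisely what the $A_\infty$ hypothesis supplies — indeed, without it $\omega^X$ need not even be absolutely continuous with respect to $\sigma$. Your closing assertion that the lemma ``does not require the $A_\infty$ hypothesis'' is therefore incorrect; the lemma lives in Section \ref{sect:showBMOs}, which opens with the standing assumption $\omega\in A_\infty(\sigma)$, and the proof cannot avoid it.

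The paper's route through this step is instructive: write $d\omega^A = K(A,Q)\,\bar k(Q)\,d\sigma(Q)$ with $\bar k=d\omega/d\sigma$, use the kernel bound $K(A,Q)\lesssim 2^{-j\beta}/\omega(2^j\dt)$ on the $j$-th annulus, then apply H\"older's inequality with conjugate exponents $p,q$ to split $\int_{2^j\dt}|f-f_{\dt}|\bar k\,d\sigma$ into an $L^p$ factor of $|f-f_{\dt}|$ (controlled by $(1+j)\|f\|_{BMO(\sigma)}$ via John--Nirenberg) and an $L^q$ factor of $\bar k$, and finally invoke the reverse H\"older ($B_q$) inequality — the concrete manifestation of $A_\infty$ — to make the factor $\omega(2^j\dt)$ cancel. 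Inserting this step, or any equivalent use of the $B_q$ property of $\bar k$, would close the gap in your proposal.
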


\begin{proof}
	By the definition \eqref{def:v}, the function $v$ vanishes on $\dt$. Note that $\dt\supset 27\Delta$ by the choice of $\dt$, $v$ is a non-negative solution in $T(27\Delta)$ and vanishes on $27\Delta$. Let $A$ be a corkscrew point in $T(9\Delta)$, by Lemma \ref{lm:bHp}
	\[ v(X) \leq C v(A),\quad\text{for all~} X\in T\left( 9\Delta\right). \]
	
	Let $\bar k = d\omega/d\sigma$ be the Radon-Nikodym derivative of $\omega$ with respect to $\sigma$. By the assumption $\omega\in A_{\infty}(\sigma)$, there exists some $q>1$ such that for any surface ball $\Delta'$,
	\begin{equation}\label{eq:Bq}
		 \left( \fint_{\Delta'} \bar{k}^q d\sigma\right)^{1/q} \leq C \fint_{\Delta'} \bar{k} d\sigma ,
	\end{equation}	 
	Let $K(X,\cdot) = d\omega^X/d\omega$ be the Radon-Nikodym derivative of $\omega^X$ with respect to $\omega$ , i.e.
	\begin{equation}\label{def:kernalK}
		K(X,Q) = \lim_{\Delta' \limit Q} \dfrac{\omega^X(\Delta')}{\omega(\Delta')}. 
	\end{equation} 
	Then
	\begin{align} \label{eq:vexp}
		v(A) = \int_{\pO\setminus \dt} |f-f_{\dt}| d\omega^A & = \int_{\pO\setminus \dt} |f-f_{\dt}| K(A,Q)\bar{k}(Q)d\sigma (Q) \nonumber \\
		& = \sum_{j=1}^{\infty} \int_{2^j\dt\setminus 2^{j-1}\dt } |f-f_{\dt}| K(A,Q)\bar{k}(Q)d\sigma (Q). 
	\end{align}
	
	Let $\Delta'$ be any surface ball contained in $2^j\dt\setminus 2^{j-1}\dt$, and $A_j$ be a corkscrew point in $T(2^j\dt)$. Then by Corollary 1.3.8 \cite{CBMS} (It follows easily from the boundary comparison principle \eqref{P6})
	\begin{equation}
		\dfrac{\omega(\Delta')}{\omega(2^j\dt)} \approx \omega^{A_j}(\Delta'). \label{condproba}
	\end{equation} 
	On the other hand, by the boundary regularity of $\omega^{X}(\Delta')$ in $T(2^{j-1} \dt)$ , we have
	\begin{align}
		\omega^A(\Delta') & \leq C\left(\dfrac{|A-Q_0|}{2^{j-1} \cdot c r}\right)^{\beta} \sup_{Y\in 2^{j-1}\dt} \omega^Y(\Delta') \nonumber \\
		& \leq C\left(\dfrac{9r}{2^{j-1} c r}\right)^{\beta} \omega^{A_{j}}(\Delta') \nonumber \\
		& \lesssim 2^{-j\beta} \omega^{A_j}(\Delta'). \label{Holderforw}
	\end{align}
	Combining \eqref{condproba} and \eqref{Holderforw} we have
	\begin{align*}
		\dfrac{\omega^A(\Delta')}{\omega(\Delta')} & \lesssim   2^{-j\beta} \dfrac{ \omega^{A_j}(\Delta')}{\omega(\Delta')} \approx \dfrac{2^{-j\beta}}{\omega(2^j \dt)} ,\\
	\end{align*}
	for any $\Delta'$ contained in $2^j \dt\setminus 2^{j-1}\dt$.
	Therefore by the definition \eqref{def:kernalK}
	\begin{equation}\label{eq:Kest}
		\sup_{Q\in 2^j\dt\setminus 2^{j-1}\dt} K(A,Q) \lesssim \dfrac{2^{-j\beta}}{\omega(2^j \dt)}.
	\end{equation} 
	Combining \eqref{eq:Bq}, \eqref{eq:vexp} and \eqref{eq:Kest},
	\begin{align}\label{eq:vpri}
		v(A) & \lesssim \sum_{j} \dfrac{2^{-j\beta}}{\omega(2^j \dt)} \left(\int_{2^j\dt} |f-f_{\dt}|^p d\sigma\right)^{1/p} \left(\int_{2^j\dt} \bar{k}^q d\sigma\right)^{1/q} \nonumber \\
		& \lesssim \sum_{j} 2^{-j\beta} \left(\fint_{2^j\dt} |f-f_{\dt}|^p d\sigma\right)^{1/p} \nonumber \\
		& \lesssim \|f\|_{BMO(\sigma)}.
	\end{align}
	Therefore 
	\begin{equation}\label{eq:vBMObd}
		v(X) \leq C \|f\|_{BMO(\sigma)} \qquad \text{for all~} X \in T(9\Delta).
	\end{equation}  
	
	For any $X\in T(3\Delta/2)$, let $Q_X$ be a boundary point such that $|X-Q_X|=\delta(X)$. Note that
	\[|X-Q_X|=\delta(X) \leq |X-Q_0| < \frac{3r}{2} , \]
	so $X\in B(Q_X,3r/2)\cap\Omega$. We consider the Dirichlet problem in $B(Q_X, 6r)\cap \Omega$. Note that
	\[ |Q_X-Q_0| \leq |Q_X-X|+|X-Q_0| <\frac{3r}{2} + \frac{3r}{2} =3r, \]
	hence $\overline{B\left(Q_X,6r\right)} \subset B(Q_0, 9r)$.
	Note that $\dt \supset 9\Delta \supset \Delta(Q_X, 6r)$, $v$ is a non-negative solution in $B(Q_X, 6r)\cap\Omega$ and vanishes on $\Delta(Q_X, 6r)$. By the boundary H\"older regularity (Proposition \ref{prop:bHr}) and the first part of this lemma \eqref{eq:vBMObd}, we conclude
	\begin{align*}
		v(X) \lesssim \left( \frac{|X-Q_X|}{3r/2}\right)^{\beta} \sup_{\overline{B(Q_X,6r)\cap\Omega}} v  \lesssim \left( \frac{\delta(X)}{r}\right)^{\beta} \sup_{T(9\Delta)} v  \lesssim \left( \frac{\delta(X)}{r}\right)^{\beta} \|f\|_{BMO(\sigma)} .
	\end{align*}
\end{proof}

Using Lemma \ref{lm:vBMObd} and \eqref{eq:utwo}, we get
\begin{equation}
	\iint_{T(\Delta)} \Carl{u_2} dX \lesssim \dfrac{\|f\|_{BMO(\sigma)}^2}{r^{2\beta}}\left( \iint_{T(\frac{3}{2}\Delta)} \delta(X)^{2\beta - 1} dX \right) . \label{eq:utwoBMO}
\end{equation} 
Note that $2\beta-1>-1$, we may use the following lemma.


\begin{lemma}\label{lm:delta} 
For any $\alpha>-1$, we have
	\begin{equation}
		\iint_{T(2\Delta)} \delta(X)^{\alpha} dX \lesssim r^{n+\alpha}. \label{eq:deltaintegral}
	\end{equation} 
\end{lemma}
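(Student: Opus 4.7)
The plan is to bound the integral by partitioning $T(2\Delta)$ according to distance to the boundary and using Ahlfors regularity of $\pO$ to control the size of each dyadic level. The hypothesis $\alpha>-1$ enters exactly to make the resulting geometric series converge, which is sharp.

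First, I would take a Whitney decomposition $\{Q_j\}$ of $\Omega$ into cubes satisfying $\diam(Q_j)\approx\dist(Q_j,\pO)=:\ell_j$, so that $\delta(X)\approx \ell_j$ uniformly for $X\in Q_j$. I would group the cubes that meet $T(2\Delta)=B(Q_0,2r)\cap\Omega$ by dyadic scale,
\[
\mathcal{F}_k=\{j:\ell_j\approx 2^{-k}r\},\qquad k\geq 0,
\]
observing that $\ell_j\lesssim r$ for every such cube because $\delta(X)\leq |X-Q_0|<2r$ when $X\in Q_j\cap T(2\Delta)$, so only $k\geq 0$ occur (up to a harmless constant shift).

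The key geometric estimate is a tubular neighborhood bound: for every $0<s\leq r$,
\[
\bigl|\{X\in B(Q_0,3r)\cap\Omega:\delta(X)<s\}\bigr|\lesssim r^{n-1}s.
\]
I would prove this by selecting a maximal disjoint family $\{B(P_i,s)\}_{i=1}^{N}$ with $P_i\in\pO\cap B(Q_0,4r)$. Ahlfors regularity of $\sigma$ together with disjointness forces $N\lesssim (r/s)^{n-1}$, and a simple triangle inequality shows the enlarged balls $B(P_i,3s)$ cover the tubular neighborhood; hence its Lebesgue measure is at most $\sum_i |B(P_i,3s)|\lesssim N s^n\lesssim r^{n-1}s$.

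Applying this at scale $s\approx 2^{-k}r$ gives $\sum_{j\in\mathcal{F}_k}|Q_j|\lesssim 2^{-k}r^n$, so
\[
\sum_{j\in\mathcal{F}_k}\iint_{Q_j}\delta(X)^\alpha\,dX\approx (2^{-k}r)^{\alpha}\sum_{j\in\mathcal{F}_k}|Q_j|\lesssim 2^{-k(\alpha+1)}\,r^{n+\alpha}.
\]
Summing over $k\geq 0$ and using $\alpha+1>0$ to sum the geometric series yields the claimed $\iint_{T(2\Delta)}\delta(X)^\alpha\,dX\lesssim r^{n+\alpha}$. The only real obstacle is the Ahlfors-regular covering estimate for the tubular neighborhood; everything else is bookkeeping, and no further tools from the paper are needed.
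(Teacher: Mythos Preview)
Your proposal is correct and follows essentially the same approach as the paper: both arguments decompose by dyadic distance to $\pO$, use Ahlfors regularity via a covering/packing argument to bound the Lebesgue measure of the tubular neighborhood $\{X\in T(2\Delta):\delta(X)\lesssim 2^{-k}r\}$ by $2^{-k}r^n$, and then sum the geometric series using $\alpha+1>0$. The only cosmetic difference is that you route the dyadic decomposition through a Whitney decomposition of $\Omega$, whereas the paper works directly with the level sets $T_{<j}=\{X\in T(2\Delta):\delta(X)<2^{-j+1}r\}$; the Whitney step is harmless but unnecessary.
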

\begin{proof}
	If $\alpha \geq 0$, the proof is trivial. For $j= 0,1,\cdots$ let 
	\[ T_j = T(2\Delta) \cap \{x\in\Omega: 2^{-j}r \leq \delta(X) < 2^{-j+1}r\}, \]
	\[ T_{<j} = T(2\Delta) \cap \{x\in\Omega: \delta(X) < 2^{-j+1}r\}. \]
	Then
	\begin{equation}
		\iint_{T(2\Delta)} \delta(X)^{\alpha} dX = \sum_{j=0}^{\infty} \iint_{T_j} \delta(X)^\alpha dX \lesssim \sum_{j=0}^{\infty} (2^{-j}r)^{\alpha} m(T_{<j}). \label{eq:intdelta}
	\end{equation}

	Consider a covering of $4\Delta$ by $4\Delta \subset \bigcup\limits_{Q\in 4\Delta} B(Q, 2^{-j+1}r)$, from which one can extract a countable Vitali sub-covering $4\Delta \subset \cup_{k} B(Q_k, 2^{-j+1}r)$,	where $Q_k\in 4\Delta$ and the balls $B_k = B(Q_k, 2^{-j+1}r/5)$ are pairwise disjoint. The fact that $Q_k\in 4\Delta = \Delta(Q_0, 4r)$ implies 
	\[ B_k = B\left(Q_k, \frac{2^{-j+1}r}{5}\right) \subset B\left(Q_0,4r + \frac{2^{-j+1}r}{5} \right).  \]
	And the pairwise disjointness of $B_k$'s implies there are only finitely many of them. In fact,
	\begin{equation}
		\sum\limits_{k} \sigma(B_k) = \sigma\left( \bigcup\limits_{k} B_k \right) \leq \sigma\left( \Delta\left(Q_0, 4r+ \frac{2^{-j+1}r}{5}\right)\right) \lesssim \left( 4r + \frac{2r}{5}\right)^{n-1}. \label{eq:disjoint}
	\end{equation} 
	Note that $\sigma(B_k) \approx \left( 2^{-j+1}r/5 \right)^{n-1}$ independent of $k$.
	Let $N$ be the number of $B_k$'s. By \eqref{eq:disjoint}
	\begin{equation}
		N\cdot \left( \frac{2^{-j+1}r}{5} \right)^{n-1} \leq \left( 4r + \frac{2r}{5}\right)^{n-1} , \qquad \text{thus } N\lesssim 2^{j(n-1)}. \label{eq:boundN}
	\end{equation}
	
	For any $X\in T_{<j}$, let $Q_X\in\pO$ be such that $|X-Q_X|=\delta(X)$. Then
	\begin{equation} \label{eq:closetoX}
		|Q_X-Q_0| \leq |Q_X-X| + |X - Q_0| < 4r, \qquad \text{i.e. } Q_X\in 4\Delta.
	\end{equation}
	Thus $Q_X \in B(Q_k, 2^{-j+1}r)$ for some $k$. Moreover
	$T_{<j} \subset \bigcup\limits_{k} B(Q_k, 2\cdot 2^{-j+1}r)$.
	Therefore
	\[ m(T_{<j}) \leq N\cdot \sup_{k} m(B(Q_k, 2\cdot 2^{-j+1}r)) \lesssim 2^{-j}r^n. \]
	Combined with \eqref{eq:intdelta} we get
	\[		\iint_{T(2\Delta)} \delta(X)^{\alpha} dX \lesssim \sum_{j=0}^{\infty} (2^{-j}r)^{\alpha}\cdot 2^{-j}r^n = r^{n+\alpha} \sum_{j=0}^{\infty} 2^{-j(\alpha+1)} \lesssim r^{n+\alpha}. \]
	The last sum is convergent because $\alpha+1 >0$.	
\end{proof}
Combining \eqref{eq:utwoBMO} and \eqref{eq:deltaintegral}, we get
\begin{equation}
	\iint_{T(\Delta)} \Carl{u_2} dX \lesssim r^{n-1} \|f\|_{BMO(\sigma)}^2 \lesssim \sigma(\Delta)\|f\|_{BMO(\sigma)}^2. \label{utwo}
\end{equation} 
\eqref{uone} and \eqref{utwo} together give the Carleson measure estimate \eqref{eq:CarlesonBMO}.

\section{From the Carleson measure estimate to $\omega_L \in A_{\infty}(\sigma)$}\label{sect:showAinfty}

Let $\Delta$ be an arbitrary surface ball. Let $f$ be a continuous non-negative function supported in $\Delta$ and $u$ is the solution with boundary value $f$. In particular $u$ is non-negative. Consider another surface ball $\Delta'$ of radius $r$ that is $r-$distance away from $\Delta$.
Then by assumption,
\begin{equation}
	\iint_{T(\Delta')} \Carl{u} dX\leq C\sigma(\Delta')\|f\|_{BMO(\sigma)}^2 \label{eq:Carlcond}
\end{equation}

We have shown in \eqref{eq:CarlesonSquareL} that
\begin{equation} \label{eq:CarlSquaregeq}
	\iint_{T(\Delta')}\Carl{u} dX \gtrsim\int_{\Delta'/2} S_{r/2}^2(u) d\sigma.
\end{equation}
In order to get a lower bound of the square function $S_{r/2}(u)$, we need to decompose the non-tangential cone $\Gamma_{r/2}(Q)$ as follows. 

\begin{center}
	\includegraphics{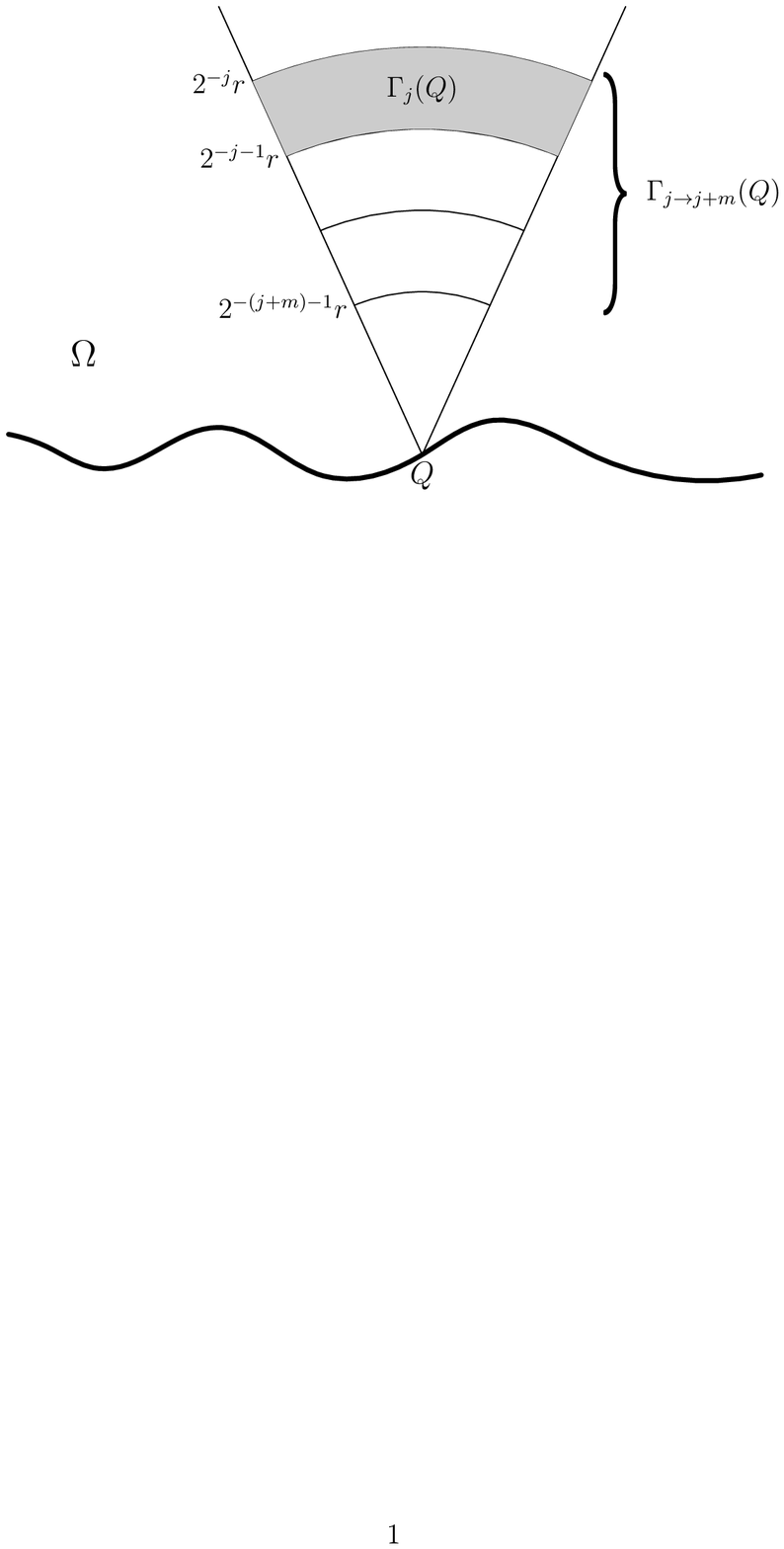}
\end{center}

\noindent For any $Q\in\Delta'/2$ and any $j\in\mathbb{N} $, let 
\begin{equation}
	\Gamma_j(Q) = \Gamma(Q) \cap \left(B_{2^{-j}r}(Q)\setminus B_{2^{-j-1}r}(Q)\right) \label{def:stripe}
\end{equation}  
be a stripe in the cone $\Gamma_{r/2}(Q)$ at height $2^{-j}r$, and 
\begin{equation}
	\Gamma_{j\rightarrow j+m}(Q) = \bigcup_{i=j}^{j+m}\Gamma_i(Q), \label{def:unionstripe}
\end{equation} 
a union of $(m+1)$ stripes. The above figure illustrates these notations, even though it over-simplifies the shape of the non-tangential cone $\Gamma(Q)$ and the relations between different radii. 

We claim the following Poincar\'e type inequality holds, because $u$ vanishes on $\Delta'$:
\begin{lemma}\label{lm:Poincare}
There exist an aperture $\overline\alpha>\alpha$, and integers $m_1, m_2$, such that the following Poincar\'e inequality holds for all $Q\in\Delta'/2$, 
	\begin{equation}
	\iint_{\Gamma_j^{\alpha}(Q)} u^2 dX \leq C (2^{-j}r)^2 \iint_{\Gamma_{j-m_1 \rightarrow j+m_2}^{\overline{\alpha} }(Q)} |\nabla u|^2 dX.\label{Poincareineq}
\end{equation}
The constants $m_1, m_2$ and $C$ only depend on $n$ and $\alpha$.
\end{lemma}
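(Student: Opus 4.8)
The plan is to reduce the estimate on the stripe $\Gamma_j^\alpha(Q)$ to a Poincar\'e inequality on a fixed (scale-invariant) configuration, using the fact that $u$ vanishes on $\Delta'$ together with the corkscrew/Harnack-chain structure of $\Omega$. Fix $Q\in\Delta'/2$ and $j\in\mathbb{N}$, and set $\rho=2^{-j}r$. First I would observe that $\Gamma_j^\alpha(Q)$ is contained in the annular region $B_{\rho}(Q)\setminus B_{\rho/2}(Q)$, at distance roughly $\rho$ from $Q$, and that every point $X\in\Gamma_j^\alpha(Q)$ satisfies $\delta(X)\le |X-Q|\le\rho$ but also, by the cone condition, $\delta(X)\ge\rho/(2\alpha)$ (after shrinking the stripe or enlarging the relevant region slightly). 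Thus the stripe is a "thick" region at scale $\rho$. The boundary datum of $u$ vanishes on $\Delta'\supset\Delta(Q,\rho)$ (here I use $\rho\le r/2$ and $Q\in\Delta'/2$, so $B_\rho(Q)\cap\pO\subset\Delta'$), which is exactly what makes a Poincar\'e inequality with the gradient on the right available without a mean-value correction term.

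Next I would carry out the standard covering argument. Cover $\Gamma_j^\alpha(Q)$ by a bounded number $N=N(n,\alpha)$ of balls $B(X_i,c\rho)$ with $X_i\in\Gamma_j^\alpha(Q)$ and $c$ a small dimensional constant so that $2B(X_i,c\rho)\subset\Omega$ when $\delta(X_i)\gtrsim\rho$ — but this only handles the interior part of the stripe. For the points of the stripe close to the boundary (where $\delta(X)$ may be as small as $\rho/(2\alpha)$ but the ball $B(X,c\rho)$ may poke out of $\Omega$), I would instead use a boundary Poincar\'e inequality: on $B(Q_X, C\rho)\cap\Omega$ where $Q_X\in\pO$ realizes $\delta(X)$, the function $u$ vanishes on $B(Q_X,C\rho)\cap\pO$ (as long as $C\rho$ stays well inside $\Delta'$, which forces a constraint relating $C$ to the gap between $\Delta'$ and $\Delta$ and hence the choice of $m_1$), so Poincar\'e gives $\iint_{B(Q_X,c'\rho)\cap\Omega}u^2\lesssim\rho^2\iint_{B(Q_X,C\rho)\cap\Omega}|\nabla u|^2$. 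The key point is that all the enlarged balls $B(X_i,C\rho)$ and $B(Q_X,C\rho)$ that arise, being centered at distance $\approx\rho=2^{-j}r$ from $Q$ and having radius a fixed multiple of $\rho$, are swallowed by a union of finitely many consecutive stripes $\Gamma_{j-m_1}^{\overline\alpha}(Q),\dots,\Gamma_{j+m_2}^{\overline\alpha}(Q)$ for a sufficiently large aperture $\overline\alpha>\alpha$ and fixed integers $m_1,m_2$ depending only on $n$ and $\alpha$: going up by $m_1$ scales absorbs the enlargement toward $Q$, going down by $m_2$ scales absorbs the enlargement away from $Q$, and increasing the aperture absorbs the lateral spread. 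Summing the $N$ local estimates and using finite overlap of the enlarged balls gives \eqref{Poincareineq}.

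The main obstacle, and the step requiring the most care, is the treatment of the part of the stripe within distance $\lesssim\rho$ of $\pO$: one must check that the enlarged boundary balls $B(Q_X,C\rho)\cap\Omega$ on which we apply the boundary Poincar\'e inequality are genuine domains on which $u$ vanishes on the portion of $\pO$ inside them — this needs $C\rho$ small enough that $\Delta(Q_X,C\rho)\subset\Delta'$, which is where the gap hypothesis "$\Delta'$ is $r$-distance away from $\Delta$" and the constraint $\rho=2^{-j}r$ enter, and it is what pins down $m_1$ — and that the boundary Poincar\'e inequality is available on such sets. For the latter I would invoke a Poincar\'e inequality for $W^{1,2}$ functions vanishing (in the $W_0^{1,2}$ sense) on a boundary piece of positive capacity; the CDC (Theorem~\ref{ARCDC}), valid here because $\pO$ is Ahlfors regular, furnishes the needed lower capacity bound and makes the Poincar\'e constant depend only on $n$, $\lambda$, and the structural constants. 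Once these boundary estimates are in place, the interior piece is routine Cacciopoli-free Poincar\'e on balls compactly contained in $\Omega$, and the bookkeeping of which stripes are needed is elementary geometry.
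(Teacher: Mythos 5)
Your proposal takes a genuinely different route from the paper's, and unfortunately it has a gap that I don't see how to close along the lines you suggest.

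The problem is the boundary Poincar\'e step. You propose to control $\iint_{B(X_i,c\rho)} u^2$ via a boundary Poincar\'e inequality on $B(Q_X, C\rho)\cap\Omega$, producing $\rho^2\iint_{B(Q_X, C'\rho)\cap\Omega}|\nabla u|^2$ on the right. But the region $B(Q_X,C'\rho)\cap\Omega$ \emph{cannot} be swallowed by any union of cone stripes $\Gamma^{\overline\alpha}_{j-m_1\to j+m_2}(Q)$, no matter how large you take $\overline\alpha$, $m_1$, $m_2$. Indeed, $B(Q_X,C'\rho)\cap\Omega$ contains points $Y$ with $\delta(Y)\to 0$ but $|Y-Q|\approx\rho$ (e.g.\ points near the boundary away from $Q$); these satisfy $|Y-Q|/\delta(Y)\to\infty$ and so lie outside $\Gamma^{\overline\alpha}(Q)$ for every fixed $\overline\alpha$. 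Since the whole point of Lemma~\ref{lm:Poincare} is that the gradient integral must be confined to a non-tangential cone (this is what makes it a lower bound for the truncated square function $S^{\overline\alpha}_{r/2}u$ later), the boundary Poincar\'e route produces a stronger-looking right-hand side that is actually incomparable to the one required. Note also that, because every $X\in\Gamma^\alpha_j(Q)$ has $\delta(X)\geq 2^{-j-1}r/\alpha$, the stripe is uniformly interior at its own scale; the issue is never that interior balls $B(X_i,c\rho)$ poke out of $\Omega$ (take $c<1/(2\alpha)$), but rather that the interior Poincar\'e inequality on such balls leaves behind the mean-value terms $u_{B_i}^2|B_i|$, and the only tool you invoke to kill those --- the boundary Poincar\'e --- forces the integration region outside the cone.

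The paper avoids this entirely by never integrating the gradient over a set touching $\pO$. It compares the mean $u_B$ over a ball $B\subset\Gamma^\alpha_j(Q)$ to the mean $u_{B'}$ over a ball $B'\subset\Gamma^\alpha_{j+m}(Q)$ deeper in the cone, via a Harnack chain of \emph{interior} balls; the chain and its dilates stay inside an enlarged cone $\Gamma^{\overline\alpha}_{j-m_1\to j+m_2}(Q)$ precisely because Harnack balls satisfy $\delta(B)\gtrsim\diam(B)$ (Observations \ref{obs:one}--\ref{obs:four}). The mean $u_{B'}$ is then shown to satisfy $u_{B'}^2\lesssim 2^{-2\beta m}\fint_B u^2$ by the boundary H\"older regularity \eqref{P1} combined with the boundary Harnack principle \eqref{P3} and the interior Harnack inequality --- a \emph{pointwise} use of the vanishing boundary data, not an integral Poincar\'e one --- and for $m$ large this term is absorbed into the left-hand side. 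Thus the role that your boundary Poincar\'e was meant to play (exploiting $u|_{\Delta'}=0$) is instead played by \eqref{P1}, which controls the values of $u$ deeper in the cone without ever leaving the cone. Your covering/finite-overlap bookkeeping and the identification of the CDC/Ahlfors-regular structure as the source of the boundary decay are both correct as far as they go, but the argument as written does not deliver a right-hand side confined to $\Gamma^{\overline\alpha}_{j-m_1\to j+m_2}(Q)$.
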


\subsection{Proof of Lemma \ref{lm:Poincare}: Poincar\'e inequality} \label{sect:Poincare}

The following lemma is the standard Poincar\'e inequality (see (7.45) and Lemma 7.16 in \cite{GlTr}).
\begin{lemma}
	Let $B$ be a ball in $\mathbb{R}^n$. If the function $u\in W^{1,2}(B)$, then
	\begin{equation}\label{eq:standardPoincare}
		\|u-u_{B}\|_{L^2(B)} \leq \left(\frac{\omega_n}{|B|} \right)^{1-1/n} \diam(B)^{n} \|\nabla u\|_{L^2(B)}.
	\end{equation}
	Here $u_{B} = \fint_{B} u dx$, $|B|$ is the $n-$dimensional Lebesgue measure of $B$ and $\omega_n$ is the volume of the unit ball in $\mathbb{R}^n$.
\end{lemma}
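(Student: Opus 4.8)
The plan is to reproduce the classical proof of the Poincar\'e--Wirtinger inequality, exactly as in \cite[Lemma 7.16]{GlTr} together with the convolution estimate \cite[(7.45)]{GlTr}; the statement is recorded only as a tool for Section~\ref{sect:Poincare}, so no new argument is needed. First I would reduce to the case $u\in C^{\infty}(\overline B)$ by density of smooth functions in $W^{1,2}(B)$ (a ball has the segment property, so this is standard), using that both sides of \eqref{eq:standardPoincare} are continuous with respect to the $W^{1,2}(B)$ norm: the left side because $u\mapsto u-u_B$ is a bounded operator on $L^2(B)$ and $W^{1,2}(B)\hookrightarrow L^2(B)$, the right side because $u\mapsto\nabla u$ is continuous $W^{1,2}(B)\to L^2(B)$.

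Next, for a fixed $x\in B$ and an arbitrary $y\in B$, I would use convexity of $B$ to write $u(x)-u(y)=-\int_0^{|x-y|}\nabla u(x+t\omega)\cdot\omega\,dt$ with $\omega=(y-x)/|y-x|$, take absolute values, and then average the resulting inequality over $y\in B$. Passing to polar coordinates centered at $x$, extending $|\nabla u|$ by zero outside $B$, and using $B\subset B(x,\diam B)$, the radial variable of $y$ decouples from the integrand and its integration contributes at most a factor $(\diam B)^n/n$; converting the remaining integral back to Cartesian coordinates produces the Riesz-potential bound
\[ |u(x)-u_B|\le \frac{(\diam B)^n}{n|B|}\int_B \frac{|\nabla u(y)|}{|x-y|^{n-1}}\,dy, \]
valid for a.e. $x\in B$ after undoing the smoothing.

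Finally, I would read off the $L^2$ estimate by viewing the right-hand side as a constant multiple of the convolution of $|\nabla u|$ (extended by zero) with the kernel $K(z)=|z|^{1-n}\chi_{\{|z|<\diam B\}}$, and applying Young's convolution inequality, $\|K*|\nabla u|\|_{L^2(\R^n)}\le \|K\|_{L^1(\R^n)}\,\|\nabla u\|_{L^2(B)}$, where $\|K\|_{L^1}=n\omega_n\diam B$ by a one-line polar-coordinate computation. Collecting constants and substituting $|B|=\omega_n(\diam B/2)^n$ gives \eqref{eq:standardPoincare} (up to the precise dimensional constant, which is obtained in \cite{GlTr} by slightly more careful bookkeeping in the averaging step). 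The only point meriting attention is the change of variables in that averaging step — tracking how the radial integration over $y$ decouples to yield the $(\diam B)^n$ factor — but this is entirely routine, and since the result is classical and cited there is no genuine obstacle.
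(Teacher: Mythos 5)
The paper offers no proof of this lemma at all --- it is quoted directly from \cite{GlTr} ((7.45) together with Lemma 7.16) --- so the comparison is with the classical argument you are reconstructing, and your reconstruction is essentially correct: density reduction, the convexity/averaging step, and an $L^2$ bound for the resulting Riesz potential is exactly the cited route. Your averaging step is precisely GT Lemma 7.16 and already produces the sharp factor $(\diam B)^n/(n|B|)$ in the pointwise bound $|u(x)-u_B|\le \frac{(\diam B)^n}{n|B|}\int_B |\nabla u(y)|\,|x-y|^{1-n}\,dy$. The one inaccuracy is in the last step and in where you locate the loss of the constant: plain Young's inequality with $K(z)=|z|^{1-n}\chi_{\{|z|<\diam B\}}$ gives $\|K\|_{L^1}=n\omega_n\diam B$, hence the overall constant $\omega_n(\diam B)^{n+1}/|B| = 2\left(\omega_n/|B|\right)^{1-1/n}(\diam B)^{n}$ for a ball, i.e.\ twice the constant stated in \eqref{eq:standardPoincare}. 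The sharp constant is not obtained by ``more careful bookkeeping in the averaging step'' (that step is already optimal in your write-up) but by the refinement contained in GT Lemma 7.12: since the kernel is radially decreasing, the relevant convolution norm is computed on the ball of the \emph{same measure} as $B$ (radius $r_B=\diam B/2$ here), giving $n\omega_n r_B$ instead of $n\omega_n\diam B$ and hence exactly the stated constant. This factor of $2$ is harmless for the paper's purposes, since the lemma is only used through \eqref{eq:PoincareinBall} and later estimates with implicit constants, so your argument, either with the constant doubled or with the Lemma 7.12 refinement inserted, serves the same role.
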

\noindent Let $r_B$ denote the radius of $B$, then we can rewrite \eqref{eq:standardPoincare} as
\begin{align}
	\iint_B |u(x) - u_B|^2 dx  \leq 4^n r_B^2 \iint_{B} |\nabla u(y)|^2 dy.\label{eq:PoincareinBall}
\end{align}

Assume there is a Harnack chain $B=B_1, B_2, \cdots, B_M = B'$ from $B\subset \Gamma_j^{\alpha}(Q)$ to $B'\subset\Gamma_{j+m}^{\alpha}(Q)$, by the triangle inequality
\begin{align}\label{eq:trg}
	\iint_{B} |u(x)-u_{B'}|^2 dx \leq & 2 \iint_{B} |u(x)-u_{B}|^2 dx + 
	 2M|B| \cdot \sum_{j=1}^{M-1} |u_{B_j}-u_{B_{j+1}}|^2.
\end{align}	 
Assume in addition that consecutive balls $B_i=B(x_i, r_i)$ and $B_{i+1}=B(x_{i+1}, r_{i+1})$ have comparable sizes
\begin{equation}
	c r_{i+1} \leq r_{i} \leq C r_{i+1}, \label{eq:radratio}
\end{equation} 
with constants $0<c<C$.
 We want to estimate $|u_{B_{i}}-u_{B_{i+1}}|$ by the integral of $\nabla u$. By $B_i \cap B_{i+1} \neq \emptyset$ and \eqref{eq:radratio}, we know 
 \begin{equation}
 	B_{i+1}\subset \lambda B_{i} \quad \text{with~} \lambda = 1+2/c>1. \label{eq:radratioCCL}
 \end{equation} 
Hence
\begin{align}
	|u_{B_{i+1}} -u_{\lambda B_{i}}|^2 
	& \leq \left( \frac{1}{|B_{i+1}|} \iint_{B_{i+1}}|u(x)-u_{\lambda B_i}| dx \right)^2 \nonumber \\
	& \leq \left(\frac{\lambda r_i}{r_{i+1}}\right)^{2n}  \fint_{\lambda B_i} |u(x) - u_{\lambda B_i}|^2 dx \nonumber \\
	& \lesssim \dfrac{\lambda^{n+2} C^{2n}}{|B_i|} r_i^2 \iint_{\lambda B_i} |\nabla u(y)|^2 dy \qquad \text{by \eqref{eq:PoincareinBall}}.
\end{align}
Similarly
\[ |u_{B_i}-u_{\lambda B_i}|^2 \lesssim \frac{\lambda^{n+2}}{|B_i|}r_i^2 \iint_{\lambda B_i}|\nabla u(y)|^2 dy. \]
Therefore
\begin{equation}
	|u_{B_i}-u_{B_{i+1}}|^2 \leq \frac{A(n, c, C)}{|B_i|}r_i^2\iint_{\lambda B_i}|\nabla u(y)|^2 dy. \label{eq:compave}
\end{equation} 
Plugging \eqref{eq:compave} back into \eqref{eq:trg}, we get
\begin{align}
	\iint_{B} |u(x)-u_{B'}|^2 dx & \leq C(n,M,c, C) r_B^2 \sum_{k=1}^{M-1} \iint_{\lambda B_k} |\nabla u(y)|^2 dy \nonumber \\
	& \leq \widetilde{C}(n,M,c, C) r_B^2  \iint_{\bigcup\limits_{k=1}^{M-1} \lambda B_k} |\nabla u(y)|^2 dy . \label{eq:uuave}
\end{align} 

On the other hand, by assumption the last ball $B'\subset \Gamma_{j+m}^\alpha(Q)$, we have 
\begin{align}
	u_{B'}^2 \leq \sup_{\Gamma^{\alpha}_{j+m}(Q)} u^2  \leq C\left(\frac{2^{-(j+m)}r}{2^{-j}r}\right)^{2\beta} \sup_{B(Q,2^{-j}r)\cap \Omega} u^2  \lesssim_{n,\alpha} 2^{-2\beta m} u^2(A_j). \label{eq:lastballcork}
\end{align}
where $A_j$ is the corkscrew point in $B(Q,2^{-j}r)\cap\Omega $. The second inequality is by the boundary regularity \eqref{P1} and the fact that $u$ vanishes on $\Delta'$, and the last inequality by \eqref{P3}. Since $B\subset \Gamma_j^\alpha(Q)$ is a non-tangential ball and $u$ is non-negative, by the Harnack principle
\[ u(x) \geq c_0 u(A_j) \quad\text{for all~} x\in B, \]
for a constant $c_0<1$.
Hence 
\begin{equation}
	u_{B'}^2 \lesssim 2^{-2\beta m}u(A_j)^2 \lesssim 2^{-2\beta m} \fint_{B} u^2 dx. \label{eq:estlastball}
\end{equation}

Combining \eqref{eq:uuave} and \eqref{eq:estlastball}, we obtain
\begin{align}
	\iint_B u^2 dx & \leq 2  |B|\left(u_{B'}\right)^2 + 2\iint_B |u(x) - u_{B'}|^2 dx \nonumber \\
	& \leq A(n,\alpha,c,C) 2^{-2\beta m} \iint_{B} u^2 dx + \widetilde{C}(n,M,c, C) r_B^2  \iint_{\bigcup\limits_{k=1}^{M-1} \lambda B_k} |\nabla u(y)|^2 dy. \label{eq:beforeabsorb}
\end{align}
Choose $m$ big enough such that $A(n,\alpha, c, C) 2^{-\beta m} \leq 1/2$, then we can absorb the first term on the right hand side of \eqref{eq:beforeabsorb} to the left, and obtain
\begin{equation}
	\iint_B u^2 dx \lesssim_{n,\alpha,c,C} r_B^2 \iint_{\bigcup\limits_{k=1}^{M-1} \lambda B_k} |\nabla u(y)|^2 dy. \label{eq:Poincaresketch}
\end{equation}
Note that after $m$ is fixed, by the Harnack chain condition (see Definition \ref{def:HCC}), the number of balls $M\leq C(m)$ is also fixed, thus we omit the dependence on $M$ in the above inequality. This is Poincar\'e inequality for non-tangential balls.

In order to prove the Poincar\'e inequality \eqref{Poincareineq} for non-tangential cones, we just need to cover $\Gamma_j^{\alpha}(Q)$ by balls; we also need to choose the Harnack chain carefully so that the integration region $\cup_{k=1}^{M-1} 2\lambda B_k$ in the right hand side of \eqref{eq:Poincaresketch} is also contained in a non-tangential cone, possibly of a bigger aperture and wider stripe.
Let us first make the following simple observation:
\begin{obs}\label{obs:one}
	Let $B$ be a Harnack ball with constants $(C_1,C_2)$ (see the definition of Harnack balls in \eqref{Harnackball}). Assume $B$ contains some point $X\in\Gamma^{\alpha}(Q)$, 
	then  
	\[ B\subset \Gamma^{\alpha(1+\widetilde{C}_1)}(Q), \quad \widetilde C_1>0 \text{~ is a constant only depending on~} C_1. \] 
\end{obs}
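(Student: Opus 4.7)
The approach is simply to unwind the definitions via the triangle inequality, exploiting the Harnack-ball lower bound on $\delta(B)$. Fix an arbitrary point $Y \in B$; the goal is to verify $|Y-Q| \leq \alpha(1+\widetilde{C}_1)\delta(Y)$. The triangle inequality gives
\[ |Y - Q| \leq |Y - X| + |X - Q| \leq \diam(B) + \alpha\,\delta(X), \]
using $X \in \Gamma^{\alpha}(Q)$. The task then reduces to controlling both $\diam(B)$ and $\delta(X)$ by constant multiples of $\delta(Y)$.

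For this I plan to use the Harnack-ball inequality $\delta(B) \geq C_1 \diam(B)$ together with the $1$-Lipschitz property of the distance function $\delta$. Since $Y \in B$, one has $\delta(Y) \geq \delta(B) \geq C_1 \diam(B)$, giving $\diam(B) \leq \delta(Y)/C_1$. Likewise, $\delta(X) \leq \delta(Y) + |X-Y| \leq \delta(Y) + \diam(B) \leq \delta(Y)(1 + 1/C_1)$. Combining these two bounds,
\[ |Y-Q| \;\leq\; \Bigl[\,\alpha + \tfrac{1+\alpha}{C_1}\,\Bigr]\delta(Y) \;=\; \Bigl[\alpha\bigl(1+\tfrac{1}{C_1}\bigr) + \tfrac{1}{C_1}\Bigr]\delta(Y) \;\leq\; \alpha\Bigl(1 + \tfrac{2}{C_1}\Bigr)\delta(Y), \]
where the last step uses $\alpha \geq 1$ (apertures are always taken $>1$ in the paper). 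This yields the claim with $\widetilde C_1 = 2/C_1$, depending only on $C_1$ as required.

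There is no genuine obstacle; the statement is essentially a definition-chase, which is why it is phrased as an observation rather than a lemma. The only point worth being careful about is the exact reading of $\delta(B)$ in the Harnack-ball inequality \eqref{Harnackball}: I interpret it as $\dist(B,\partial\Omega) = \inf_{Z\in B}\delta(Z)$, but since the $1$-Lipschitz property forces the $\delta$-values on $B$ to lie within $\diam(B) \leq \delta(B)/C_1$ of one another, any reasonable reading produces the same conclusion up to a harmless modification of the constant.
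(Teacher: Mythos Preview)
Your argument is correct; it is exactly the straightforward triangle-inequality unwinding that the paper has in mind (the paper states this as an Observation without supplying a proof). The choice $\widetilde C_1 = 2/C_1$ is fine, and your reading of $\delta(B)$ as $\dist(B,\partial\Omega)$ is the intended one.
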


If in adition $X\in\Gamma^{\alpha}_j(Q)$, then $|X-Q| \approx 2^{-j}r$, and we can get more precise estimate:
\begin{obs}\label{obs:two}
	Assume $B$ contains a non-tangential point $X\in\Gamma_j^{\alpha}(Q)$, then
\[ B\subset \mathlarger{\Gamma}^{\alpha(1+\widetilde C_1)}_{j-1\rightarrow j+n_0}(Q),\quad n_0 \text{~is an integer depending only on~} \alpha \text{~and~} C_1 . \]
\end{obs}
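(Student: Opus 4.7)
The plan is to combine Observation \ref{obs:one} (which already delivers the cone inclusion $B\subset\Gamma^{\alpha(1+\widetilde{C}_1)}(Q)$) with a direct geometric estimate controlling how far points of $B$ can stray from the dyadic shell at radius $\approx 2^{-j}r$ around $Q$. All that remains after Observation \ref{obs:one} is to show that for every $Y\in B$, the distance $|Y-Q|$ lies in the range $[2^{-j-n_0-1}r,\,2^{-j+1}r)$, for some integer $n_0$ depending only on $\alpha$ and $C_1$.

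First I would estimate $\diam(B)$. Since $B$ is a Harnack ball with constants $(C_1,C_2)$ and $X\in B$, the inequality $\delta(X)\geq\delta(B)\geq C_1\diam(B)$ together with $\delta(X)\leq|X-Q|$ (because $Q\in\partial\Omega$) and the hypothesis $|X-Q|<2^{-j}r$ (from $X\in\Gamma_j^{\alpha}(Q)$) gives
\[ \diam(B)\;\leq\;\frac{|X-Q|}{C_1}\;<\;\frac{2^{-j}r}{C_1}. \]
Next, for any $Y\in B$ the triangle inequality supplies both halves of the desired annular containment. The upper bound is
\[ |Y-Q|\;\leq\;|X-Q|+\diam(B)\;<\;\left(1+\frac{1}{C_1}\right)2^{-j}r\;<\;2^{-j+1}r, \]
using that $C_1>1$ is always available when choosing Harnack-ball constants (by the remark following Definition \ref{def:HCC}). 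The lower bound is
\[ |Y-Q|\;\geq\;|X-Q|-\diam(B)\;\geq\;|X-Q|\left(1-\frac{1}{C_1}\right)\;\geq\;2^{-j-1}r\left(1-\frac{1}{C_1}\right), \]
using $|X-Q|\geq 2^{-j-1}r$ from $X\in\Gamma_j(Q)$.

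Finally I would choose $n_0=\lceil\log_2\!\bigl(C_1/(C_1-1)\bigr)\rceil$, so that $1-1/C_1\geq 2^{-n_0}$, giving $|Y-Q|\geq 2^{-j-n_0-1}r$. Combining the annular containment $B\subset B_{2^{-j+1}r}(Q)\setminus B_{2^{-j-n_0-1}r}(Q)$ with the cone inclusion from Observation \ref{obs:one} yields exactly $B\subset\Gamma^{\alpha(1+\widetilde{C}_1)}_{j-1\rightarrow j+n_0}(Q)$. The integer $n_0$ as defined depends only on $C_1$; the dependence on $\alpha$ advertised in the statement enters only through the aperture dilation coming from Observation \ref{obs:one}. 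There is no substantive obstacle here: the single bookkeeping point is that $C_1>1$ makes both $1+1/C_1<2$ and $1-1/C_1>0$ useful, and this is automatic in the Harnack-ball setup of the paper.
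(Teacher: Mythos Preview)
Your proof is correct and follows exactly the route the paper intends: the paper states this as an Observation with only the hint ``$|X-Q|\approx 2^{-j}r$, and we can get more precise estimate,'' leaving the annular containment as a straightforward triangle-inequality computation from Observation~\ref{obs:one} and the Harnack-ball bound $C_1\diam(B)\le\delta(B)\le\delta(X)\le|X-Q|$, which is precisely what you carry out. Your remark that $n_0$ really depends only on $C_1$ (with the $\alpha$-dependence living in the aperture from Observation~\ref{obs:one}) is also accurate.
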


Moreover, by induction:
\begin{obs}\label{obs:four}
	 If $B_1, B_2, \ldots, B_k$ are Harnack balls with constants $(C_1, C_2)$ such that $B_j \cap B_{j+1} \neq \emptyset$, and $B_1$ contains some point $X\in \Gamma_j^{\alpha}(Q)$, then
	 \[ \bigcup\limits_{j=1}^k B_j \subset \mathlarger{\Gamma}^{\alpha(1+\widetilde C_1)^k}_{j-k\rightarrow j+n_0 + (k-1)m_0}(Q),\]
	 where $m_0$ is an integer depending only on $C_1$ (more precisely, $m_0$ is such that $1/(1+\widetilde C_1) \geq 2^{-m_0}$).
\end{obs}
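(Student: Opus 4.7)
The plan is to establish Observation~\ref{obs:four} by induction on $k$, with Observation~\ref{obs:two} serving as the building block at each step. The base case $k=1$ is exactly Observation~\ref{obs:two}, which places $B_1 \subset \Gamma^{\alpha(1+\widetilde C_1)}_{j-1\rightarrow j+n_0}(Q)$.

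For the inductive step, suppose the conclusion holds for chains of length $k-1$. Applying it to the subchain $B_1, \ldots, B_{k-1}$ gives
\[
\bigcup_{i=1}^{k-1} B_i \subset \Gamma^{\alpha(1+\widetilde C_1)^{k-1}}_{j-(k-1)\rightarrow j+n_0+(k-2)m_0}(Q).
\]
Since $B_{k-1}\cap B_k\neq\emptyset$, pick a point $Y$ in the intersection. By the inductive hypothesis $Y$ belongs to some stripe $\Gamma^{\alpha(1+\widetilde C_1)^{k-1}}_{j'}(Q)$ with $j-(k-1)\leq j' \leq j+n_0+(k-2)m_0$. Now $B_k$ is a Harnack ball containing the non-tangential point $Y$, so applying Observation~\ref{obs:two} with aperture $\alpha(1+\widetilde C_1)^{k-1}$ and stripe index $j'$ produces
\[
B_k \subset \Gamma^{\alpha(1+\widetilde C_1)^k}_{j'-1 \rightarrow j'+n_0}(Q).
\]
Unioning this with the inductive hypothesis (and using that a cone with smaller aperture lies inside one with larger aperture, so we may replace the exponent $k-1$ by $k$) reduces the task to bounding the resulting stripe range $[j-k,\, j'+n_0]$.

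The one technical point is the upper bound: we need $j'+n_0 \leq j+n_0+(k-1)m_0$. Since $j'\leq j+n_0+(k-2)m_0$ by the inductive hypothesis, this collapses to the single inequality $n_0\leq m_0$. This is precisely the role of the defining condition $2^{-m_0}\leq 1/(1+\widetilde C_1)$: the same geometric relation that governs $n_0$ in Observation~\ref{obs:two} (namely, that multiplying the aperture by $(1+\widetilde C_1)$ can only reach heights up to a bounded factor further from $Q$) shows that $m_0$ can be chosen once and for all to dominate $n_0$. The lower bound $j-k\leq j'-1$ is immediate from $j'\geq j-(k-1)$.

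The main obstacle is not the induction itself, which is routine, but ensuring that the constants $\widetilde C_1$, $n_0$, and $m_0$ produced by Observations~\ref{obs:one}--\ref{obs:two} truly depend only on $C_1$ (and $\alpha$ in a way that does not compound with the iteration). The delicate point is that at the $k$-th step we invoke Observation~\ref{obs:two} with the inflated aperture $\alpha(1+\widetilde C_1)^{k-1}$; for the induction to be uniform in $k$, one must verify that the upward stripe extension contributed by each step is bounded by the same $m_0$ independent of the current aperture, which is exactly the content of the logarithmic comparison $2^{-m_0}\leq 1/(1+\widetilde C_1)$.
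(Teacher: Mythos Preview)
Your approach is exactly what the paper intends: the paper states Observation~\ref{obs:four} with the phrase ``Moreover, by induction'' and gives no further proof, and you correctly fill in that induction using Observation~\ref{obs:two} at each step. Your identification of the key technical point---that the stripe increment at each inductive step must be bounded by a constant $m_0$ depending only on $C_1$ (through $\widetilde C_1$), and \emph{not} on the current aperture $\alpha(1+\widetilde C_1)^{k-1}$---is precisely the content that makes the induction close. One small clarification: in your middle paragraph you write the step-$k$ stripe extension as $n_0$ and then require $n_0\le m_0$, whereas what you really need (and argue for in your final paragraph) is that the stripe extension at each step after the first is directly bounded by $m_0$; with that reading the upper bound $j'+m_0\le j+n_0+(k-1)m_0$ follows immediately from $j'\le j+n_0+(k-2)m_0$ without any separate comparison of $n_0$ and $m_0$.
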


Let $A$ and $A'$ be arbitrary points in $\Gamma_j^{\alpha}(Q)$ and $\Gamma_{j+m}^{\alpha}(Q)$ respectively. Then 
\[ \rho = \min(\delta(A), \delta(A')) \geq 2^{-(j+m)-1}r/\alpha , \qquad |A-A'| \leq 2\cdot 2^{-j}r \lesssim 2^{m}\rho . \]
By the Harnack chain condition, there is a chain of open Harnack balls $B_1, B_2, \cdots, B_M$ with constants $(C_1, C_2)$ 
that connects $A$ to $A'$, and the number of balls $M\leq C(m)$. By Observation \ref{obs:two}, the balls $B_1$ and $B_M$ are non-tangential balls of aperture $\alpha(1+\widetilde C_1)$:
\[B_1 \subset \mathlarger{\Gamma}^{\alpha(1+\widetilde C_1)}_{j-1\rightarrow j+n_0}(Q), \quad B_M \subset \mathlarger{\Gamma}^{\alpha(1+\widetilde C_1)}_{j+m-1\rightarrow j+m+n_0}(Q). \]
Simple computation shows that the sizes of two consecutive Harnack balls are comparable:
\begin{equation}
	\frac{C_1}{C_2+1} \diam(B_j) \leq \diam(B_{j+1}) \leq \frac{C_2+1}{C_1} \diam(B_j). \label{eq:radratioHB}
\end{equation} 
Recall we showed in Section \ref{sect:Poincare} (see \eqref{eq:radratioCCL}) that \eqref{eq:radratioHB} implies $B_{j+1} \subset \lambda B_j$, with a constant $\lambda$ depending on $C_1, C_2$. 
As we have discussed in Remark (2) after Definition \ref{def:HCC}, if $\Omega$ satisfies the Harnack chain condition with \eqref{preHarnackball}, we may choose $C_1$ (lower bound constant for the Harnack ball) appropriately such that the enlarged ball $\widetilde B_j = \lambda B_j$ still lies in $\Omega$, and moreover, its distance to the boundary is still comparable to its diameter. More precisely, it is an easy exercise to show that if we choose $C_1 \approx 26C^4$, the enlarged balls $\widetilde B_j$'s are still Harnack balls with modified constants:
\begin{equation}
	\frac{3}{2} \diam(\widetilde B_j) \leq \delta(\widetilde B_j) \leq C_2 \diam(\widetilde B_j). \label{HBforLB}
\end{equation}

Denote $\IT(A, A') = \cup_{i=1}^{M-1} \widetilde B_i$ (IT stands for \textquotedblleft integration tube\textquotedblright). By \eqref{HBforLB} and Observation \ref{obs:four},  
\[ \IT(A,A')\subset \mathlarger{\Gamma}^{h(\alpha, m)}_{j-M\rightarrow j + n_0 + M m_0 }(Q), \]
where $h(\alpha, m), n_0, m_0$ depend on the constants of the Harnack balls $3/2, C_2$, the number of balls $M = O(m)$ and the aperture $\alpha$ we start with. Thus by \eqref{eq:Poincaresketch}
\begin{align*}
	\iint_{B_1} u^2(x) dx \lesssim_{n,\alpha} r_{B_1}^2 \iint_{\IT(A,A')} |\nabla u(y)|^2 dy \lesssim_{n,\alpha} (2^{-j}r)^2 \iint_{\mathlarger{\Gamma}^{h(\alpha, m)}_{j-M\rightarrow j + n_0 + M m_0 }(Q)} |\nabla u(y)|^2 dy.
\end{align*}

To summarize, for any $A\in \Gamma_j^{\alpha}(Q)$, we can find a Harnack ball $B$ containing $A$ which satisfies $B\subset \Gamma^{\alpha(1+\widetilde C_1)}_{j-1\rightarrow j+n_0}(Q) $ and
\begin{align}
	\iint_{B} u^2(x) dx \lesssim_{n,\alpha} (2^{-j}r)^2 \iint_{\mathlarger{\Gamma}^{\overline \alpha }_{j-m_1\rightarrow j + m_2 }(Q)} |\nabla u(y)|^2 dy , \label{eq:PoincareNTball}
\end{align}
where the aperture $\overline \alpha> \alpha$, and $n_0, m_1, m_2$ are integers depending on $\alpha$. We cover $\Gamma_j^{\alpha}(Q)$ by such Harnack balls:
\begin{equation}
	\Gamma_j^{\alpha}(Q) \subset \bigcup\limits_{X\in \Gamma_j^{\alpha}(Q)} B^X \subset \Gamma^{\alpha(1+\widetilde C_1)}_{j-1\rightarrow j+n_0}(Q), \label{covering}
\end{equation} 
from which we can extract a Vitali sub-covering $\Gamma_j^{\alpha}(Q) \subset \cup_k B^k$, such that $\{B^k/5\}$ are pairwise disjoint.
By the definition \eqref{def:stripe} and \eqref{def:unionstripe}, the set $\Gamma^{\alpha(1+\widetilde C_1)}_{j-1\rightarrow j+n_0}(Q)$ is contained in an annulus with small radius $2^{-(j+n_0)-1}r$ and big radius $2^{-(j-1)}r$. By the disjointedness of $\{B^k/5\}$'s and the fact that each $B^k/5$ has radius comparable to $2^{-j}r$, we can show that the number of balls in the Vitali covering is bounded by a constant $N=N(n, \alpha)$. 

 Finally, by the finite overlap of Vitali covering and \eqref{eq:PoincareNTball}, we have
\begin{align*}
	\iint_{\Gamma_j^{\alpha}(Q)} u^2(x) dx & \leq C(n) \sum_{k} \iint_{B^k} u^2 dX \\
	& \lesssim  N(n,\alpha) \cdot (2^{-j}r)^2 \iint_{\Gamma^{\overline\alpha }_{j-m_1\rightarrow j + m_2 }(Q)} |\nabla u(y)|^2 dy .
\end{align*}
This finishes the proof of Lemma \ref{lm:Poincare}.

\subsection{From the Carleson measure estimate to estimate of the boundary value}

Given $\alpha>0$, let $\overline\alpha> \alpha$ and $m_1, m_2 \in \mathbb{N}$ be defined as in Lemma \ref{lm:Poincare}. Using the Poincar\'e type inequality \eqref{Poincareineq}, we can get a lower bound of the square function (defined in the cone $\Gamma^{\overline\alpha}(Q)$):
\begin{align*}
	|S_{r/2}^{\overline\alpha }(u)(Q)|^2 & = \iint_{\Gamma^{\overline \alpha}_{r/2} (Q)} |\nabla u|^2 \delta(X)^{2-n} dX \\
	& \geq \frac{1}{m_1+m_2} \sum_{j=m_1+1}^{\infty} \iint_{\Gamma^{\overline\alpha}_{j-m_1 \rightarrow j + m_2}(Q)} |\nabla u|^2 \delta(X)^{2-n} dX \\
	& \gtrsim \sum_{j=m_1}^{\infty}(2^{-j}r)^{2-n} \iint_{\Gamma^{\overline \alpha}_{j-m_1\rightarrow j+m_2 } (Q)} |\nabla u|^2 dX \\
	& \gtrsim \sum_{j=m_1}^{\infty} (2^{-j}r)^{2-n}\cdot (2^{-j}r)^{-2} \iint_{\Gamma_j^{\alpha}(Q)} u^2 dX \quad\quad \text{by~} \eqref{Poincareineq} \\
	& \gtrsim \sum_{j=m_1}^{\infty} u^2(A_j),
\end{align*}
where $A_j\in \Gamma_j(Q)$ is a corkscrew point at the scale $2^{-j}r$.
In the last inequality, we use the interior corkscrew condition, thus each stripe of cone $\Gamma_j(Q)$ contains a ball of radius comparable to $2^{-j-1}r$ (as long as $\alpha$ is chosen to be big, say $\alpha>2M$, where $M$ is the corkscrew constant). By the Harnack principle $u(A_{j+1}) \geq c u(A_j)$, where $c<1$ is a constant independent of $u$ and $j$. Thus
\[  \sum_{j=m_1}^{\infty} u^2(A_j) \gtrsim u^2(A_{m_1}) \gtrsim u^2(A_1). \]
Recall for any $Q\in \Delta'/2$, the point $A_1= A_1(Q)$ is a corkscrew point in $\Gamma_1(Q)$. Let $A'$ be the corkscrew point in $T(\Delta'/2)$, again by the Harnack principle we have $u(A') \approx u(A_1)$. 
Therefore
\[ |S_{r/2}^{\overline\alpha }(u)(Q)|^2 \gtrsim u^2(A_1) \gtrsim u^2(A'), \quad \text{for any~} Q\in \Delta'/2 . \]
Combining this with \eqref{eq:Carlcond} and \eqref{eq:CarlSquaregeq}, we get
\[ \sigma(\Delta')\|f\|_{BMO(\sigma)}^2 \gtrsim \int_{\Delta'/2} |S_{r/2}^{\overline\alpha }(u)|^2 d\sigma \gtrsim \sigma(\Delta'/2)u^2(A') \gtrsim \sigma(\Delta')u^2(A') , \]
and thus
\[ u(A') \lesssim \|f\|_{BMO(\sigma)}. \]

Let $A$ be a corkscrew point in $\Delta$. Since $\Delta$ and $\Delta'$ have the same radius $r$ and they are $r-$distance apart, we have $u(A) \approx u(A')$. By the assumption $f$ is supported on $\Delta$,
\[u(A) = \int_{\Delta} f(Q) d\omega^A(Q) = \int_{\Delta} f(Q) K(A,Q) d\omega(Q) \approx \frac{1}{\omega(\Delta)} \int_{\Delta}f d\omega. \]
The last equality uses the estimate of $K(A,Q)$ when $A$ is a corkscrew point in $T(\Delta)$ and $Q\in \Delta $ (see \cite{CBMS} Corollary 1.3.8). As a result, we proved the following estimate: Let $f$ be a non-negative continuous function supported on $\Delta$, then
\begin{equation}\label{eq:BMO}
	\frac{1}{\omega(\Delta)} \int_{\Delta}f d\omega \leq C \|f\|_{BMO(\sigma)}.
\end{equation} 

\subsection{Proof of $\omega_L\in A_{\infty}(\sigma)$}
Let $\Delta$ be a surface ball with radius $r$. For $\epsilon>0$ fixed, we want to find an $\eta= \eta(\epsilon) $, such that for any $E \subset \Delta$,
\[ \frac{\sigma(E)}{\sigma(\Delta)} <\eta \quad\text{implies} \quad \frac{\omega (E)}{\omega (\Delta)} < \epsilon. \]
In fact, since $\sigma$ and $\omega$ are Borel measures, we may assume $E$ is an open subset of $\Delta$.

Let $\delta>0$ be a small constant to be determined later, we define the function
\begin{equation}
	f(x) = \max\left\{ 0, 1+\delta \log M_{\sigma}\chi_{E}(x)\right\} \label{def:f}
\end{equation} 
where $M_{\sigma}$ is the Hardy-Littlewood maximal function with respect to $\sigma$:
\begin{equation}
	M_{\sigma}\chi_E(x) = \sup_{\widetilde \Delta \ni x} \dfrac{\sigma(\widetilde\Delta\cap E)}{\sigma(\widetilde\Delta)}. \label{def:M}
\end{equation} 
Since $(\pO, \sigma)$ is a space of homogeneous type, we can adapt the arguments in \cite{logBMO} very easily and show that $\|\log M_{\sigma}\chi_{E}\|_{BMO(\sigma)}$ is bounded by some constant $A$ (independent of the set $E$). Hence $f$ is a BMO function and $\|f\|_{BMO(\sigma)} \leq A\delta$. 
Moreover, it is clear from the definitions \eqref{def:f} and \eqref{def:M} that $0\leq f\leq 1$ and $f \equiv 1$ on the open set $E$.

Suppose $x\in\pO \setminus 2\Delta $, then $\dist(x,\Delta) \geq r$. Let $\dt$ be an arbitrary surface ball containing $x$.
Since $E\subset \Delta$, in order for $\widetilde\Delta\cap E$ to be nonempty, the diameter of $\widetilde\Delta$ is at least $r$. Thus by Ahlfors regularity $\sigma(\widetilde\Delta)\gtrsim r^{n-1}\approx \sigma(\Delta)$.
Therefore
\begin{equation}
	M_{\sigma}\chi_E(x) = \sup_{\widetilde \Delta \ni x} \dfrac{\sigma(\widetilde\Delta\cap E)}{\sigma(\widetilde\Delta)} \leq C\dfrac{\sigma(E)}{\sigma(\Delta)}. \label{est:Mchi}
\end{equation} 
This means, as long as $E\subset \Delta$ is such that 
\begin{equation}
	\frac{\sigma(E)}{\sigma(\Delta)}  <\eta(\delta) = \frac{e^{-1/\delta}}{C}, \label{conditioneta}
\end{equation} 
by \eqref{est:Mchi} we have
\[ 1+\delta \log M_{\sigma}\chi_E(x) < 1+\delta \log e^{-1/\delta} =0 \text{~outside of~} 2\Delta,\]
hence $f\equiv 0$ outside of $2\Delta$. In other words, 
\[ \frac{\sigma(E)}{\sigma(\Delta)} <\eta \implies f \text{~is supported in~} 2\Delta. \]

Next we want to use a mollification argument to approximate $f$ by continuous functions, such that their BMO norms are uniformly bounded by that of $f$. Let $\varphi$ be a radial-symmetric smooth function on $\mathbb{R}^n$ such that $\varphi = 1$ on $B_{1/2}$, $\supp \varphi\subset B_1 $ and $0\leq \varphi \leq 1$. Let 
\begin{equation}
	\varphi_{\epsilon}(z) = \frac{1}{\epsilon^{n-1}} \varphi\left(\frac{z}{\epsilon}\right), \quad f_{\epsilon}(x) = \frac{ \int_{y\in \pO} f(y) \varphi_{\epsilon} (x-y) d\sigma(y)}{\int_{y\in\pO} \ve(x-y)d\sigma(y)} \text{ for } x\in\pO . \label{deffe}
\end{equation} 
The following lemma summarizes the properties of these $f_{\epsilon}$'s. The proof of (1) is just a standard mollification argument. However it requires more work to prove (2) and (3), since $f_{\epsilon}$ is a (normalized) convolution of $f$ restricted to $\pO$, instead of all of $\mathbb{R}^n$. In particular, the proof depends on the properties of such function $f$ defined in \eqref{def:f}. We refer interested readers to Appendix \ref{sect:feprop} for the proof.

\begin{lemma}\label{lm:fe}
	The following properties hold for $f_{\epsilon}$'s:
\begin{enumerate}
	\item each $f_{\epsilon}$ is continuous, and is supported in $3\Delta$;
	\item there is a constant $C$ (independent of $\epsilon$) such that $\|f_{\epsilon}\|_{BMO(\sigma)} \leq C \|f\|_{BMO(\sigma)}$;
	\item $f(x) \leq \liminf_{\epsilon\rightarrow 0} f_{\epsilon}(x)$ for all $x$ in their support $3\Delta$. 
\end{enumerate} 
\end{lemma}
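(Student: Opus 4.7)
The plan is to treat the three claims separately, using Ahlfors regularity of $\sigma$ to keep the normalizer in \eqref{deffe} well-behaved, and adapting standard mollification arguments to the metric-measure setting $(\pO,\sigma)$.

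For claim (1), I would first bound the denominator $d(x) := \int_{\pO}\ve(x-y)\,d\sigma(y)$ uniformly from above and below: since $\varphi \geq \chi_{B_{1/2}}$ and $\pO$ is Ahlfors regular,
\begin{equation*}
d(x) \geq \frac{\sigma(B(x,\epsilon/2)\cap\pO)}{\epsilon^{n-1}} \gtrsim 1,
\end{equation*}
and the matching upper bound follows from $\varphi \leq \chi_{B_1}$ together with Ahlfors regularity. Continuity of both numerator and denominator in $x$ is a routine dominated-convergence argument applied to the uniformly continuous mollifier $\ve$. For the support, if $\dist(x,2\Delta) > \epsilon$ then $\ve(x-y) = 0$ for every $y \in \supp f \subset 2\Delta$, hence $f_\epsilon(x) = 0$; choosing $\epsilon<r$ confines $\supp f_\epsilon$ to $3\Delta$.

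For claim (3), the key observation is that on the space of homogeneous type $(\pO,\sigma)$, the maximal function $M_\sigma\chi_E$ is lower semicontinuous: the superlevel set $\{M_\sigma\chi_E > \lambda\}$ is open by the standard argument that if $M_\sigma\chi_E(x_0) > \lambda$ via a surface ball $\widetilde\Delta_0$, the same ball witnesses $M_\sigma\chi_E(y) > \lambda$ for all $y$ in a neighborhood of $x_0$. Hence $f = \max(0,1+\delta\log M_\sigma\chi_E)$ is lower semicontinuous on $\pO$. Given $x\in 3\Delta$ and $\eta>0$, pick $\epsilon_0>0$ so that $f(y) > f(x)-\eta$ for all $y\in B(x,\epsilon_0)\cap\pO$. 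Because $f_\epsilon(x)$ is a normalized weighted average of $f$ over $B(x,\epsilon)\cap\pO$, for $\epsilon<\epsilon_0$ we obtain $f_\epsilon(x) \geq f(x)-\eta$; letting $\eta\to 0$ gives the desired liminf inequality.

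Claim (2) is the main obstacle, since the bound must be in terms of $\|f\|_{BMO(\sigma)}$ rather than $\|f\|_\infty$ and must be uniform in $\epsilon$. The approach is to fix an arbitrary surface ball $\widetilde\Delta=\Delta(Q_0,s)$ and split into two regimes according as $\epsilon\leq s$ or $\epsilon>s$. When $\epsilon\leq s$: for any $x\in\widetilde\Delta$ the mollifier is supported in $B(x,\epsilon)\cap\pO \subset 2\widetilde\Delta$, so writing $c:= f_{2\widetilde\Delta}$, the uniform two-sided bound on $d(x)$ gives the pointwise estimate $|f_\epsilon(x)-c| \lesssim \fint_{B(x,\epsilon)\cap\pO}|f-c|\,d\sigma$. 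Cauchy--Schwarz, Fubini, and Ahlfors regularity then reduce the $L^2$ oscillation of $f_\epsilon$ on $\widetilde\Delta$ to an average of $|f-c|^2$ over a bounded dilate of $\widetilde\Delta$, which is in turn controlled by $\|f\|_{BMO(\sigma)}^2$. When $\epsilon>s$: any pair $x,x'\in\widetilde\Delta$ has $|x-x'|\leq 2s<2\epsilon$, and the pointwise Lipschitz bound $|\ve(x-y)-\ve(x'-y)|\lesssim |x-x'|/\epsilon^n$ combined with the uniform lower bound on $d(x)$ yields $|f_\epsilon(x)-f_\epsilon(x')|\lesssim (s/\epsilon)\fint_{B(x,2\epsilon)\cap\pO}|f-f_\epsilon(x')|\,d\sigma$, which is again controlled by $\|f\|_{BMO(\sigma)}$ using the John--Nirenberg inequality on the doubling space $(\pO,\sigma)$. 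The most delicate technical point is keeping all constants independent of both $\epsilon$ and the choice of $\widetilde\Delta$ in the two regimes simultaneously, which is exactly where passing through BMO (rather than $L^\infty$) control becomes essential.
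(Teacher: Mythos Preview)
Your proof is correct, and for parts (1), (3), and the overall structure of (2) it matches the paper closely: the paper also bounds the normalizer $d(x)$ above and below via Ahlfors regularity, proves (3) by the observation that a single witnessing ball for $M_\sigma\chi_E(x)>\lambda$ serves all nearby points (your lower-semicontinuity argument, though the paper does not name it as such), and splits (2) into the same two regimes $r_0\gtrsim\epsilon$ versus $r_0\lesssim\epsilon$. The execution of (2) differs: in both regimes the paper works directly in $L^1$ via a single Fubini step, writing $\int_{\widetilde\Delta}|f_\epsilon-\lambda|\,d\sigma \lesssim \int_{\widetilde\Delta}\int_{\pO}|f(y)-\lambda|\,\ve(x-y)\,d\sigma(y)\,d\sigma(x)$ and swapping the order of integration, then invoking the upper bound $\int_{\pO}\ve(x-y)\,d\sigma(x)\lesssim 1$ in the large-ball case and the cruder bound $\int_{\widetilde\Delta}\ve(x-y)\,d\sigma(x)\leq\sigma(\widetilde\Delta)/\epsilon^{n-1}$ in the small-ball case; this avoids Cauchy--Schwarz and John--Nirenberg entirely. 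Your Lipschitz-oscillation argument in the small-ball regime is a valid alternative and even yields a spare factor $s/\epsilon$, but John--Nirenberg is not actually needed there either: since $f_\epsilon(x')$ is itself a weighted average of $f$ over $\Delta(x',\epsilon)$, a direct $L^1$ BMO comparison of $f_\epsilon(x')$ with $f_{\Delta(x,C\epsilon)}$ suffices.
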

The last property and Fatou's lemma imply
\begin{align}
	\int_{3\Delta} f(x) d\omega(x) \leq \int_{3\Delta} \liminf_{\epsilon\rightarrow 0} f_{\epsilon}(x) d\omega(x) \leq \liminf_{\epsilon\rightarrow 0} \int_{3\Delta} f_{\epsilon}(x) d\omega(x) . \label{eq:Fatou}
\end{align} 
Since each $f_{\epsilon}$ is continuous, we can apply \eqref{eq:BMO},
\begin{equation}
	\frac{1}{\omega(3\Delta)} \int_{3\Delta} f_{\epsilon}(x) d\omega(x) \leq C \|f_{\epsilon}\|_{BMO(\sigma)}\leq C' \|f\|_{BMO(\sigma)}. \label{eq:feBMO}
\end{equation} 
Combining \eqref{eq:Fatou} and \eqref{eq:feBMO}, we get
\[ \frac{1}{\omega(3\Delta)} \int_{3\Delta} f(x) d\omega(x)\leq C' \|f\|_{BMO(\sigma)} \leq C''\delta. \]
On the other hand, since $f\geq\chi_E$ and $\omega$ is a doubling measure,
\[ \frac{1}{\omega(3\Delta)} \int_{3\Delta} f(x) d\omega(x) \geq \frac{\omega(E)}{\omega(3\Delta)} \gtrsim \frac{\omega(E)}{\omega(\Delta)}. \]
 Therefore $\omega(E)/\omega(\Delta) \leq C\delta$ as long as the condition \eqref{conditioneta}, i.e. $\sigma(E)/\sigma(\Delta)<\eta$ is satisfied. In other words, $\omega \in A_{\infty}(\sigma)$.

\section{Converse to the Carleson measure estimate}\label{sect:converse}

\begin{prop}\label{prop:reverseCarl}
Assume the elliptic measure $\omega \in A_{\infty}(\sigma)$. If $Lu=0$ in $\Omega$ with boundary data $f\in C(\pO)$, then 
\begin{equation} \label{reverseCarl}
	\|f\|_{BMO(\sigma)}^2 \leq C \sup_{\Delta\subset\pO} \frac{1}{\sigma(\Delta)}\iint_{T(\Delta)}\Carl{u} dX,
\end{equation}
as long as the right hand side is finite.
\end{prop}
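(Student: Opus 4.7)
The plan is to bound $\|f\|_{BMO(\sigma)}^2$ by controlling the local $L^2$-oscillation $\frac{1}{\sigma(\Delta)}\int_\Delta |f(Q) - u(A)|^2 d\sigma(Q)$ on every surface ball $\Delta$, where $A = A_r(Q_0)$ is an interior corkscrew point of $\Delta = \Delta(Q_0,r)$. Since
\[
\|f\|_{BMO(\sigma)}^2 \lesssim \sup_{\Delta\subset\pO} \frac{1}{\sigma(\Delta)}\int_\Delta |f(Q) - u(A_\Delta)|^2 d\sigma(Q),
\]
denoting by $\mathcal{C}$ the right-hand side of \eqref{reverseCarl}, it is enough to show that this oscillation is $\lesssim \mathcal{C}$ uniformly in $\Delta$.

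The key step is a pointwise estimate: for every $Q \in \Delta$,
\[
|f(Q) - u(A)|^2 \lesssim S_{C_0 r}^{\overline\alpha}(u)(Q)^2
\]
for some enlarged aperture $\overline\alpha > \alpha$ and truncation scale $C_0 r$. Since $\pO$ is Ahlfors regular, Theorem~\ref{ARCDC} gives Wiener regularity, hence $u \in C(\overline\Omega)$ and $u(X_k)\to f(Q)$ along any non-tangential sequence $X_k \to Q$. I choose a dyadic chain $A = X_0, X_1, X_2, \ldots$ with $X_k \in \Gamma^\alpha(Q)$ and $\delta(X_k) \approx 2^{-k}r$, and telescope
\[
f(Q) - u(A) = \sum_{k=0}^\infty \bigl(u(X_{k+1}) - u(X_k)\bigr).
\]
Each consecutive pair $X_k, X_{k+1}$ is joined by an $O(1)$-length Harnack chain of balls at scale $\approx 2^{-k}r$, all lying inside $\Gamma^{\overline\alpha}_{k-m_1\rightarrow k+m_2}(Q)$ in the notation of Section~\ref{sect:Poincare}. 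Applying Poincar\'e inequalities across this chain exactly as in the proof of Lemma~\ref{lm:Poincare} yields
\[
|u(X_{k+1}) - u(X_k)|^2 \lesssim (2^{-k}r)^{2-n}\iint_{\Gamma^{\overline\alpha}_{k-m_1\rightarrow k+m_2}(Q)} |\nabla u|^2 dX,
\]
and summing (with bounded overlap of the annular regions) gives $\sum_k |u(X_{k+1})-u(X_k)|^2 \lesssim S_{C_0 r}^{\overline\alpha}(u)(Q)^2$. To upgrade this $\ell^2$ bound on the differences to an $\ell^1$ bound on their sum, I invoke boundary oscillation decay via Theorem~\ref{thm:osc}, which under the CDC gives geometric decay $|u(X_{k+1})-u(X_k)| \lesssim 2^{-k\beta}\osc_{\pO}f$; combining this tail estimate with Cauchy--Schwarz on the head of the sum delivers the claimed pointwise bound.

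Once the pointwise bound is in hand, the Fubini/change-of-variables computation of Section~\ref{sect:prelim} (the upper bound leading to \eqref{eq:CarlesonSquareU}) applied at aperture $\overline\alpha$ yields
\[
\int_\Delta S_{C_0 r}^{\overline\alpha}(u)(Q)^2 d\sigma(Q) \lesssim \iint_{T(C_1\Delta)} \Carl{u} dX \lesssim \sigma(C_1\Delta)\, \mathcal{C} \lesssim \sigma(\Delta)\, \mathcal{C}
\]
by the Carleson hypothesis and Ahlfors regularity. Dividing by $\sigma(\Delta)$ proves \eqref{reverseCarl}.

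The main obstacle is the pointwise bound: naive telescoping gives only an $\ell^1$-sum of differences that does not immediately compare to the $\ell^2$-type square function, so one must exploit either boundary H\"older/oscillation decay (supplied by the CDC through Theorem~\ref{thm:osc}) or a Hardy-type line integral along a non-tangential curve from $A$ to $Q$. Both approaches depend on the Harnack chain condition for the chain/tube construction and mirror the geometry already developed in Section~\ref{sect:Poincare}. The $A_\infty(\sigma)$ hypothesis is retained for consistency with the main theorem, even though it is not invoked directly in this direction.
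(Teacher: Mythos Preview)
Your approach has a genuine gap at the pointwise step, and the overall strategy is quite different from the paper's.

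\textbf{The pointwise bound fails.} The inequality $|f(Q)-u(A)|^2\lesssim S^{\overline\alpha}_{C_0r}(u)(Q)^2$ is not true in general; nontangential maximal and square functions are comparable only in $L^p$ norm, not pointwise. Your proposed mechanism to rescue it breaks in two places. First, Theorem~\ref{thm:osc} does not give $|u(X_{k+1})-u(X_k)|\lesssim 2^{-k\beta}\osc_{\pO}f$. Applied at inner radius $\sim 2^{-k}r$ and outer radius $\rho$, it yields
\[
\osc_{B(Q,2^{-k}r)\cap\Omega} u \;\le\; \osc_{\overline{B(Q,2\rho)}\cap\pO} f \;+\; (\osc_{\pO} f)\cdot \Big(\tfrac{2^{-k}r}{\rho}\Big)^{\beta},
\]
and the first term, the \emph{local} oscillation of $f$, does not decay with $k$; boundary H\"older regularity \eqref{P1} applies to solutions vanishing on a surface ball, which $u-f(Q)$ does not. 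Second, even granting your tail bound, splitting the telescoping sum gives
\[
|f(Q)-u(A)| \;\lesssim\; \sqrt{N}\, S^{\overline\alpha}_{C_0r}(u)(Q) \;+\; 2^{-N\beta}\,\osc_{\pO}f,
\]
and no choice of $N$ collapses this to $C\,S^{\overline\alpha}_{C_0r}(u)(Q)$ with $C$ independent of $f$: the factor $\sqrt{N}$ grows while $\osc_{\pO}f$ is not controlled by the Carleson quantity.

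\textbf{How the paper proceeds, and why $A_\infty$ is essential.} The paper never attempts a pointwise estimate. Instead it (i) uses $\omega\in A_\infty(\sigma)$ to replace $\|f\|_{BMO(\sigma)}$ by $\|f\|_{BMO(\omega)}$; (ii) proves Theorem~\ref{thm:BMOGreen}, an $L^2(\omega)$ oscillation bound on each $\Delta$ obtained by localizing via Theorem~\ref{thm:GLT} and using the global identity $\int_{\partial\mathcal D}|u-c|^2\,d\nu \lesssim \iint_{\mathcal D}|\nabla u|^2 G_{\mathcal D}\,dY$; (iii) converts the Green-weighted integral to a $\sigma$-integral of $S_r(u)^{2p}$ via $G\approx \omega(\Delta^X)\delta^{2-n}$ and the reverse H\"older inequality for $k=d\omega/d\sigma$ (this is the second essential use of $A_\infty$); and (iv) invokes Theorem~\ref{tm:squareCarleson}, a good-$\lambda$ inequality between $S_r u$ and the Carleson-type function $Cu$, to close the estimate. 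Your remark that $A_\infty$ ``is not invoked directly'' is therefore mistaken: it is used twice, and without it the passage from $\omega$-weighted quantities back to $\sigma$-weighted ones collapses.
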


Since $\omega \in A_{\infty}(\sigma)$, a classical result in harmonic analysis says $\|f\|_{BMO(\sigma)} \approx \|f\|_{BMO(\omega)}$. We may as well prove \eqref{reverseCarl} for $\|f\|_{BMO(\omega)}$.
In light of previous work \cite{FN} and \cite{FKN}, Jerison and Kenig studied the Dirichlet problem with BMO boundary data and proved the following Theorem \ref{thm:BMOGreen} for \textit{the Laplacian on NTA domains} (see \cite{NTA} Theorem 9.6). The main ingredients of their proof are \eqref{P4}, \eqref{P5} and a geometric localization theorem, which we have shown to hold for \textit{general elliptic operators $L$ on uniform domains with Ahlfors regular boundary} in Section \ref{sect:BR}. Therefore a similar proof is applicable in our case. 

\begin{theorem}\label{thm:BMOGreen}
There exists a constant $C>0$ such that
	\begin{equation}\label{one}
		\|f\|_{BMO(\omega)}^2 \leq C \sup_{\Delta\subset\pO} \frac{1}{\omega(\Delta)}\iint_{T(\Delta)}|\nabla u|^2 G(X_0, X) dX,
	\end{equation}
	on condition that the right hand side is bounded.
\end{theorem}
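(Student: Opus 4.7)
The plan is to follow the Jerison-Kenig blueprint of Theorem 9.6 in \cite{NTA}, adapted to our non-symmetric operator $L$ on a uniform domain with Ahlfors regular boundary, relying on \eqref{P4}, \eqref{P5}, \eqref{P6}, and the geometric localization theorem \ref{thm:GLT}. First I would reduce by John-Nirenberg: since $\omega$ is doubling by \eqref{P5}, $(\pO, |\cdot|, \omega)$ is a space of homogeneous type on which the John-Nirenberg inequality for $BMO(\omega)$ holds. It therefore suffices to show that for every surface ball $\Delta=\Delta(Q_0,r)$ there is a constant $c_\Delta$ with
\[ \frac{1}{\omega(\Delta)} \int_\Delta (f - c_\Delta)^2 \, d\omega \;\leq\; C\, \sup_{\Delta'\subset\pO} \frac{1}{\omega(\Delta')} \iint_{T(\Delta')} |\nabla u|^2 G(X_0, X) \, dX. \]
The natural choice is $c_\Delta := u(A^*)$, where $A^*=A_r(Q_0)$ is a corkscrew for $\Delta$, so that $v := u - c_\Delta$ satisfies $Lv=0$ and $v(A^*)=0$.

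The main device is an $L^2$-energy identity in a localized subdomain. By Theorem \ref{thm:GLT}, fix a uniform domain $\widetilde\Omega$ with Ahlfors regular boundary satisfying $B(Q_0,r)\cap\Omega \subset \widetilde\Omega \subset B(Q_0,Cr)\cap\Omega$; write $\widetilde\omega^{A^*}$ and $\widetilde G(A^*,\cdot)$ for the $L$-elliptic measure and Green function on $\widetilde\Omega$ with pole at $A^*$. The pointwise identity $L(v^2) = -2\langle A\nabla v, \nabla v\rangle$ together with Green's representation formula on $\widetilde\Omega$ gives
\[ \int_{\partial\widetilde\Omega} v^2 \, d\widetilde\omega^{A^*} \;=\; v(A^*)^2 + 2\iint_{\widetilde\Omega} \langle A\nabla v, \nabla v\rangle \widetilde G(A^*, X) \, dX \;\lesssim\; \iint_{\widetilde\Omega} |\nabla u|^2 \widetilde G(A^*, X) \, dX, \]
where the last step uses uniform ellipticity and $v(A^*)=0$. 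It then remains to translate $\widetilde\omega^{A^*}$ and $\widetilde G(A^*,\cdot)$ back to the global quantities $\omega^{X_0}$ and $G(X_0,\cdot)$. Using the corkscrew property of $A^*$, the boundary comparison principle \eqref{P6} applied to the two non-negative solutions $X\mapsto \omega^X(E)$ and $X\mapsto \omega^X(\Delta)$, together with \eqref{P5}, yields
\[ \widetilde\omega^{A^*}(E) \;\approx\; \frac{\omega^{X_0}(E)}{\omega^{X_0}(\Delta)} \text{ for } E\subset \partial\widetilde\Omega\cap\pO, \qquad \widetilde G(A^*, X) \;\approx\; \frac{G(X_0, X)}{\omega^{X_0}(\Delta)} \text{ for } X\in \widetilde\Omega \text{ away from } A^*, \]
the second relation via \eqref{P4}. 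Restricting the boundary integral to $\Delta\subset \partial\widetilde\Omega\cap\pO$ and dividing through by $\omega^{X_0}(\Delta)$ then gives exactly the desired estimate modulo the contribution of the interior portion $\partial\widetilde\Omega\cap\Omega$.

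The main obstacle is precisely this interior-boundary contribution: on $\partial\widetilde\Omega\cap\Omega$ one does not have $v=f-c_\Delta$ but only the trace of a solution at distance $\approx r$ from $\pO$, controlled a priori only by $\osc_{T(C\Delta)} u$. The Jerison-Kenig remedy is to exploit \eqref{P2}, which gives $\widetilde\omega^{A^*}(\partial\widetilde\Omega\cap\Omega)\leq 1-c_0 <1$, so that the contribution of this part is a strict contraction of the oscillation of $u$ at a comparable scale; iterating dyadically (and using the boundary H\"older regularity \eqref{P1} of $v$ to gain a geometric factor at each step) this interior term is absorbed into a sum of Carleson quantities over dilated balls, each bounded by the supremum on the right-hand side. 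A secondary technical point is that with non-symmetric $A$ the two slots of $G$ and $\widetilde G$ play asymmetric roles, but \eqref{P4} (and the Carleson-box construction behind Theorem \ref{thm:GLT}) is designed to give the comparison we need regardless. Once these technical steps are executed exactly as in \cite{NTA} Theorem 9.6, the estimate \eqref{one} follows.
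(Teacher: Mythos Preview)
Your overall architecture matches the paper's: localize via Theorem~\ref{thm:GLT}, apply the $L^2$ energy identity in the localized domain, and transfer back to global $\omega$ and $G(X_0,\cdot)$ via \eqref{P4}--\eqref{P6}. The paper places the pole at a point $X_1\in\mathcal{D}\setminus B(Q,2r)$ and takes $c_\Delta=\int_{\partial\mathcal{D}}u\,d\nu$ (so that $u(X_1)=c_\Delta$ by the mean-value property), while you place the pole at the corkscrew $A^*$ and take $c_\Delta=u(A^*)$; these are equivalent choices.

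However, the ``main obstacle'' you identify is a phantom. In the identity
\[
\int_{\partial\widetilde\Omega} v^2\,d\widetilde\omega^{A^*}\;=\;2\iint_{\widetilde\Omega}\langle A\nabla v,\nabla v\rangle\,\widetilde G(A^*,X)\,dX,
\]
the boundary integral sits on the \emph{left}, so restricting it to $\Delta\subset\partial\widetilde\Omega\cap\pO$ only makes the left side smaller:
\[
\int_{\Delta} v^2\,d\widetilde\omega^{A^*}\;\leq\;\int_{\partial\widetilde\Omega} v^2\,d\widetilde\omega^{A^*}\;\lesssim\;\iint_{\widetilde\Omega}|\nabla u|^2\,\widetilde G(A^*,X)\,dX.
\]
This is exactly the direction you need; the contribution of $\partial\widetilde\Omega\cap\Omega$ is simply discarded, with no iteration, no use of \eqref{P2}, and no contraction argument. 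The paper's proof does precisely this in one line (equation \eqref{localize}).

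What you do omit is the genuine technical step: the comparison $\widetilde G(A^*,X)\approx G(X_0,X)/\omega(\Delta)$ breaks down in a neighborhood of the pole $A^*$, where $\widetilde G(A^*,\cdot)$ is singular but $G(X_0,\cdot)$ is bounded (since $X_0\notin\widetilde\Omega$). The paper handles this by splitting off a ball $B(X_1,\delta_1(X_1)/3)$, using the pointwise bound $G_{\mathcal D}(X_1,Y)\lesssim|Y-X_1|^{2-n}$ there together with an interior gradient estimate for $u$, and showing this local piece is also controlled by $\frac{1}{\omega(\Delta)}\iint|\nabla u|^2 G(X_0,Y)\,dY$. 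Your writeup should replace the iteration paragraph with this argument.
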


\begin{proof}
	Let $\Delta=\Delta(Q,r)$ be an arbitrary surface ball. By Theorem \ref{thm:GLT} there is a uniform domain $\mathcal{D}$ with Ahlfors regular boundary satisfying $B(Q,4r)\cap\Omega \subset \mathcal{D} \subset B(Q,4Cr)\cap\Omega$.
	 Assume $r$ is small so that $X_0 \notin B(Q, 4Cr)$. By the interior corkscrew condition of $\mathcal{D}$, we can find a point $X_1 \in \mathcal{D}\setminus B(Q,2r)$ and $\delta_1(X_1) := \dist(X_1, \partial\mathcal{D}) \approx 2r$. For the elliptic operator $L$ on $\mathcal{D}$, let $\nu$ be the elliptic measure and $G_{\mathcal{D}}(X_1,\cdot)$ the Green's function with pole at $X_1$. 
	
	We claim that for any surface ball $\Delta'\subset \Delta$.
	\begin{equation}
		\frac{\omega(\Delta')}{\omega(\Delta)} \approx \nu(\Delta'). \label{cmpon}
	\end{equation}
	In fact, let $Y_0$ and $Y'$ be corkscrew points with respect to $\Delta$ and $\Delta'$ respectively, and let $r'$ be the radius of $\Delta'$. Apply \eqref{P4} to the domains $\Omega$ and $\mathcal{D}$, we get
	\begin{equation}
		\frac{\omega(\Delta')}{\nu(\Delta')} \approx \dfrac{G(X_0,Y')(r')^{n-2}}{G_{\mathcal{D}}(X_1, Y')(r')^{n-2}} \approx \dfrac{G(X_0,Y')}{G_{\mathcal{D}}(X_1,Y')}. \label{cmpp}
	\end{equation} 
	And similarly
	\begin{equation}
		\frac{\omega(\Delta)}{\nu(\Delta)} \approx \dfrac{G(X_0,Y_0)}{G_{\mathcal{D}}(X_1,Y_0)}. \label{cmp}
	\end{equation} 
	Note that $X_0, X_1\notin B(Q,2r)\cap\Omega $, by the boundary comparison principle \eqref{P6}
	\begin{equation}
		\dfrac{G(X_0,Y')}{G_{\mathcal{D}}(X_1,Y')} \approx \dfrac{G(X_0,Y_0)}{G_{\mathcal{D}}(X_1,Y_0)}. \label{cmpG}
	\end{equation} 
	It follows from \eqref{cmpp}, \eqref{cmp} and \eqref{cmpG} that $\omega(\Delta')/\nu(\Delta') \approx \omega(\Delta)/\nu(\Delta)$.
	Since $\nu(4C\Delta) = \nu(\partial\mathcal{D}) = 1 $ and $\nu$ is a doubling measure, we have $\nu(\Delta) \approx 1$ and thus \eqref{cmpon}.
	
	The above estimate \eqref{cmpon} in particular implies
	\begin{equation}
		\frac{1}{\omega(\Delta)} \int_{\Delta} |f-c_{\Delta}|^2 d\omega \approx \int_{\Delta} |f-c_{\Delta}|^2 d\nu \leq \int_{\partial\mathcal{D} } |u-c_{\Delta}|^2 d\nu. \label{localize}
	\end{equation} 
	Here we choose the constant $c_{\Delta}= \int_{\partial\mathcal{D}} u d\nu $, so that $\int_{\partial\mathcal{D}} \left(u-c_{\Delta}\right) d\nu = 0$. We have the following global estimate on $\mathcal{D}$ (see Lemma 1.5.1 in \cite{CBMS})
	\begin{align}
		\frac{1}{2} \int_{\partial\mathcal{D}} |u-c_{\Delta}|^2 d\nu & = \iint_{\mathcal{D}} A\nabla u\cdot \nabla u ~ G_{\mathcal{D}} (X_1,Y) dY \leq \lambda \iint_{\mathcal{D}} |\nabla u|^2 G_{\mathcal{D}} (X_1,Y) dY.\label{globalL2} 
	\end{align}
	
	Using \eqref{cmp} and $\nu(\Delta)\approx 1$, we have
	\begin{equation}
		G_{\mathcal{D}}(X_1,Y_0) \approx \dfrac{G(X_0,Y_0)}{\omega(\Delta)} \nu(\Delta) \approx \dfrac{G(X_0,Y_0)}{\omega(\Delta)}. \label{eq:cmpfixed}
	\end{equation} 
	Since $X_0\notin\mathcal{D}$, by considering Harnack chains in $\mathcal{D}\setminus B(X_1, \delta_1(X_1)/3)$ or using the boundary comparison principle \eqref{P6}, \eqref{eq:cmpfixed} implies
	\[ G_{\mathcal{D}}(X_1,Y) \approx \dfrac{G(X_0,Y)}{\omega(\Delta)},\quad \text{for all~} Y\in \mathcal{D}\setminus B\Big(X_1, \frac{\delta_1(X_1)}{3}\Big). \]
	Thus 
	\begin{equation}
		\iint_{\mathcal{D}\setminus B\big(X_1, \frac{\delta_1(X_1)}{3}\big)} |\nabla u|^2 G_{\mathcal{D}} (X_1,Y) dY \approx \frac{1}{\omega(\Delta)} \iint_{\mathcal{D}} |\nabla u|^2 G (X_0,Y) dY. \label{awayX1}
	\end{equation} 
	
	On the other hand on $B(X_1,\delta_1(X_1)/3)$, by the Harnack principle
	\begin{equation}
		|\nabla u|^2 \lesssim \frac{1}{\delta_1(X_1)^n} \iint_{B\big(X_1, \frac{\delta_1(X_1)}{3}\big)} |\nabla u|^2 dY. \label{ptbyavrg}
	\end{equation} 
	By the choice of $X_1$ we know $\delta(X_1) \approx \delta_1(X_1) \approx 2r$ and $X_1\in B(Q,4Cr)$. Let $Q_{X_1}\in\pO$ satisfy $|X_1-Q_{X_1}|=\delta(X_1)$, then by properties \eqref{P4} and \eqref{P5},
	\begin{equation}
		\delta_1(X_1)^{n-2} G(X_0,Y) \approx \omega(\Delta(Q_{X_1}, \delta_1(X_1)) \approx \omega(\Delta) \label{omegaGreen}
	\end{equation} 
	for any $Y\in B(X_1,\delta_1(X_1)/3)$. Plugging \eqref{omegaGreen} into \eqref{ptbyavrg}, we get
	\begin{align}
		|\nabla u|^2 & \lesssim \frac{1}{\delta_1(X_1)^n} \frac{\delta_1(X_1)^{n-2}}{\omega(\Delta)} \iint_{B\big(X_1, \frac{\delta_1(X_1)}{3}\big)} |\nabla u|^2 G(X_0,Y) dY \nonumber \\
		& \lesssim \frac{1}{r^2 \omega(\Delta)} \iint_{B\big(X_1, \frac{\delta_1(X_1)}{3}\big)} |\nabla u|^2 G(X_0,Y) dY.\label{eq:grad}
	\end{align}
	By the maximal principle and the bound on Green's function,
	\begin{align}
		G_{\mathcal{D}}(X_1,Y) \leq G(X_1,Y) \lesssim \frac{1}{|Y-X_1|^{n-2}}.\label{eq:Green}
	\end{align}
	The last inequality is independent of $\mathcal{D}$ and $X_1$. Combining \eqref{eq:grad} and \eqref{eq:Green}, we get
	\begin{align}
		& \iint_{B\big(X_1, \frac{\delta_1(X_1)}{3}\big)} |\nabla u|^2 G_{\mathcal{D}} (X_1,Y) dY \nonumber \\
		& \qquad \qquad \lesssim \left(\frac{1}{r^2\omega(\Delta)} \iint_{B(X_1,\frac{\delta_1(X_1)}{3})} |\nabla u|^2 G(X_0,Y) dY \right) \cdot \iint_{B\big(X_1, \frac{\delta_1(X_1)}{3}\big)} \frac{1}{|Y-X_1|^{n-2} }dY\nonumber \\
		& \qquad \qquad \lesssim \left(\frac{1}{r^2\omega(\Delta)} \iint_{B(X_1,\frac{\delta_1(X_1)}{3})} |\nabla u|^2 G(X_0,Y) dY \right) \cdot \delta_1(X_1)^2 \nonumber \\
		& \qquad \qquad \lesssim \frac{1}{\omega(\Delta)} \iint_{\mathcal{D} } |\nabla u|^2 G(X_0,Y) dY \label{nearX1}.
	\end{align}
	
	Summing up \eqref{awayX1} and \eqref{nearX1}, we get
	\begin{equation}
		\iint_{\mathcal{D}} |\nabla u|^2 G_{\mathcal{D}} (X_1,Y) dY \lesssim \frac{1}{\omega(\Delta)} \iint_{\mathcal{D} } |\nabla u|^2 G(X_0,Y) dY.
	\end{equation}
	Together with \eqref{localize} and \eqref{globalL2}, we deduce	
	\begin{align*}
		\frac{1}{\omega(\Delta)} \int_{\Delta} |f-c_{\Delta}|^2 d\omega & \lesssim \frac{1}{\omega(\Delta)} \iint_{\mathcal{D}} |\nabla u|^2 G (X_0,Y) dY \\
		& \lesssim \frac{1}{\omega(4C\Delta)} \iint_{B(Q,4Cr)\cap\Omega} |\nabla u|^2 G (X_0,Y) dY \\
		& \lesssim \sup_{\Delta'\subset\pO} \frac{1}{\omega(\Delta')}\iint_{T(\Delta')}|\nabla u|^2 G(X_0, Y) dY.
	\end{align*}

\end{proof}

Recall for $X\in T(\Delta)$, the set $\Delta^X$ is defined as $\{Q\in \pO: X\in\Gamma(Q) \}$. By \eqref{DeltaX} and \eqref{P4}, \eqref{P5}, we get $G(X_0,X)\delta(X)^{n-2} \approx \omega(\Delta^X)$.
Thus by changing the order of integration,
\begin{align}
	\iint_{T(\Delta)}|\nabla u|^2 G(X_0, X) dX &\approx \iint_{T(\Delta)}|\nabla u|^2 \delta(X)^{2-n} \omega(\Delta^X) dX \nonumber \\
	& \leq \int_{Q\in (\alpha+1)\Delta} \iint_{X\in \Gamma_{\alpha r}(Q)} |\nabla u|^2 \delta(X)^{2-n} dX d\omega \nonumber \\
	& = \int_{\widetilde \Delta } S_{\alpha r}^2(u) d\omega, \label{two}
\end{align}
where $\widetilde \Delta = (\alpha+1)\Delta$. Since $\omega\in A_{\infty}(\sigma)$, there exists $q>1$ such that the Radon-Nikodym derivative $k=d\omega/d\sigma\in B_q(\sigma)$. Let $p>1$ be the conjugate of $q$, i.e. $1/p+1/q=1$, we have
\begin{align}
	\int_{\widetilde \Delta } S_{\alpha r}^2(u) d\omega  = \int_{\widetilde \Delta } S_{\alpha r}^2(u) k d\sigma & \leq   \left(\int_{\dt} k^q d\sigma\right)^{1/q} \left( \int_{\widetilde \Delta} S_{\alpha r}^{2p}(u) d\sigma \right)^{1/p} \nonumber \\
	& \leq C\sigma(\dt)^{1/q} \left(\fint_{\dt} k d\sigma\right) \left( \int_{\widetilde \Delta} S_{\alpha r}^{2p}(u) d\sigma \right)^{1/p} \nonumber \\
	& \leq C \omega(\Delta) \sup_{\Delta_{\tau}\subset\pO } \left(   \frac{1}{\sigma(\Delta_{\tau})} \int_{\Delta_{\tau}} S_{\tau}^{2p}(u) d\sigma \right)^{1/p}.  \label{three}
\end{align} 

We claim the following theorem holds for the truncated square function:
\begin{theorem}\label{tm:squareCarleson}
	For any $2<t<\infty$,
	\begin{equation} \label{four}
		\sup_{\substack{0<r<\diam\Omega \\ \Delta_{r} \subset\pO} } \left(\frac{1}{\sigma(\Delta_{r})} \int_{\Delta_r } \left|S_{r}(u)\right|^t d\sigma\right)^{1/t} \leq C  \sup_{\Delta\subset\partial\Omega} \frac{1}{\sigma(\Delta)} \iint_{T(\Delta)} \Carl{u} dX,
	\end{equation}
	on condition that the right hand side is finite. Here $\Delta_r$ denotes any surface ball of radius $r$.
\end{theorem}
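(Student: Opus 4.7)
The plan is to obtain \eqref{four} at $t = 2$ by a direct Fubini argument, then bootstrap to all $t > 2$ via a Calder\'on--Zygmund/good-$\lambda$ scheme in the spirit of John--Nirenberg for Carleson measures. Denote the right-hand side of \eqref{four} by $\mathcal{N}$ (finite by hypothesis).

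\textbf{Step 1 ($t = 2$).} For $\Delta_r = \Delta(Q_0, r)$, switching the order of integration and using Ahlfors regularity ($\sigma(\Delta^X) \approx \delta(X)^{n-1}$, as in the discussion around \eqref{DeltaX}--\eqref{eq:CarlesonSquareU}) yields
\[
\int_{\Delta_r} S_r(u)^2\, d\sigma \;=\; \iint |\nabla u(X)|^2 \delta(X)^{2-n}\, \sigma(\Delta_r \cap \Delta^X)\, dX \;\lesssim\; \iint_{T(c\Delta_r)} |\nabla u|^2 \delta\, dX \;\leq\; C\, \mathcal{N}\, \sigma(\Delta_r).
\]

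\textbf{Step 2 ($t > 2$ via good-$\lambda$).} Introduce the Carleson maximal function $\mathcal{C}u(Q) := \sup_{\Delta \ni Q}\bigl(\sigma(\Delta)^{-1} \iint_{T(\Delta)} |\nabla u|^2 \delta\, dX\bigr)^{1/2}$, so that $\|\mathcal{C}u\|_\infty \leq \mathcal{N}^{1/2}$. The key claim is: for every $\eta > 0$ there exists $\gamma = \gamma(\eta) > 0$ such that, for every surface ball $\Delta_r$ and every $\lambda > 0$,
\begin{equation}\label{goodlambdaP}
    \sigma\bigl(\{Q \in \Delta_r : S_r(u)(Q) > 2\lambda,\ \mathcal{C}u(Q) < \gamma \lambda\}\bigr) \;\leq\; \eta\, \sigma\bigl(\{Q \in 2\Delta_r : S_{2r}(u)(Q) > \lambda\}\bigr).
\end{equation}
To establish \eqref{goodlambdaP}, Whitney-decompose $O_\lambda := \{S_{2r}(u) > \lambda\} \cap 2\Delta_r$ into maximal surface balls $\{\Delta_j\}$. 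On each $\Delta_j$, split $S_r(u)(Q)^2$ for $Q \in \Delta_j$ into a ``distant'' part (cone integration outside a Carleson box $T(C\Delta_j)$), which is $\leq C\lambda^2$ via comparison to a nearby $Q_j^* \in O_\lambda^c$ with $S_{2r}(u)(Q_j^*) \leq \lambda$, and a ``local'' part $\tilde S_j(u)(Q)^2 := \iint_{\Gamma(Q) \cap T(C\Delta_j)} |\nabla u|^2 \delta^{2-n} dX$. Applying the Step 1 estimate to $\tilde S_j$ together with $\mathcal{C}u(Q) < \gamma \lambda$ gives $\int_{\Delta_j} \tilde S_j^2\, d\sigma \lesssim (\gamma\lambda)^2 \sigma(\Delta_j)$; Chebyshev then yields $\sigma\{Q \in \Delta_j : \tilde S_j(u)(Q) > \lambda\} \lesssim \gamma^2 \sigma(\Delta_j)$. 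Choosing $\gamma$ with $C\gamma^2 < \eta$ and summing over $j$ proves \eqref{goodlambdaP}.

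Multiplying \eqref{goodlambdaP} by $t \lambda^{t-1}$ and integrating in $\lambda$ gives, via the layer-cake formula,
\[
\int_{\Delta_r} S_r(u)^t\, d\sigma \;\leq\; \eta\, 2^t \int_{2\Delta_r} S_{2r}(u)^t\, d\sigma \;+\; C_{t,\gamma}\, \mathcal{N}^{t/2}\, \sigma(\Delta_r).
\]
Set $F := \sup_{\Delta_r}\bigl(\sigma(\Delta_r)^{-1} \int_{\Delta_r} S_r(u)^t\, d\sigma\bigr)^{1/t}$, where the supremum runs over all surface balls. Dividing the above by $\sigma(\Delta_r)$ and using $\sigma(2\Delta_r) \approx \sigma(\Delta_r)$ (Ahlfors regularity), one obtains $F^t \leq C\eta\, F^t + C_{t,\gamma}\, \mathcal{N}^{t/2}$. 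Taking $\eta$ small enough that $C\eta \leq 1/2$ and absorbing yields $F \lesssim_t \mathcal{N}^{1/2}$, which is \eqref{four}.

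\textbf{Main obstacle.} The delicate step is the good-$\lambda$ inequality \eqref{goodlambdaP} in the rough uniform-domain setting: the splitting into ``distant'' and ``local'' cone contributions must be set up so that the distant part is genuinely controlled at the parent Whitney scale through comparison with a point $Q_j^* \in O_\lambda^c$. This comparison uses the interior corkscrew and Harnack chain conditions (as in the Poincar\'e argument of Section \ref{sect:Poincare}) to ensure the oscillation of the cone integrand between $Q$ and $Q_j^*$ over the distant region is negligible. A secondary technical point is securing $F < \infty$ a priori for the absorption step; this is handled by first working with a vertical truncation $S_r^{(\rho)}(u)(Q) := \bigl(\iint_{\Gamma_r(Q) \cap \{\delta > \rho\}} |\nabla u|^2 \delta^{2-n} dX\bigr)^{1/2}$, for which $F < \infty$ trivially, running the scheme for $S_r^{(\rho)}$, and then letting $\rho \downarrow 0$ by monotone convergence.
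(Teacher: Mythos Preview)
Your overall scheme matches the paper's: a good-$\lambda$ inequality for the truncated square function against the Carleson maximal function, integrated in $\lambda$ to produce a self-improving estimate that closes. But two technical points in your write-up do not work as stated, and your closing device differs from the paper's.

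\textbf{Aperture.} In your good-$\lambda$ claim \eqref{goodlambdaP} you keep the \emph{same} aperture $\alpha$ on both sides. The comparison of the ``distant'' part of $S_r u(Q)$ with $S_{2r}u(Q_j^*)$ does \emph{not} hold for equal apertures: if $X\in\Gamma_r^{\alpha}(Q)$ with $Q\in\Delta_j$ and $\delta(X)\gtrsim r_j$, you only get $|X-Q_j^*|\le \alpha\delta(X)+Cr_j\le \overline\alpha\,\delta(X)$ for some $\overline\alpha>\alpha$, so the distant cone lands in $\Gamma^{\overline\alpha}$, not $\Gamma^{\alpha}$. This is exactly why the paper's Lemma \ref{lm:goodlambda} has $S_{4r}^{\overline\alpha}$ on the right, and then invokes a separate aperture-change lemma (Lemma \ref{lemma:diffapper}) to pass from $S^{\overline\alpha}$ back to $S^{\alpha}$ at the cost of enlarging the ball. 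Your ``Main obstacle'' paragraph misdiagnoses this step: the distant-part comparison is a purely elementary cone-inclusion argument and uses neither the corkscrew nor the Harnack chain condition; what it \emph{does} need is the aperture enlargement plus an aperture-change estimate. Relatedly, the paper's proof of the good-$\lambda$ splits based on $\delta(X)\gtrless r_j$ rather than membership in $T(C\Delta_j)$, and treats separately the Whitney pieces $\Delta_k$ whose nearest point of $\mathcal O^c$ lies outside $4\Delta$ (Case~2); you do not address this case.

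\textbf{Closing the iteration.} After the good-$\lambda$ and aperture change, both you and the paper arrive at a contraction of the form $\fint_{\Delta}|S_r u|^t\,d\sigma \le \tfrac14\fint_{A\Delta}|S_{Ar}u|^t\,d\sigma + C\,\mathscr C(u)^t$. You take a sup and absorb, justifying finiteness a priori by a vertical truncation $S_r^{(\rho)}$ and monotone convergence. The paper instead iterates the contraction \emph{finitely many times} until the scale reaches $\diam\Omega$, and then invokes the global estimate $\int_{\partial\Omega}|Su|^t\,d\sigma\le C\,\mathscr C(u)^t\,\sigma(\partial\Omega)$ from \cite{HACAD} to terminate (see \eqref{ineq:avrg}--\eqref{eq:CCLsimplified}). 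Your absorption-via-truncation is a legitimate alternative and more self-contained, but you should check that the good-$\lambda$ argument (with the aperture correction above) goes through verbatim for $S_r^{(\rho)}$; it does, since the extra truncation $\{\delta>\rho\}$ is preserved under all the cone inclusions used.
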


Assume this theorem holds, then \eqref{reverseCarl} follows from combining \eqref{one}, \eqref{two}, \eqref{three} and \eqref{four}, which concludes the proof of Proposition \ref{prop:reverseCarl}. 
Now we are going to prove this theorem by combining several lemmas of the truncated square function. In \cite{HACAD}, the authors have proved similar lemmas for the square function (see Proposition 4.5, Lemma 4.6 and Lemma 6.2), and we are adapting their arguments to the \textit{truncated square function}. The proof of the following two lemmas is similar to the case of non-truncated square function, so we postpone it to Appendix \ref{sect:trsq}.

\begin{lemma} \label{lemma:open}
	For any $r,\lambda>0$, the set $\{Q\in\pO: S_r u(Q) >\lambda\}$ is open in $\pO$.
\end{lemma}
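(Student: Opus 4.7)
The approach is to show lower semi-continuity of $S_r u$, from which openness of the super-level set $\{S_r u > \lambda\}$ follows immediately. Fix $Q_0 \in \pO$ with $S_r u(Q_0) > \lambda$. The plan is to find an inner approximation $K_{\epsilon} \subset \Gamma_r(Q_0)$ which captures most of the mass of the integral defining $S_r^2 u(Q_0)$, and which is simultaneously contained in $\Gamma_r(Q)$ for every boundary point $Q$ sufficiently close to $Q_0$.

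For $\epsilon \in (0,1)$, I would define the shrunken truncated cone
\[
K_{\epsilon} = \bigl\{X \in \Omega : |X-Q_0| \leq (\alpha - \epsilon)\delta(X),\ \delta(X) \geq \epsilon,\ |X-Q_0| \leq r-\epsilon\bigr\}.
\]
These sets are nested and increasing as $\epsilon \downarrow 0$, and their union differs from $\Gamma_r(Q_0)$ only by the Lebesgue null set where $|X-Q_0| = \alpha \delta(X)$ or $|X-Q_0|=r$. By monotone convergence applied to the nonnegative integrand $|\nabla u|^2 \delta(X)^{2-n}$,
\[
\iint_{K_{\epsilon}} |\nabla u|^2 \delta(X)^{2-n}\, dX \;\xrightarrow[\epsilon \downarrow 0]{}\; S_r^2 u(Q_0) > \lambda^2,
\]
so we may fix $\epsilon$ small enough that the left side already exceeds $\lambda^2$.

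Next I would verify that for every $Q \in \pO$ with $|Q - Q_0| < \epsilon^2$, one has $K_{\epsilon} \subset \Gamma_r(Q)$. Indeed, if $X \in K_\epsilon$, then using $\delta(X) \geq \epsilon$,
\[
|X-Q| \leq |X-Q_0| + |Q-Q_0| \leq (\alpha - \epsilon)\delta(X) + \epsilon^2 \leq (\alpha-\epsilon)\delta(X) + \epsilon\,\delta(X) = \alpha\,\delta(X),
\]
and $|X-Q| \leq (r-\epsilon) + \epsilon^2 < r$, so $X \in \Gamma_r(Q)$. Consequently,
\[
S_r^2 u(Q) \;\geq\; \iint_{K_{\epsilon}} |\nabla u|^2 \delta(X)^{2-n}\, dX \;>\; \lambda^2
\]
for all $Q$ in the surface ball $\Delta(Q_0, \epsilon^2)$, which proves that $\{S_r u > \lambda\}$ is open in $\pO$.

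There is no substantive obstacle here; the only care needed is in (i) choosing the approximation $K_\epsilon$ to be simultaneously interior to the aperture condition, bounded away from the boundary of $\Omega$, and interior to the truncation ball, so that the triangle inequality propagates all three conditions to nearby cones; and (ii) confirming that the boundary discrepancies between $\bigcup_\epsilon K_\epsilon$ and $\Gamma_r(Q_0)$ are Lebesgue null, so monotone convergence recovers $S_r^2 u(Q_0)$ exactly.
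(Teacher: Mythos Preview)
Your proof is correct and takes a genuinely different route from the paper's. The paper fixes a height $\eta$ so that most of the integral defining $S_r^2 u(Q)$ lives on $\Gamma_r(Q)\setminus B(Q,\eta)$, then directly estimates the \emph{symmetric difference} between this set and $\Gamma_r(P)\setminus B(P,\eta)$ for nearby $P$: that symmetric difference is contained in three thin regions (two annuli and a shell near the cone wall), and the coarea formula is invoked to show the shell $\{(1-2\epsilon)\alpha\delta < |X-Q| \le (1+\epsilon)\alpha\delta\}$ has small measure. Your inner-approximation argument bypasses all of that geometry: by shrinking the aperture, the truncation radius, and imposing $\delta\ge\epsilon$ simultaneously, the triangle inequality gives $K_\epsilon\subset\Gamma_r(Q)$ for nearby $Q$ in one line, and monotone convergence does the rest. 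This is cleaner and more elementary.

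The one point that deserves an actual sentence rather than the parenthetical in your item (ii) is why the set $\{X:|X-Q_0|=\alpha\,\delta(X)\}$ is Lebesgue null; level sets of Lipschitz functions need not be null in general. Here it follows because $\alpha>1$: since $|\nabla\delta|=1$ a.e.\ in $\Omega$ and $|\nabla|X-Q_0||=1$ for $X\neq Q_0$, the function $h(X)=|X-Q_0|-\alpha\,\delta(X)$ satisfies $|\nabla h|\ge \alpha-1>0$ a.e., so $\{h=0\}$ is null (e.g.\ by the coarea formula, $\int_{\{h=0\}}|\nabla h|\,dX=0$). This is exactly the fact the paper is using, in disguise, when it controls the measure of the shell $V_3$; you just need it at a single level rather than across an interval. (The set $\{|X-Q_0|=r\}$ is a sphere and in any case is already excluded from $\Gamma_r(Q_0)$ since $B(Q_0,r)$ is open.)
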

\begin{lemma} \label{lemma:diffapper}
	Let $2<t<\infty$. Assume $\overline\alpha$ is an aperture bigger than $\alpha$ and $\Delta= \Delta(Q_0,r)$ is a surface ball of radius $r$, then
	\[ \int_{\Delta} |S_r^{\overline\alpha} u(Q) |^t d\sigma(Q) \leq \int_{2(\alpha+1) \Delta} |S_{2(\alpha+1) r} u(Q)|^t d\sigma(Q). \]
\end{lemma}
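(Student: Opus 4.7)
The plan is to prove the lemma via a duality argument combined with two Fubini interchanges, with the hypothesis $t>2$ entering through the $L^{t/(t-2)}$-boundedness of the Hardy--Littlewood maximal function $M_\sigma$ on the Ahlfors regular (hence doubling) space $(\pO,\sigma)$. Since $(t/2)'=t/(t-2)>1$, duality gives
\[
\|S^{\overline\alpha}_r u\|_{L^t(\Delta)}^2 = \sup\Big\{\int_\Delta (S^{\overline\alpha}_r u)^2 g\,d\sigma:\, g\geq 0,\ \supp g\subset\Delta,\ \|g\|_{L^{t/(t-2)}(\sigma)}\leq 1\Big\}.
\]
Expanding $(S^{\overline\alpha}_r u(Q))^2=\iint_{\Gamma^{\overline\alpha}_r(Q)}|\nabla u|^2\delta^{2-n}dX$ and interchanging order of integration,
\[
\int_\Delta (S^{\overline\alpha}_r u)^2 g\,d\sigma = \iint_\Omega|\nabla u(X)|^2\delta(X)^{2-n}\,\widetilde g(X)\,dX, \qquad \widetilde g(X):=\int_{\{Q\in\Delta:\,X\in\Gamma^{\overline\alpha}_r(Q)\}}g\,d\sigma.
\]

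Next I would bound $\widetilde g$ and then apply Fubini a second time. The set in the definition of $\widetilde g(X)$ sits inside $A^{\overline\alpha}(X):=\{Q'\in\pO:X\in\Gamma^{\overline\alpha}(Q')\}=B(X,\overline\alpha\delta(X))\cap\pO$, which by Ahlfors regularity has $\sigma$-measure $\approx\delta(X)^{n-1}$. Since $\fint_{A^{\overline\alpha}(X)}g\,d\sigma\leq C\,M_\sigma g(Q')$ for every $Q'\in A^{\overline\alpha}(X)$ (as $A^{\overline\alpha}(X)$ is contained in a surface ball centred at $Q'$ of comparable radius), and since $A^\alpha(X)\subset A^{\overline\alpha}(X)$ because $\alpha<\overline\alpha$, averaging yields
\[
\widetilde g(X)\leq\sigma(A^{\overline\alpha}(X))\inf_{Q'\in A^{\overline\alpha}(X)}M_\sigma g(Q')\lesssim\delta(X)^{n-1}\fint_{A^\alpha(X)}M_\sigma g\,d\sigma.
\]
Substituting and using $\delta(X)^{n-1}/\sigma(A^\alpha(X))\approx 1$ (Ahlfors) to restore the weight $\delta(X)^{2-n}$ inside the resulting expression, a second Fubini swap gives
\[
\iint_\Omega|\nabla u|^2\delta^{2-n}\widetilde g(X)\,dX\lesssim\int_\pO M_\sigma g(Q')\iint_{\Gamma^\alpha(Q')}|\nabla u|^2\delta^{2-n}\,dX\,d\sigma(Q').
\]
Triangle-inequality bookkeeping (for $X\in\Gamma^{\overline\alpha}_r(Q)$ with $Q\in\Delta$ and $Q'\in A^\alpha(X)$ one has $|X-Q'|\leq\alpha\delta(X)\leq\alpha r$ and $|Q'-Q_0|\leq(\alpha+2)r\leq 2(\alpha+1)r$) confines $Q'$ to $2(\alpha+1)\Delta$ and forces $X\in\Gamma^\alpha_{2(\alpha+1)r}(Q')$, so the inner double integral is at most $(S^\alpha_{2(\alpha+1)r}u(Q'))^2$.

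To conclude, Hölder's inequality with conjugate exponents $t/(t-2)$ and $t/2$ gives
\[
\int_{2(\alpha+1)\Delta}M_\sigma g\cdot(S^\alpha_{2(\alpha+1)r}u)^2\,d\sigma \leq \|M_\sigma g\|_{L^{t/(t-2)}(2(\alpha+1)\Delta)}\|S^\alpha_{2(\alpha+1)r}u\|_{L^t(2(\alpha+1)\Delta)}^2.
\]
Since $t/(t-2)>1$, $M_\sigma$ is bounded on $L^{t/(t-2)}$, so $\|M_\sigma g\|_{L^{t/(t-2)}}\lesssim\|g\|_{L^{t/(t-2)}}\leq 1$. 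Taking the supremum over admissible $g$ produces $\|S^{\overline\alpha}_r u\|_{L^t(\Delta)}^2\lesssim\|S^\alpha_{2(\alpha+1)r}u\|_{L^t(2(\alpha+1)\Delta)}^2$, and raising to the $t/2$ power yields the lemma.

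The main technical point in this plan is the manoeuvre in the second paragraph: the trivial inequality $\inf\leq\fint$ together with the strict aperture inequality $\overline\alpha>\alpha$ (so that $A^\alpha(X)\subsetneq A^{\overline\alpha}(X)$) lets one pass from the bound $\fint g\leq M_\sigma g(Q')$ (valid on every $Q'\in A^{\overline\alpha}(X)$) to an average of $M_\sigma g$ over the smaller ball $A^\alpha(X)$, which after the second Fubini reconstructs exactly $(S^\alpha u(Q'))^2$. Beyond that observation, the argument is routine: two Fubini swaps, the Ahlfors identity $\sigma(A^\gamma(X))\approx\gamma^{n-1}\delta(X)^{n-1}$, and $L^{t/(t-2)}$-boundedness of $M_\sigma$.
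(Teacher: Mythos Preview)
Your proposal is correct and follows essentially the same route as the paper: duality against $g\in L^{t/(t-2)}$, a first Fubini to rewrite $\int_\Delta (S^{\overline\alpha}_r u)^2 g\,d\sigma$ as a domain integral, a pointwise bound replacing the $g$-average over the large-aperture base by an $M_\sigma g$-average over the small-aperture base, a second Fubini producing $(S^\alpha u)^2$ against $M_\sigma g$, and finally H\"older plus the $L^{t/(t-2)}$-boundedness of $M_\sigma$. The only cosmetic difference lies in the mechanism for the middle step: the paper introduces the averaging operator $A_s\psi(Q)=s^{1-n}\int_{\Delta(Q,s)}\psi\,d\sigma$ and proves the iterated-average inequality $A_s(A_{\beta s}\psi)\gtrsim A_{(\beta-1)s}\psi$ to pass from $A_{(\overline\alpha+1)\delta(X)}\psi$ to $A_{(\alpha-1)\delta(X)}M\psi$, whereas you obtain the same conclusion more directly via $\fint_{A^{\overline\alpha}(X)}g\leq C\,M_\sigma g(Q')$ for every $Q'\in A^{\overline\alpha}(X)\supset A^\alpha(X)$ followed by $\inf\leq\fint$; both arguments are equivalent.
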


Moreover, we have the following ``good-$\lambda$" inequality between $S_{r}u$ and the Carleson type function 
\begin{equation}
	Cu(Q) = \sup_{\Delta\ni Q} \frac{1}{\sigma(\Delta)}\iint_{\Gamma(Q)} \Carl{u} dX. \label{eq:Cfunc}
\end{equation} 

\begin{lemma}\label{lm:goodlambda}
	There exist an aperture $\overline \alpha>\alpha$ and a constant $C>0$, such that for any surface ball $\Delta = \Delta(Q_0, r)$ and any $\lambda,\gamma>0$
	\begin{align}
		& \sigma\Big(\big\{Q\in\Delta: S_r u(Q) > 2\lambda, Cu(Q) \leq \gamma\lambda\big\}\Big)  \leq C\gamma^2 \sigma\Big(\big\{Q\in 4\Delta: S_{4r}^{\overline\alpha}u(Q) >\lambda\big\}\Big) \label{goodlambda}
	\end{align}
\end{lemma}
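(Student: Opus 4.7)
The plan is a standard good-$\lambda$ argument via Whitney decomposition. Write $E$ for the left-hand set and $F^* = \{Q \in \pO : S_{4r}^{\overline\alpha} u(Q) > \lambda\}$, so that $F = F^* \cap 4\Delta$. For any $\overline\alpha \geq \alpha$, the inclusion $\Gamma_r(Q) \subset \Gamma_{4r}^{\overline\alpha}(Q)$ yields $E \subset F^*$ automatically. Since $F^*$ is open by Lemma~\ref{lemma:open}, I would Whitney-decompose it into a family of surface balls $\{B_i\}$ of radii $r_i \approx \dist(B_i,\pO\setminus F^*)$ with bounded overlap, then reduce to the local estimate $\sigma(E \cap B_i) \leq C\gamma^2 \sigma(B_i)$. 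Summing over those $B_i$ that meet $\Delta$ and using bounded overlap (together with routine bookkeeping for the few Whitney balls with $r_i$ comparable to $r$) gives $\sigma(E) \lesssim \gamma^2 \sigma(F)$.

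For such a $B_i$, pick a Whitney point $Q_i^* \in \pO \setminus F^*$ with $\dist(Q_i^*,B_i) \approx r_i$, so that $S_{4r}^{\overline\alpha} u(Q_i^*) \leq \lambda$. For $Q \in E \cap B_i$, I would split the truncated cone as $\Gamma_r(Q) = \Gamma^H(Q) \cup \Gamma^L(Q)$ at height $c r_i$. The triangle inequality gives, for $X \in \Gamma^H(Q)$,
\[
|X - Q_i^*| \leq |X - Q| + |Q - Q_i^*| \leq \alpha \delta(X) + O(r_i) \leq \bigl(\alpha + O(c^{-1})\bigr) \delta(X),
\]
and also $|X - Q_i^*| < 4r$ since $r_i \lesssim r$. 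Choosing $\overline\alpha$ large enough in terms of $\alpha$, $c$ and the Whitney constants, this shows $\Gamma^H(Q) \subset \Gamma_{4r}^{\overline\alpha}(Q_i^*)$, so the high-part contribution to $S_r^2 u(Q)$ is at most $\bigl(S_{4r}^{\overline\alpha} u(Q_i^*)\bigr)^2 \leq \lambda^2$. Since $S_r^2 u(Q) > 4\lambda^2$, the low part
\[
\mathrm{II}(Q) := \iint_{\Gamma^L(Q)} |\nabla u|^2 \delta(X)^{2-n} \, dX
\]
must satisfy $\mathrm{II}(Q) > 3\lambda^2$.

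To finish, I would integrate this pointwise lower bound over $E \cap B_i$, swap the order of integration by Fubini, and use the Ahlfors-regular estimate $\sigma(\{Q : X \in \Gamma(Q)\}) \lesssim \delta(X)^{n-1}$ to absorb the weight $\delta^{2-n}$ into a plain $\delta$:
\[
3\lambda^2 \sigma(E \cap B_i) \leq \int_{E \cap B_i} \mathrm{II}(Q)\, d\sigma(Q) \lesssim \iint_{T(C \Delta_i)} |\nabla u|^2 \delta(X)\, dX,
\]
where $\Delta_i = \Delta(x_i, r_i)$ is concentric with $B_i$ (the spatial region supporting the integrand is trapped inside $B(x_i, C r_i) \cap \{\delta < c r_i\}$ by the definition of $\Gamma^L$). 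Since every $Q \in E \cap B_i$ belongs to $C\Delta_i$ and satisfies $Cu(Q) \leq \gamma\lambda$, the right-hand side is bounded by $(\gamma\lambda)^2 \sigma(C\Delta_i) \lesssim \gamma^2 \lambda^2 \sigma(B_i)$, once the Carleson functional $Cu$ is read on its natural quadratic scale (so that $Cu(Q) \leq \gamma\lambda$ translates into a Carleson density of order $(\gamma\lambda)^2$, consistent with the rest of Section~\ref{sect:converse}). Dividing by $3\lambda^2$ yields the local estimate and the lemma follows. The main subtlety I anticipate is the uniform geometric choice of $\overline\alpha$ so that the cone translation $\Gamma^H(Q) \subset \Gamma_{4r}^{\overline\alpha}(Q_i^*)$ holds across all Whitney scales $r_i \lesssim r$ at once; everything else is standard bookkeeping.
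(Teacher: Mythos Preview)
Your proposal is correct and follows the same good-$\lambda$ strategy as the paper: Whitney-decompose the super-level set of $S_{4r}^{\overline\alpha}u$, split each truncated cone at the Whitney height, absorb the high part into the reference cone $\Gamma_{4r}^{\overline\alpha}(Q_i^*)$, and control the low part via the Carleson functional $Cu$. The only cosmetic difference is that the paper Whitney-decomposes $\{S_{4r}^{\overline\alpha}u>\lambda\}\cap 4\Delta$ rather than the global super-level set, which trades your deferred ``routine bookkeeping for the few Whitney balls with $r_i$ comparable to $r$'' for an explicit second case (nearest point of $\mathcal{O}^c$ lies in $(4\Delta)^c$, forcing $d(Q_k)>2r$ so the high part is empty) and makes the final summation $\sum_k\sigma(\Delta_k)\leq\sigma(F)$ immediate.
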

\begin{proof}
	Let $\overline\alpha=\alpha+3$. Consider the open set $\mathcal{O} = \{ Q\in 4\Delta : S_{4r}^{\overline\alpha}u(Q) >\lambda\}$. 
	Similar to Lemma 3.7 and Lemma 6.2 in \cite{HACAD}, let $\cup_{k}\Delta_k$ be a Whitney decomposition of $\mathcal{O} $, such that 
	\[ \text{for each } k, \quad \Delta\big(Q_k,\frac{1}{24} d(Q_k)\big) \subset \Delta_k \subset \Delta\big(Q_k, \frac{1}{2} d(Q_k)\big). \]
	Here $Q_k\in \mathcal{O} $, and $d(Q_k) = \dist(Q_k, \mathcal{O}^{c} )>0$. 
	We claim that for all $\Delta_k$ such that $\Delta_k \cap \Delta \neq \emptyset$, we have
	\begin{equation}\label{eq:breakpart}
		\sigma\Big(\big\{Q\in\Delta_k : S_r u(Q) > 2\lambda, Cu(Q) \leq \gamma\lambda\big\}\Big) \leq C \gamma^2 \sigma(\Delta_k).
	\end{equation}
	This is clearly true if the left hand side is empty. Assume it is not empty, and
	\begin{equation}
		\text{there is some } Q'_k \in \big\{Q\in\Delta_k: S_r u(Q) > 2\lambda, Cu(Q) \leq \gamma\lambda\big\}. \label{Qkprime}
	\end{equation} 
		
	
	Note that 
	\begin{align*}
		d(Q_k) = \dist(Q_k,\mathcal{O}^c)  = \min\Big\{\dist\big(Q_k, \big\{Q\in 4\Delta: S_{4r}^{\overline\alpha}u(Q) \leq \lambda\big\}\big), \dist(Q_k, (4\Delta)^c) \Big\}, 
	\end{align*} 
	we need to consider two cases.
	
	\textit{Case 1.} Assume $Q_k$ is such that 
	\[ d(Q_k) = |Q_k - P_k|, \quad\text{for some~} P_k\in 4\Delta \text{~satisfying~} S^{\overline\alpha}_{4r} u(P_k) \leq \lambda. \]
	Let $Q\in\Delta_k $ be arbitrary, recall that $\Delta_k\subset\Delta(Q_k, d(Q_k)/2)$, hence
	\[ |Q-P_k| \leq |Q-Q_k| + |Q_k-P_k| < \frac 1 2 d(Q_k) + d(Q_k) = \frac{3}{2} d(Q_k). \]
	For any $X\in \Gamma_r(Q)$, we define the functions 
	\begin{equation}\label{def:uonetwo}
		u_1(X) = u(X) \chi_{\{\delta(X) \geq d(Q_k)/2\}}, \text{ and }u_2(X) = u(X) \chi_{\{\delta(X) < d(Q_k)/2\}}. 
	\end{equation}  
	Clearly $S_r u(Q) \leq S_r u_1(Q) + S_r u_2(Q)$.
	
	 If $X\in \Gamma_r(Q)$ is such that $\delta(X) \geq d(Q_k)/2$, we have
	\[ |X-P_k| \leq |X-Q| + |Q-P_k| < \alpha \delta(X) + \frac{3}{2} d(Q_k) \leq \overline{\alpha} \delta(X), \]
	and 
	\[ |X-P_k|\leq |X-Q| + |Q-P_k| < r + 3\delta(X) \leq 4r. \]
	In other words $X\in \Gamma^{\overline\alpha}_{4r}(P_k)$. Hence
	\begin{align}
		|S_r u_1(Q)|^2 & = \iint_{\Gamma_r (Q)\cap \{\delta(X) \geq d(Q_k)/2\}} |\nabla u|^2 \delta(X)^{2-n} dX \nonumber \\
		& \leq \iint_{\Gamma_{4r}^{\overline\alpha}(P_k)} |\nabla u|^2 \delta(X)^{2-n} dX \nonumber \\
		& = |S_{4r}^{\overline\alpha} u(P_k)|^2  \nonumber \\
		& \leq \lambda^2. \label{eq:uone}
	\end{align} 
	
	If $X\in \Gamma_r(Q)$ is such that $\delta(X) < d(Q_k)/2$, recall $Q \in \Delta_k\subset \Delta(Q_k, d(Q_k)/2)$ and \eqref{Qkprime}, we have 
	\begin{align*}
		|X-Q'_k| & \leq |X-Q|+|Q-Q_k|+|Q_k - Q'_k|\\
		& \leq \alpha\delta(X) + \frac 1 2 d(Q_k) + \frac 1 2 d(Q_k) \\
		& < \left(\frac{\alpha}{2} +1\right) d(Q_k).
	\end{align*}  
	Hence
	\begin{align}
		\int_{\Delta_k} S_r^2 u_2(Q) d\sigma & = \int_{\Delta_k} \iint_{\Gamma_r(Q) \cap \{\delta(X) < d(Q_k)/2\}} |\nabla u|^2 \delta(X)^{2-n} dX \nonumber \\
		 & \leq \iint_{B(Q'_k, \left(\frac{\alpha}{2} +1\right) d(Q_k))} \Carl{u} dX \nonumber \\
		& \leq |Cu(Q'_k)|^2 \cdot \sigma\left(\Delta\left(Q'_k, \left(\frac{\alpha}{2} +1\right) d(Q_k)\right)\right), \label{eq:tmp} 
	\end{align}
	where the Carleson type function is defined in \eqref{eq:Cfunc}. By \eqref{Qkprime}, we know that $Cu(Q'_k)\leq \gamma\lambda$. In addition, $\sigma$ is Ahlfors regular and $\Delta(Q_k,d(Q_k)/24) \subset \Delta_k$. Therefore it follows from \eqref{eq:tmp}
	\begin{equation}
		\int_{\Delta_k} S_r^2 u_2(Q) d\sigma  \leq \gamma^2 \lambda^2 \cdot C_2 \left(\frac{\alpha}{2}+1\right)^{n-1} d(Q_k)^{n-1} \leq C\gamma^2\lambda^2 \sigma(\Delta_k), \label{eq:secondu}
	\end{equation}
	where the constant $C$ only depends on the aperture $\alpha$ and the Ahlfors regular constants of $\sigma$.
	On the other hand, 
	\[ \int_{\Delta_k} S_r^2 u_2(Q) d\sigma \geq \lambda^2 \sigma\Big(\big\{Q\in\Delta_k: S_r u_2(Q) > \lambda\big\}\Big), \]
	hence $\sigma\Big(\big\{Q\in\Delta_k: S_r u_2(Q) > \lambda\big\}\Big) \leq C\gamma^2 \sigma(\Delta_k)$.
	
	Recall $S_r u_1(Q) \leq \lambda$ for all $Q\in\Delta_k$ (see \eqref{eq:uone}), therefore
	\begin{align*}
	    \sigma\Big(\big\{Q\in\Delta_k: S_r u(Q) > 2\lambda, Cu(Q) \leq \gamma\lambda \big\}\Big) & \leq \sigma\Big(\big\{Q\in\Delta_k: S_r u_2(Q) > \lambda, Cu(Q) \leq \gamma\lambda \big\}\Big) \\
		&  \leq \sigma\Big(\big\{Q\in\Delta_k: S_r u_2(Q) > \lambda\big\}\Big) \\
		&  \leq C\gamma^2 \sigma(\Delta_k). 
	\end{align*} 
	
	\textit{Case 2.} Assume $Q_k$ is such that $d(Q_k) = \dist(Q_k, (4\Delta)^c)$. 
	We only consider the $\Delta_k$'s such that $\Delta_k\cap\Delta \neq\emptyset$, and assume $R_k$ is a point in the intersection. In particular $R_k\in\Delta_k\subset \Delta(Q_k, d(Q_k)/2)$. So
	\[ \dist(R_k, (4\Delta)^c) \leq |R_k - Q_k| + \dist(Q_k, (4\Delta)^c) < \frac 1 2 d(Q_k) + d(Q_k) = \frac 3 2 d(Q_k). \]
	On the other hand, suppose $\dist(R_k, (4\Delta)^c) = |R_k - R|$ for some $R\in (4\Delta)^c$, then
	\[ \dist(R_k, (4\Delta)^c) \geq |R- Q_0|- |R_k - Q_0|> 4r - r = 3r. \]
	It follows that $d(Q_k)/2> r $. In particular, for any $Q\in\Delta_k$ and $X\in\Gamma_r(Q)$, we have $\delta(X) \leq r < d(Q_k)/2$.
	In other words, $u = u_2$ (see \eqref{def:uonetwo}). Similar to \eqref{eq:secondu}, one can show
	\[ \int_{\Delta_k} S_r^2 u(Q) d\sigma = \int_{\Delta_k} S_r^2 u_2(Q) d\sigma \leq C\gamma^2 \lambda^2 \sigma(\Delta_k). \]
	Therefore
	\begin{align*}
	 \sigma\Big(\big\{Q\in\Delta_k: S_r u(Q) > 2\lambda, Cu(Q) \leq \gamma \lambda\big\}\Big) & \leq \sigma\Big(\big\{Q\in\Delta_k: S_r u(Q) > 2\lambda \big\}\Big) \leq \frac{C \gamma^2 \sigma(\Delta_k)}{4} . 
	\end{align*} 
	This finishes the proof of \eqref{eq:breakpart}.
	
	Summing up \eqref{eq:breakpart} for $\Delta_k$'s such that $\Delta_k\cap \Delta \neq \emptyset$, we obtain 
	\begin{align}
		& \sigma\left(\left\{Q\in \bigcup\limits_{k: \Delta_k \cap \Delta \neq \emptyset} \Delta_k : S_r u(Q) >2\lambda, Cu(Q) \leq \gamma\lambda \right\}\right) \nonumber \\
		& \qquad \qquad \qquad \leq C\gamma^2 \sigma\left(\bigcup\limits_{k} \Delta_k\right) \leq C\gamma^2 \sigma\Big(\big\{Q\in 4\Delta: S^{\overline \alpha}_{4r} u(Q) > \lambda\big\}\Big).\label{eq:sum}
	\end{align}
	The last inequality is because $\{\Delta_k\}$ is a Whitney decomposition of $\{Q\in 4\Delta: S_{4r}^{\overline\alpha} u(Q)>\lambda\}$. It also implies
	\[ \bigcup\limits_{k: \Delta_k\cap \Delta \neq \emptyset} \Delta_k \supset \big\{Q\in\Delta: S_{4r}^{\overline\alpha} u(Q) >\lambda\big\}. \]
	Therefore
	\begin{align}
		& \left\{Q\in \bigcup\limits_{k: \Delta_k \cap \Delta \neq \emptyset} \Delta_k : S_r u(Q) >2\lambda, Cu(Q) \leq \gamma\lambda \right\} \nonumber \\
		& \qquad \qquad \supset \left\{Q\in\Delta: S_{4r}^{\overline\alpha}u >\lambda \text{~and~} S_r u(Q) >2\lambda, Cu(Q) \leq \gamma\lambda \right\} \nonumber \\
		& \qquad \qquad = \big\{Q\in\Delta: S_r u(Q) > 2\lambda, Cu(Q) \leq \gamma\lambda\big\} \label{eq:setcomp}
	\end{align} 
	For the last equality, we use $\overline\alpha>\alpha$ and thus $\big\{ S_r u>2\lambda \big\} \subset \big\{ S_{4r}^{\overline\alpha}u >\lambda \big\}$.
    Combining \eqref{eq:sum} and \eqref{eq:setcomp}, we get
	\begin{align*}
		\sigma\Big(\big\{Q\in\Delta: S_r u > 2\lambda, Cu \leq \gamma\lambda\big\}\Big) \leq C\gamma^2 \sigma\Big(\big\{Q\in 4\Delta: S^{\overline\alpha}_{4r} u(Q) >\lambda\big\}\Big). 
	\end{align*} 
	\end{proof}
	
	By Lemma \ref{lm:goodlambda},
	\begin{align}
		\int_{\Delta} |S_r u|^t d\sigma 
		& = t \int_0^{\infty} \lambda^{t-1} \sigma\Big(\big\{Q\in\Delta: S_r u > \lambda, Cu \leq \gamma\lambda/2 \big\}\Big) d\lambda \nonumber \\
		& \qquad \qquad \quad + t \int_0^{\infty} \lambda^{t-1} \sigma\Big(\big\{Q\in\Delta: S_r u > \lambda, Cu > \gamma \lambda/2\big\}\Big) d\lambda \nonumber \\
		& \leq t \int_0^{\infty} \lambda^{t-1} \cdot C\gamma^2 \sigma\Big(\big\{Q\in 4\Delta: S^{\overline\alpha}_{4r} u >\lambda/2\big\}\Big) d\lambda \nonumber \\
		& \qquad \qquad \quad + t \int_0^{\infty} \lambda^{t-1} \sigma\Big(\big\{Q\in\Delta: Cu > \gamma \lambda/2\big\}\Big) d\lambda \nonumber \\
		& = C\gamma^2 2^t \int_{4\Delta} |S^{\overline\alpha}_{4r}u|^t d\sigma + \left(\frac{2}{\gamma}\right)^{t} \int_{\Delta} |Cu|^t d\sigma \nonumber \\
		& \leq C' \gamma^2 \int_{4\Delta} |S^{\overline\alpha}_{4r}u|^t d\sigma + \left(\frac{2}{\gamma}\right)^{t} |\mathscr{C}(u)|^t \sigma(\Delta). \label{eq:boundq}
	\end{align}
	Here $\mathscr{C}(u) = \sup_{\Delta\subset \pO} \frac{1}{\sigma(\Delta)} \iint_{T(\Delta)} \Carl{u} dX$
	stands for the Carleson measure defined by the function $u$, and by definition $Cu(Q) \leq \Carleson(u)$ for all $Q\in \pO$. 
	Apply Lemma \ref{lemma:diffapper} to the right hand side of \eqref{eq:boundq}, it becomes
	\begin{equation}\label{eq:beforeavrg}
		\int_{\Delta} |S_r u|^t d\sigma \leq C'' \gamma^2 \int_{A\Delta} |S_{Ar} u|^t d\sigma + \left(\frac{2}{\gamma}\right)^{t} |\Carleson(u)|^t \sigma(\Delta),
	\end{equation}
	where $A= 8(\alpha+1)$ is a constant and $A\Delta=\Delta(Q_0,Ar) $. If the radius $r$ is such that $Ar < \diam \Omega$, we can rewrite the above inequality in the following form:
	\begin{equation}
		\fint_{\Delta} |S_r u|^t d\sigma \leq \widetilde C \gamma^2 \fint_{A\Delta} |S_{Ar} u|^t d\sigma + \left(\frac{2}{\gamma}\right)^{t} |\Carleson(u)|^t.
	\end{equation}
    Pick $\gamma$ (depending on $\alpha$) so that $\widetilde C\gamma^2 = 1/4$. Fix such $\gamma$ fixed, denote $C_1 = (2/\gamma)^t $, then
	\begin{equation}\label{ineq:avrg}
		\fint_{\Delta} |S_r u|^t d\sigma \leq \frac 1 4 \fint_{A\Delta} |S_{Ar} u|^t d\sigma + C_1 |\Carleson(u)|^t.
	\end{equation}
	Theorem 6.1 in \cite{HACAD} states the following global estimate 
	\begin{equation}
		\int_{\pO} |Su|^t d\sigma \leq C \int_{\pO} |Cu|^t d\sigma \leq C |\Carleson(u)|^t \sigma(\pO) . \label{eq:globalSC}
	\end{equation} 
	We claim the \textquotedblleft contraction\textquotedblright estimate \eqref{ineq:avrg}, together with the global estimate \eqref{eq:globalSC} 
	implies
	\begin{align}
	   \sup_{\Delta_r\subset \pO} \left(\fint_{\Delta_r} |S_r u|^t d\sigma\right)^{1/t} & \leq C\cdot \Carleson(u)  = C \sup_{\Delta\subset \pO } \frac{1}{\sigma(\Delta)} \iint_{T(\Delta)} \Carl{u} dX.\label{squareCarl}
	\end{align}

	Firstly, for an arbitrary $r>0$, let $k$ be a positive integer such that $A^k r <\diam\Omega \leq A^{k+1} r$, then $\sigma(A^k \Delta) \approx \sigma(\pO) $. (The constants only depends on $A$ and the Ahlfors regularity of $\sigma$. In particular they do not depend on $r$ or $Q_0$.) Apply \eqref{eq:beforeavrg} to $A^k \Delta$, we get
\begin{align}
	\int_{A^k\Delta} |S_{A^k r} u|^t d\sigma & \leq C''' \int_{A^{k+1}\Delta} |S_{A^{k+1}r} u|^t d\sigma + C_1 |\Carleson(u)|^t \sigma(A^k \Delta) \nonumber \\
	& \leq C''' \int_{\pO} |Su|^t d\sigma + \widetilde C_1 |\Carleson(u)|^t \sigma(\pO) \nonumber \\
	& \leq C_2 |\Carleson(u)|^t \sigma(\pO). \qquad \qquad \text{by } \eqref{eq:globalSC} \nonumber
\end{align} 
Hence
\begin{equation}\label{eq:almostglobalSC}
	\fint_{A^k\Delta} |S_{A^k r} u|^t d\sigma \leq C_2 |\Carleson(u)|^t.
\end{equation}

To simplify the notations, we write $a_{r} = \fint_{\Delta} |S_r u|^t d\sigma$ and $B=\max\{C_1, C_2\} \cdot |\Carleson(u)|^t$,
	where $C_1$ and $C_2$ are the constants in \eqref{ineq:avrg} and \eqref{eq:almostglobalSC}.
	Hence \eqref{ineq:avrg} and \eqref{eq:almostglobalSC} become
\begin{equation}\label{simplified}
	a_r \leq \frac{1}{ 4} a_{Ar} + B \qquad \text{if~} Ar<\diam\Omega. 
\end{equation}
\begin{equation}
	a_{A^k r} \leq B \quad \text{where~} A^k r<\diam\Omega \leq A^{k+1}r .
\end{equation}
Induction on \eqref{simplified}, we obtain
\begin{align}
	a_r  \leq \frac{1}{4} a_{Ar} + B \leq \frac{1}{4^k} a_{A^k r} + \left(1 + \frac{1}{ 4} + \cdots + \frac {1}{4^{k-1}}\right) B \leq \frac{7}{3}B. \label{eq:CCLsimplified}
\end{align}
In other words, 
\begin{equation}
	\fint_{\Delta(Q_0,r)} |S_r u|^t d\sigma \leq C|\Carleson(u)|^t, \quad\text{with the constant } C= \max\{C_1, C_2\} \cdot 7/3.
\end{equation}
This holds for arbitrary $Q_0\in\pO$ and $r\in (0,\diam\Omega)$, so \eqref{squareCarl} follows. This finishes the proof of the theorem \ref{tm:squareCarleson}, hence the conclusion \eqref{reverseCarl} follows. 

\section{Acknowledgement}

The author was partially supported by NSF DMS grants 1361823 and 1500098. The author wants to thank her advisor Prof. Tatiana Toro for introducing her to this area and the enormous support during the work on this paper. The author also wants to thank Prof. Hart Smith for his support, and thank the referee for the careful reading and helpful suggestions.

\appendix
\appendixpage

\section{Proof of Lemma \ref{lm:fe}: Properties of $f_{\epsilon}$}\label{sect:feprop}


The function $f_{\epsilon}$ as in \eqref{deffe} is well defined because
\begin{equation}\label{eq:vel}
	\int_{y\in\pO} \ve(x-y)d\sigma(y) \geq \frac{1}{\epsilon^{n-1}} \int_{y\in \Delta(x, \frac{\epsilon}{2})} d\sigma(y) \geq C_1 > 0.
\end{equation} 
We also have
\begin{equation}\label{eq:veu}
	\int_{y\in\pO} \ve(x-y)d\sigma(y) \leq \frac{1}{\epsilon^{n-1}} \int_{y\in \Delta(x, \epsilon)} d\sigma(y) \leq C_2 .
\end{equation} 
 The constants $C_1$ and $C_2$ are independent of $\epsilon$.

\textit{Proof of (1).} For any surface ball $\Delta_0=\Delta(x_0, r_0)$, we denote $\Delta_0^{\epsilon} = \Delta(x_0,r_0+\epsilon)$. Since $f$ is supported in $2\Delta$, each $f_{\epsilon}$ is supported in $\left(2\Delta\right)^{\epsilon}$. Thus all $f_{\epsilon}$'s are supported in $3\Delta$ if $\epsilon< r$, the radius of $\Delta$.

Note that
\begin{align}
	& \left|\int_{\pO} \ve(x-y) d\sigma(y) -\int_{\pO} \ve(\tilde x-y) d\sigma(y) \right| \nonumber \\
	= & \left| \int_{\pO} \int_0^1 \frac{d}{ds} \ve((1-s)\tilde x+s x-y) ds d\sigma(y) \right| \nonumber \\
	\leq & \frac{|x-\tilde x|}{\epsilon^n} \int_0^1 \int_{y\in \pO} \left| \nabla\varphi\left(\dfrac{(1-s)\tilde x+s x-y}{\epsilon} \right) \right| d\sigma(y) ds. \label{eq:lineinteg}
\end{align}
Since $\|\nabla \varphi\|_{L^{\infty}} \leq C$, for any $w\in\mathbb{R}^n$ we have
\begin{equation}\label{eq:lineintegtmp}
	\int_{y\in\pO} \left|\nabla\varphi\left(\frac{w-y}{\epsilon}\right)\right| d\sigma(y) \leq C \sigma \left( B(w,\epsilon)\cap\pO \right) \leq C\epsilon^{n-1}. 
\end{equation} 
Combining \eqref{eq:lineinteg} and \eqref{eq:lineintegtmp},
\[ \left|\int_{\pO} \ve(x-y) d\sigma(y) -\int_{\pO} \ve(\tilde x-y) d\sigma(y) \right| \leq C\frac{|x-\tilde x|}{\epsilon}, \]
so for any $\epsilon$ fixed, the map $x\in\pO \mapsto \int_{\pO} \ve(x-y) d\sigma(y)$ is continuous. Since $0\leq f \leq 1$, we can prove similarly $\int_{\pO} f(y) \ve(x-y) d\sigma(y)$ is also continuous. Thus $f_{\epsilon}(x)$ is continuous.

\textit{Proof of (2).} 
Fix $\epsilon>0$. Let $\widetilde \Delta=\Delta(x_0, r_0)$ be an arbitrary surface ball. Let $\lambda$ be a real number to be determined later. We consider two cases.

\noindent \textit{Case 1.} If $r_0 \geq \epsilon/2$, by the definition \eqref{deffe} and the estimates \eqref{eq:vel}, \eqref{eq:veu},
\begin{align*}
	\int_{\dt} \left| f_{\epsilon}(x) -\lambda\right| d\sigma(x) & \leq \frac{1}{C_1} \int_{\dt}\left| \int_{\pO} f(y) \ve(x-y) d\sigma(y) - \lambda \int_{\pO} \ve(x-y) d\sigma(y) \right| d\sigma(x) \\
	& \leq \widetilde C_1 \int_{x\in \dt} \int_{y\in\Delta(x,\epsilon) } |f(y) -\lambda| \ve(x-y) d\sigma(y) d\sigma(x) \\
	& \leq \widetilde C_1 \int_{y\in \dt^{\epsilon}} |f(y) -\lambda| \int_{x\in\pO} \ve(x-y) d\sigma(x) d\sigma(y) \\
	& \leq \widetilde C_1 C_2 \int_{y\in \dt^{\epsilon}} |f(y) -\lambda| d\sigma(y) \\
	& \leq C' \sigma(\dt^{\epsilon}) \|f\|_{BMO(\sigma)} .
\end{align*}
The last inequality is true if we choose $\lambda = \lambda(\dt, \epsilon) $ be the constant satisfying $\fint_{\dt^{\epsilon}} |f(\cdot) - \lambda| d\sigma \leq \|f\|_{BMO(\sigma)}$.
Thus
\begin{align*}
	\fint_{\dt} \left| f_{\epsilon}(x) -\lambda\right| d\sigma(x) \lesssim \dfrac{\sigma(\dt^{\epsilon})}{\sigma(\dt)} \|f\|_{BMO(\sigma)} \lesssim \left(\frac{r_0+\epsilon}{r_0}\right)^{n-1} \|f\|_{BMO(\sigma)} \lesssim \|f\|_{BMO(\sigma)}.
\end{align*} 

\noindent \textit{Case 2.} If $r_0<\epsilon/2$, by the definition \eqref{deffe} and the estimate \eqref{eq:vel},
\begin{align}
	\int_{\dt} \left| f_{\epsilon}(x) -\lambda\right| d\sigma(x) & \leq \widetilde C_1 \int_{x\in \dt} \int_{y\in\Delta(x,\epsilon) } |f(y) -\lambda| \ve(x-y) d\sigma(y) d\sigma(x) \nonumber \\
	& \leq \widetilde C_1 \int_{y\in \dt^{\epsilon}} |f(y) -\lambda| \int_{x\in \dt } \ve(x-y) d\sigma(x) d\sigma(y). \label{eq:fesmallr}
\end{align}
Note
\[ \int_{x\in \dt } \ve(x-y) d\sigma(x) \leq \frac{1}{\epsilon^{n-1}} \int_{x\in\dt} d\sigma(x)=\frac{\sigma(\dt)}{\epsilon^{n-1}} , \]
it follows from \eqref{eq:fesmallr} that
\begin{align*}
	\fint_{\dt} \left| f_{\epsilon}(x) -\lambda\right| d\sigma(x) & \lesssim \frac{1}{\sigma(\dt)} \cdot \frac{\sigma(\dt)}{\epsilon^{n-1}} \int_{y\in \dt^{\epsilon}} |f(y) -\lambda| d\sigma(y) \\
    & \lesssim	\dfrac{\sigma(\dt+\epsilon)}{\epsilon^{n-1} } \|f\|_{BMO(\sigma)} \\
	& \lesssim \|f\|_{BMO(\sigma)}.
\end{align*}  

We have proved the following: for any $\epsilon$ and any surface ball $\dt$, one can find a constant $\lambda=\lambda(\dt, \epsilon)$ such that $\fint_{\dt} \left| f_{\epsilon}(x) -\lambda\right| d\sigma(x) \leq C \|f\|_{BMO(\sigma)}$. The constant $C$ does not depend on either $\epsilon$ or $\dt$, therefore $\|f_{\epsilon}\|_{BMO(\sigma)} \leq C\|f\|_{BMO(\sigma)}$ for all $\epsilon$.

\textit{Proof of (3).} Fix $x\in\pO$. If $f(x)=0$, then obviously $f(x) \leq \liminf_{\epsilon\to 0} f_{\epsilon}(x)$. For any arbitrary $\lambda>0$ such that $\lambda < f(x)$, there exists $\epsilon_0>0$ such that $f(x) > \lambda + \epsilon_0$. It means
\[ \sup_{\Delta'\ni x} \frac{\sigma(E\cap \Delta')}{\sigma(\Delta')} = M_{\sigma}\chi_E(x) > e^{\frac{\lambda + \epsilon_0 - 1}{\delta}}. \]
In particular, there is some surface ball $\Delta' \ni x$ such that
\[ \frac{\sigma(E\cap \Delta')}{\sigma(\Delta')} > e^{\frac{\lambda + \epsilon_0 - 1}{\delta}}. \]
Then for any point $y\in\Delta'$, we also have $M_{\sigma}\chi_E(y) > \exp{(\lambda+\epsilon_0-1)/\delta}$ and thus $f(y) > \lambda + \epsilon_0$. Consider all $f_{\epsilon}$ with $\epsilon< \dist(x, \pO\setminus \Delta')$, we have $\Delta(x,\epsilon)\subset \Delta'$, hence
\begin{align*}
	f_{\epsilon}(x) = \frac{ \int_{y\in \Delta(x,\epsilon)} f(y) \varphi_{\epsilon} (x-y) d\sigma(y)}{\int_{y\in\Delta(x,\epsilon)} \ve(x-y)d\sigma(y)} \geq (\lambda+\epsilon_0) \frac{ \int_{y\in \Delta(x,\epsilon)} \varphi_{\epsilon} (x-y) d\sigma(y)}{\int_{y\in\Delta(x,\epsilon)} \ve(x-y)d\sigma(y)} = \lambda + \epsilon_0. 
\end{align*}
Therefore $\liminf_{\epsilon\to 0} f_{\epsilon}(x) > \lambda $ for all $\lambda < f(x)$. Thus $\liminf_{\epsilon\rightarrow 0} f_{\epsilon}(x) \geq f(x)$. \qed

\section{Properties of the truncated square function}\label{sect:trsq}
\subsection{Proof of Lemma \ref{lemma:open}}
	Assume $Q\in\pO$ satisfies $S^2_r u(Q) = \iint_{\Gamma_r(Q)} |\nabla u|^2 \delta(X)^{2-n} dX > \lambda^2$ and is finite, then there exists $\eta<r$ such that
	\[ \iint_{\Gamma_r(Q)\setminus B(Q,\eta) } |\nabla u|^2 \delta(X)^{2-n} dX > \left(\frac{ S_r u(Q) +  \lambda}{2}\right)^2. \]
	Fix $\eta$, we claim there exists $\epsilon>0$ such that $S_r u(P)>\lambda$ for any $P\in B(Q,\epsilon\eta)\cap \pO$. In fact,
	\begin{align}
		\left|\iint_{\Gamma_r(Q)\setminus B(Q,\eta) } |\nabla u|^2 \delta(X)^{2-n} dX - \iint_{\Gamma_r(P)\setminus B(P,\eta) } |\nabla u|^2 \delta(X)^{2-n} dX\right| \nonumber \\ \leq
		 \iint_{D} |\nabla u|^2 \delta(X)^{2-n} dX \label{eq:boundbysetdiff}
	\end{align}
	where $D$ is the set difference between $\Gamma_r(Q)\setminus B(Q,\eta) $ and $\Gamma_r(P)\setminus B(P,\eta) $. 
	
	Assume $X\in \Gamma_r(Q)\setminus B(Q,\eta) $, then $|X-Q|\leq \alpha \delta(X)$ and $\eta \leq |X-Q|<r$.	In particular $\eta \leq \alpha \delta(X)$.
	If in addition $X\notin \Gamma_r(P)\setminus B(P,\eta) $ for some $P \in B(Q,\epsilon\eta)$, then $X$ falls in one of the following three categories:
	\begin{itemize}
		\item $|X-P|< \eta$, then $|X-Q|\leq |X-P|+|P-Q| <(1+\epsilon)\eta$, in particular $\eta\leq |X-Q| <(1+\epsilon)\eta $;
		\item $|X-P|\geq r$, then $|X-Q|\geq |X-P|-|P-Q| > r-\epsilon \eta$, in particular $r-\epsilon\eta<|X-Q|<r$;
		\item $|X-P|> \alpha\delta(X)$, then $|X-Q|\geq |X-P|-|P-Q|> (1-\epsilon) \alpha\delta(X)$, in particular $(1-\epsilon) \alpha\delta(X) < |X-Q| \leq \alpha\delta(X)$.
	\end{itemize}
	Similarly, the points in $\left(\Gamma_r(P)\setminus B(P,\eta)\right) \setminus \left(\Gamma_r(Q)\setminus B(Q,\eta) \right)$ also fall in three categories, just with $Q$ replaced by $P$. Therefore $D$, the set difference between $\left(\Gamma_r(Q)\setminus B(Q,\eta)\right) \setminus \left(\Gamma_r(P)\setminus B(P,\eta) \right)$ and $\left(\Gamma_r(P)\setminus B(P,\eta)\right) \setminus \left(\Gamma_r(Q)\setminus B(Q,\eta) \right)$, is contained in the union of three sets (corresponding to the above three cases):	
	\[ V_1 = \big\{X\in\Omega: (1-\epsilon)\eta < |X-Q|<(1+2\epsilon)\eta, ~\delta(X) \geq \eta/\alpha \big\}\]
	\[ V_2 = \big\{X\in\Omega: r-2\epsilon\eta< |X-Q| < r+\epsilon\eta, ~\delta(X) \geq \eta/\alpha \big\}\]
	\[ V_3 = \big\{X\in\Omega:(1-2\epsilon) \alpha\delta(X) < |X-Q| \leq (1+\epsilon)\alpha\delta(X), ~\delta(X) \geq \eta/\alpha \big\}.\]
	 Since $\delta(X) \geq \eta/\alpha$ in $D$,
	\begin{equation}
		\iint_{D} |\nabla u|^2 \delta(X)^{2-n} dX \leq \left(\frac{\alpha}{\eta}\right)^{n} \iint_{V_1\cup V_2 \cup V_3} |\nabla u|^2 \delta(X)^2 dX. \label{eq:setdiff}
	\end{equation}  
	
	Note that $u\in W^{1,2}(\Omega)$, we have
	\[ \iint_{\Omega} |\nabla u|^2 \delta(X)^2 dX \leq \diam(\Omega)^2 \iint_{\Omega} |\nabla u|^2 dX <\infty. \] 
	Hence the integral $ \iint_{V_1\cup V_2 \cup V_3} |\nabla u|^2 \delta(X)^2 dX$ is small as long as the Lebesgue measures of $V_1$, $V_2$ and $V_3$ are small enough. Both $V_1$ and $V_2$ are contained in annuli of radius $3\epsilon\eta$, so their Lebesgue measures are small if we choose $\epsilon$ small enough (depending on $\eta$). Rewrite $V_3$ as
	\[ V_3 = \bigg\{X\in\Omega: \frac{1}{ (1+\epsilon) \alpha}< \frac{\delta(X)}{|X-Q|} \leq \frac{1}{(1-2\epsilon)\alpha}, ~ \delta(X) \geq \frac{\eta}{\alpha}\bigg\}. \]
	Away from $Q$, say in $\Omega \setminus B(Q,\eta/2)$, the function $F(X) = \delta(X)/|X-Q|$ is Lipschitz, and $0\leq F\leq 1$. Choose $\epsilon<1/4$, then for any $X\in V_3$, $|X-Q|\geq (1-2\epsilon)\alpha \delta(X) \geq \eta/2$.
	So $V_3\subset \Omega\setminus B(Q,\eta/2)$ and thus $F$ is Lipschitz on $V_3$. By the coarea formula,
	\begin{equation} \mathcal{H}^n(V_3) =  \int_{\frac{1}{(1+\epsilon)\alpha} }^{\frac{1}{(1-2\epsilon)\alpha}} \int_{F^{-1}(t)} \frac{1}{JF} \chi_{V_3} d\mathcal{H}^{n-1} dt.	\label{eq:Vmeasure}
	\end{equation}
	On the other hand, 
	\begin{align*}
		\int_{0}^1 \int_{F^{-1}(t)} \frac{1}{JF} \chi_{V_3} d\mathcal{H}^{n-1} dt  \leq \int_{0}^1 \int_{F^{-1}(t)} \frac{1}{JF} \chi_{\Omega\setminus B(Q,\eta/2) } d\mathcal{H}^{n-1} dt = \mathcal{H}^n\big(\Omega \setminus B(Q,\eta/2) \big)  
	\end{align*} 
	is finite.
	Therefore by \eqref{eq:Vmeasure}, we may choose $\epsilon$ small enough (depending on $\alpha$) such that $\mathcal{H}^n(V_3)$ is small, which in turn implies $\iint_{V_3} |\nabla u|^2 \delta(X)^2 dX $ is small. To sum up, we have shown that one can choose $\epsilon = \epsilon(\delta,\alpha,\eta,r)$ small enough such that 
	\[ \left(\frac{\alpha}{\eta}\right)^{n} \iint_{V_1\cup V_2 \cup V_3} |\nabla u|^2 \delta(X)^2 dX < \delta < \left(\dfrac{S_r u(Q) + \lambda}{2} \right)^2 -\lambda^2. \]
	Therefore we conclude from \eqref{eq:boundbysetdiff} and \eqref{eq:setdiff} that
	\begin{align*}
		& \iint_{\Gamma_r(P)\setminus B(P,\eta) } |\nabla u|^2 \delta(X)^{2-n} dX \\
		& \qquad\qquad \geq \iint_{\Gamma_r(Q)\setminus B(Q,\eta) } |\nabla u|^2 \delta(X)^{2-n} dX - \iint_{D } |\nabla u|^2 \delta(X)^{2-n} dX \\
		& \qquad\qquad > \left(\frac{ S_r u(Q) +  \lambda}{2}\right)^2 - \delta \\
		& \qquad\qquad >\lambda^2.
	\end{align*}
	Hence $S_r u(P) \geq \left( \iint_{\Gamma_r(P)\setminus B(P,\eta) } |\nabla u|^2 \delta(X)^{2-n} dX \right)^{1/2} >\lambda$,
	for all $P\in B(Q,\epsilon\eta)\cap\pO$. This finishes the proof that $\big\{Q\in\pO: S_r u(Q)>\lambda\big\}$ is open in $\pO$. \qed

\subsection{Proof of Lemma \ref{lemma:diffapper}}
	We prove the estimate by duality: let $r$ be the conjugate of $q/2$, namely $1/r+2/q = 1$, then
	\begin{align}\label{eq:normbyduality}
		\left(\int_{\Delta} |S_r^{\overline\alpha} u(Q) |^q d\sigma(Q)\right)^{2/q} = \sup \left\{\int_{\Delta} |S_r^{\overline\alpha}u(Q)|^2 \psi(Q) d\sigma(Q): \|\psi\|_{L^r(\Delta)} = 1\right\}.
	\end{align}
	Recall $\Delta=\Delta(Q_0,r)$. Extending $\psi$ to all of $\pO$ by setting it to zero outside of $\Delta$.
	For any $X$, let $Q_X\in \pO$ be a boundary point such that $|X-Q_X| = \delta(X)$. By Fubini's theorem,
	\begin{align}
		\int_{\Delta} |S_r^{\overline\alpha}u(Q)|^2 \psi(Q) d\sigma(Q) &  = \int_{\Delta} \left( \iint_{\Gamma^{\overline\alpha}_r(Q)} |\nabla u|^2 \delta(X)^{2-n} dX \right) \psi(Q) d\sigma(Q) \nonumber \\
		& \leq \iint_{B(Q_0,2r)\cap \Omega} |\nabla u|^2 \delta(X)^{2-n} \int_{|Q-Q_X|\leq (\overline\alpha+1)\delta(X)} \psi(Q) d\sigma(Q) dX \nonumber \\
		& \lesssim \iint_{B(Q_0,2r)\cap \Omega} |\nabla u|^2 \delta(X) A_{(\overline\alpha+1)\delta(X)}\psi(Q_X) dX, \label{eq:duality}
	\end{align}
	where $A_{s}\psi(Q)$ is defined as $A_{s}\psi(Q) = \frac{1}{s^{n-1}} \int_{\Delta(Q,s)} \psi(P) d\sigma(P)$.
	Let $\beta>1$, simply calculations show that
	\begin{align*}
		A_{s} \left( A_{\beta s} \psi\right) (Q) & = \frac{1}{s^{n-1}} \int_{\Delta(Q, s)} \left( \frac{1}{(\beta s)^{n-1}} \int_{\Delta(P,\beta s)} \psi(P') d\sigma(P') \right) d\sigma(P) \\
		& \geq \frac{1}{s^{n-1}} \int_{\Delta(Q, s)} \left( \frac{1}{(\beta s)^{n-1}} \int_{\Delta(Q,(\beta-1)s)} \psi(P') d\sigma(P') \right) d\sigma(P) \\
		& \gtrsim \left( \frac{(\beta-1)s}{\beta s}\right)^{n-1} A_{(\beta-1)s} \psi(Q).
	\end{align*}
	Let $s=(\alpha-1) \delta(X)$, $\beta-1=\left(\overline\alpha+1\right)/\left(\alpha-1\right) $, then
	\[ A_{(\overline\alpha+1)\delta(X)}\psi(Q) \lesssim_{\alpha,\overline\alpha} A_{(\alpha-1)\delta(X)}\left(A_{\beta s}\psi\right) (Q) \lesssim A_{(\alpha-1) \delta(X)}M\psi(Q). \]
	For the last inequality, we use $|A_{\beta s}\psi(Q)| \leq C \left(M\psi(Q)\right)$, where $M\psi$ is the Hardy-Littlewood maximal function of $\psi$ with respect to $\sigma$, and the constant $C$ only depend on the Ahlfors regularity of $\sigma$.
	Thus it follows from \eqref{eq:duality} that
	\begin{align} 
	\int_{\Delta} |S_r^{\overline\alpha}u(Q)|^2 \psi(Q) d\sigma(Q) 
	& \lesssim \iint_{B(Q_0,2r)\cap \Omega} |\nabla u|^2 \delta(X) A_{(\alpha-1)\delta(X)}M\psi(Q_X) dX \nonumber \\
	& \lesssim \iint_{B(Q_0,2r)\cap \Omega} |\nabla u|^2 \delta(X)^{2-n} \left( \int_{\Delta(Q_X, (\alpha-1)\delta(X))}M\psi(Q)d\sigma(Q)\right)  dX .\label{eq:squarediffangle}
	\end{align}
	By switching the order of integration,
	we can bound the right hand side by:	
	\begin{align}
	\int_{\Delta} |S_r^{\overline\alpha}u(Q)|^2 \psi(Q) d\sigma(Q) & \lesssim \int_{\Delta(Q_0, 2(\alpha+1)r)} M\psi(Q) \iint_{\Gamma_{2\alpha r} (Q)} |\nabla u|^2\delta(X)^{2-n} dX d\sigma(Q) \nonumber \\
	& = \int_{\Delta(Q_0, 2(\alpha+1)r)} M\psi(Q) |S_{2\alpha r} u(Q)|^2 d\sigma(Q) \nonumber \\
	& \leq \|M\psi\|_{L^r(\Delta(Q_0,2(\alpha+1)r))} \left(\int_{\Delta(Q_0, 2(\alpha+1)r)}|S_{2\alpha r} u(Q)|^q d\sigma(Q)\right)^{2/q}. \label{eq:duality1}
	\end{align}
	Since $1<r<\infty$, we have
	\begin{equation}\label{eq:bdduality}
		\|M\psi\|_{L^r(\Delta(Q_0,2(\alpha+1)r))} \leq C \|\psi\|_{L^r(\Delta(Q_0,2(\alpha+1)r))} = C \|\psi\|_{L^r(\Delta)}= C.
	\end{equation} 
	By \eqref{eq:duality1}, \eqref{eq:bdduality} and the definition \eqref{eq:normbyduality}, we conclude
	\begin{align*}
		\int_{\Delta} |S_r^{\overline\alpha} u(Q) |^q d\sigma(Q) & \leq C\int_{\Delta(Q_0, 2(\alpha+1)r)}|S_{2\alpha r} u(Q)|^q d\sigma(Q) \\
		& \leq C\int_{\Delta(Q_0, 2(\alpha+1)r)}|S_{2(\alpha+1) r} u(Q)|^q d\sigma(Q) .
	\end{align*} 
	This finishes the proof of Lemma \ref{lemma:diffapper}. \qed

\newpage

\Addresses


\begin{thebibliography}{9}

\bibitem[A1]{A1} H. Aikawa, \textit{Norm estimate of Green operator, perturbation of Green function and integrability of superharmonic functions}. Math. Ann. \textbf{312} (1998), 289-318.

\bibitem[A2]{A2} H. Aikawa, \textit{Boundary Harnack principle and Martin boundary
for a uniform domain}, J. Math. Soc. Japan Vol. 53, No. 1, 2001.

\bibitem[A3]{A3} H. Aikawa, \textit{Potential-theoretic characterizations of nonsmooth domains}. Bull. London Math. Soc. \textbf{36} (2004) 469-482.

\bibitem[An]{Ancona} A. Ancona, \textit{On strong barriers and an inequality of Hardy for domains in $\mathbb{R}^n$}. J. London Math. Soc. (2) \textbf{34} (1986), 274–290.

\bibitem[AHMMMTV]{AHMMMTV} J. Azzam, S. Hofmann, J.M. Martell, S. Mayboroda, M. Mourgoglou, X. Tolsa, A. Volberg, \textit{Rectifiability of harmonic measure}. in preprint, arXiv:1509.06294

\bibitem[AHMNT]{AHMNT}
J. Azzam, S. Hofmann, J. M. Martell, K. Nystr\"om, T. Toro, \textit{A new characterization of chord-arc domains}. arXiv:1406.2743, to appear in J. of European Math. Soc.

\bibitem[Ca]{C} L. Carleson, \textit{On the existence of boundary values of harmonic functions of several variables}. Ark. Math. \textbf{4} (1962), 339-393.

\bibitem[CFMS]{CFMS} L. Caffarelli, E. Fabes, S. Mortola, S. Salsa, \textit{Boundary behavior of non-negative solutions of elliptic operators in divergence form}. Indiana Univ. Math. J. \textbf{30} (1981), 621-640.

\bibitem[CR]{logBMO} R. R. Coifman and R. Rochberg, \textit{Another characterization of BMO}. Proc. Amer. Math. Soc. \textbf{79} (1980), 249-254.


\bibitem[D1]{D} B.E.J. Dahlberg, \textit{On estimates for harmonic measure}. Arch. for Rational Mech. and Anal. \textbf{65} (1977), 272-288.

\bibitem[D2]{D2} B.E.J. Dahlberg, \textit{On the absolute continuity of elliptic measures}. Amer. J. Math. \textbf{108} (5): 1119-1138, 1986.

\bibitem[DJ]{DJ} G. David, D. Jerison, \textit{Lipschitz approximation to hypersurfaces, harmonic measure, and singular integrals}. Indiana Univ. Math. J. \textbf{39} (3), 831–845 (1990).

\bibitem[DKP]{DKP} M. Dindos, C. Kenig, J. Pipher, \textit{BMO solvability and the $A_{\infty}$ condition for elliptic operators}.  J. Geom. Anal. \textbf{21} (2011), no. 1, 78–95.

\bibitem[EG]{Evans} L. C. Evans and R. F. Gariepy, \textit{Measure theory and fine properties of functions}. Studies In Advanced Mathematics, 1992.

\bibitem[FKN]{FKN} E. Fabes, C. Kenig, U. Neri, \textit{Carleson measures, $H^1$ duality and weighted BMO in non-smooth domains}. Indiana J. Math. \textbf{30} (4) (1981), 547-581.

\bibitem[FN]{FN} E. Fabes, U. Neri, \textit{Dirichlet problem in Lipschitz domains with BMO data}. Proc. Amer. Math. Soc. \textbf{78} (1980), 33-39.

\bibitem[Fe]{F} R. Fefferman, \textit{A criterion for the absolute continuity of the harmonic measure associated with an elliptic operator}. Journal of the AMS, vol.2, Number 1, 134:65-124, 1989.

\bibitem[FKP]{FKP} 
R. Fefferman, C. Kenig, J. Pipher, \textit{The theory of weights and the Dirichlet problem for elliptic equations}. Annals of Math. Second Series, Vol. 134, No. 1 (Jul. 1991), 65-124.

\bibitem[FN]{FaNe} E. Fabes, U. Neri, \textit{Dirichlet problem in Lipschitz domains with BMO data}. Proc. Amer. Math. Soc. \textbf{78} (1980), 165-186.

\bibitem[FS]{FS} C. Fefferman, E.M. Stein, \textit{$H^p$ spaces of several variables}. Acta Math. \textbf{129} (1972), 137-193.

\bibitem[GT]{GlTr} D. Gilbarg, N. S. Trudinger, \textit{Elliptic partial differential equations of second order}. Springer, 2001. 

\bibitem[HKM]{Finnish} J. Heinonen, T. Kilpel\"ainen, O. Martio, \textit{Nonlinear potential theory of degenerate elliptic equations}. Oxford University Press, New York, 1993.

\bibitem[HL]{HL} S. Hofmann, P. Le, \textit{BMO solvability and absolute continuity of harmonic measure}. arXiv:1607.00418v1

\bibitem[HM]{HM} S. Hofmann, J.M. Martell, \textit{Uniform rectifiability and harmonic measure I: Uniform rectifiability implies Poisson kernels in $L^p$}. Ann. Sci. Ecole Norm. Sup. \textbf{47} (2014), no. 3, 577-654.

\bibitem[HMM]{HMM} S. Hofmann, J.M. Martell, S. Mayboroda, \textit{Uniform rectifiability, Carleson measure estimates, and approximation of harmonic functions}. to appear in Duke Math. J. arXiv:1408.1447.

\bibitem[HMU]{HMU} S. Hofmann, J.M. Martell, I. Uriarte-Tuero, \textit{Uniform rectifiability and harmonic measure II: Poisson kernels in $L^p$ imply uniform rectifiability}. to appear in Duke Math. J. arXiv:1202.3860v2.

\bibitem[HMT]{HMT} S. Hofmann, J.M. Martell, T. Toro, \textit{Elliptic operators on non-smooth domains}, book in preparation.

\bibitem[HW]{HW} R. Hunt, R. Wheeden, \textit{Positive harmonic functions on Lipschitz domains}. Trans. Amer. Math. Soc. \textbf{147} (1970), 507-527.

\bibitem[Jo]{Jones} P.W. Jones, \textit{A geometric localization theorem}. Advances in Mathematics \textbf{46}, 71-79 (1982).

\bibitem[JK]{NTA} D. Jerison, C. Kenig, \textit{Boundary behavior of harmonic functions in non-tangentially accessible domains}. Adv. in Math. \textbf{46} (1982) 80147.


\bibitem[Ke]{CBMS} C. Kenig, \textit{Harmonic analysis techniques for second order elliptic boundary value problems}. CBMS Regional Conference Series in Mathematics, 83. AMS Providence, RI, 1994.

\bibitem[KKiPT]{KKiPT} C. Kenig, B. Kirchheim, J. Pipher, T. Toro, \textit{Square functions and the $A_{\infty}$ property of elliptic measures}. J Geom Anal (2016) \textbf{26}: 2383. doi:10.1007/s12220-015-9630-6.

\bibitem[KKoPT]{KKPT} C. Kenig, H. Koch, J. Pipher, T. Toro, \textit{A new approach to absolute continuity of elliptic measure, with applications to non-symmetric equations}. Adv. Math. \textbf{153} (2000), no. 2, 231-298, doi:10.1006/aima.1999.1899. MR1770930 (2002f:35071)

\bibitem[LSW]{LSW} W. Littman, G. Stampacchia, H.F. Weinberger, \textit{Regular points for elliptic equations with discontinuous coefficients}. Annali della Scuola Normale Superiore di Pisa - Classe di Scienze 17.1-2 (1963): 43-77.

\bibitem[MPT]{HACAD} E. Milakis, J. Pipher, T. Toro, \textit{Harmonic analysis on chord arc domain}. J. Geometric Analysis, (2013), 23, 2091-2157.

\bibitem[Se]{S} S. Semmes, \textit{Analysis vs. geometry on a class of rectifiable hypersurfaces in $\mathbb{R}^n$}. Indiana Univ. Math. J. \textbf{39} (4), 1005–1035 (1990).

\bibitem[St]{Stein} E.M. Stein, \textit{Harmonic analysis: real-variable methods, orthogonality, and oscillatory integrals}. 1st edition, Princeton University Press,  Princeton N.J. 1993.

\end{thebibliography}
\end{document}